\documentclass[a4paper,11pt,leqno]{article}

\usepackage[latin1]{inputenc}
\usepackage[T1]{fontenc} 
\usepackage[english]{babel}
\usepackage{verbatim}
\usepackage{amsfonts}
\usepackage{amsmath}
\usepackage{amsthm}
\usepackage{amssymb}
\usepackage{color}
\usepackage{graphicx}
\usepackage{bm}

\theoremstyle{definition}
\newtheorem{defin}{Definition}[section]

\newtheorem{rem}[defin]{Remark}

\theoremstyle{plane}
\newtheorem{thm}[defin]{Theorem}
\newtheorem{prop}[defin]{Proposition}
\newtheorem{coroll}[defin]{Corollary}
\newtheorem{lemma}[defin]{Lemma}

\newcommand{\mbb}{\mathbb}

\newcommand{\mc}{\mathcal}
\newcommand{\veps}{\varepsilon}
\newcommand{\what}{\widehat}
\newcommand{\wtilde}{\widetilde}
\newcommand{\vphi}{\varphi}
\newcommand{\oline}{\overline}

\newcommand{\ra}{\rightarrow}

\newcommand{\hra}{\hookrightarrow}

\newcommand{\g}{\gamma}

\newcommand{\R}{\mathbb{R}}
\newcommand{\C}{\mathbb{C}}
\newcommand{\N}{\mathbb{N}}
\newcommand{\Z}{\mathbb{Z}}

\renewcommand{\Re}{{\rm Re}\,}

\newcommand{\Id}{{\rm Id}\,}

\allowdisplaybreaks

\def\d{\partial}

\textheight = 700pt
\textwidth = 460pt
\voffset = -70pt
\hoffset = -40pt

\title{\large{\bfseries{\textsc{The well-posedness issue in Sobolev spaces for \\ hyperbolic systems
with Zygmund-type coefficients}}}}

\author{\textsl{Ferruccio Colombini}$\,^1\;$, \textsl{Daniele Del Santo}$\,^2\;$, \textsl{Francesco Fanelli}$\,^3\;$,
\textsl{Guy M\'etivier}$\,^{4}$ \vspace{.5cm} \\
\small{$\,^1\;$ \textsc{Universit\`a di Pisa}} \\ {\small Dipartimento di Matematica} \\
\small{\ttfamily{colombini@dm.unipi.it}} \vspace{0.3cm} \\
%
\small{$\,^2\;$ \textsc{Universit\`a di Trieste}} \\ {\small Dipartimento di Matematica e Geoscienze} \\
\small{\ttfamily{delsanto@units.it}} \vspace{0.3cm} \\
%
\small{$\,^3\;$ \textsc{Universit\'e Paris-Diderot -- Paris 7}} \\
{\small Institut de Math\'ematiques de Jussieu-Paris Rive Gauche UMR 7586} \\
\small{\ttfamily{francesco.fanelli@imj-prg.fr}} \vspace{0.3cm} \\
%
\small{$\,^4\;$ \textsc{Universit\'e de Bordeaux 1}} \\ {\small Institut de Math\'ematiques de Bordeaux UMR 5251} \\
\small{\ttfamily{guy.metivier@math.u-bordeaux1.fr}}}

\vspace{.2cm}
\date\today

\begin{document}

\maketitle

\subsubsection*{Abstract}
{\small In this paper we study the well-posedness of the Cauchy problem for first order
hyperbolic systems with constant multiplicities and with low regularity coefficients depending just on the time variable.
We consider Zygmund and log-Zygmund type assumptions, and we prove well-posedness in $H^\infty$ respectively
without loss and with finite loss of derivatives. The key to obtain the results is the construction of a suitable symmetrizer for our
system, which allows us to recover energy estimates (with or without loss) for the hyperbolic operator
under consideration. This can be achievied, in contrast with the classical case of systems with smooth (say Lipschitz) coefficients,
by adding one step in the diagonalization process, and building the symmetrizer up to the second order.}

\paragraph*{Mathematical Subject Classification (2010):}{\small 35L45 (primary); 35B45, 35B65 (secondary).}

\paragraph*{Keywords:}{\small hyperbolic system with constant multiplicities, Zygmund and log-Zygmund conditions,
microlocal symmetrizability, energy estimates, $H^\infty$ well-posedness.}

\section{Introduction}

The present paper is devoted to the analysis of the well-posedness of the Cauchy problem related to a first order hyperbolic system,
\begin{equation} \label{intro_eq:L}
 Pu(t,x)\,:=\,\d_tu(t,x)\,+\,\sum_{j=1}^n A_j(t,x)\,\d_ju(t,x)\,,
\end{equation}
under low regularity assumptions on its coefficients.
Here, $(t,x)\in[0,T]\times\R^n$ for some fixed time $T>0$ and integer $n\geq1$,
the vectors $u(t,x)$ and $Pu(t,x)$ belong to $\R^m$ for some $m\geq1$, and the $A_j(t,x)$'s are $m\times m$ real-valued matrices, which
will be assumed to be non-Lipschitz with respect to the time variable.

\medbreak
This study represents a natural extension of the investigation about the well-posedness in Sobolev classes for the scalar wave operator
\begin{equation} \label{intro_eq:wave}
 Wu(t,x)\,:=\,\d_t^2u(t,x)\,-\,\sum_{j,k=1}^n\d_j\bigl(a_{jk}(t,x)\,\d_ku(t,x)\bigr)\,,
\end{equation}
under the hypothesis of symmetry, boundedness and strict hyperbolicity.
Namely, this means that $a_{jk}=a_{kj}$ for any $1\leq j,k\leq n$, and that there exist two positive constants
$0<\lambda_0\leq\Lambda_0$ such that, for any $(t,x)\in[0,T]\times\R^n$ and any $\xi\in\R^n$,
$$
\lambda_0\,|\xi|^2\,\leq\,\sum_{j,k=1}^n a_{jk}(t,x)\,\xi_j\,\xi_k\,\leq\,\Lambda_0\,|\xi|^2\,.
$$

In \cite{H-S}, Hurd and Sattinger proved that, if the coefficients  of operator $W$ are Lipschitz continuous in time, and even just
bounded with respect to $x$, then the related Cauchy problem is well-posed in the energy space $H^1\times L^2$. The result extends to
higher regularity Sobolev spaces if more smoothness in space variables is assumed for the $a_{jk}$'s.

After \cite{H-S}, a large number of works have been devoted to recover well-posedness for operators with non-Lipschitz coefficients,
possibly compensating this lack of smoothness with suitable hypotheses with respect to $x$.

In work \cite{C-DG-S}, Colombini, De Giorgi and Spagnolo considered the case $a_{jk}=a_{jk}(t)$, and they introduced an
integral log-Lipschitz assumption: there exists a constant $C>0$ such that, for any $\veps\in\,]0,T]$, one has
\begin{equation} \label{intro_hyp:int-LL}
\int^{T-\veps}_0\left|a_{jk}(t+\veps)\,-\,a_{jk}(t)\right|dt\;\leq\;C\,\veps\,\log\left(1\,+\,\frac{1}{\veps}\right)\,.
\end{equation}
Under this condition, they were able to prove an energy estimate for $W$ with a \emph{fixed loss of derivatives}:
there exists a constant $\delta>0$ such that, for all $s\in\R$, the inequality
\begin{eqnarray}
  &&\sup_{0\le t \le T} \biggl(\|u(t)\|_{H^{s+1-\delta}}\,  +
 \|\partial_t u(t)\|_{H^{s-\delta}}\biggr)\,\leq \label{intro_est:c-loss} \\
&&\qquad\qquad\qquad\qquad\qquad\qquad
\leq\, C_s \left(\|u(0)\|_{H^{s+1}}+
 \|\partial_t u(0)\|_{H^s} + \int_0^{T}  \|  W u(t)\|_{H^{s-\delta}}\, dt\right) \nonumber
\end{eqnarray}
holds true for all $u\in\mc{C}^2([0,T];H^\infty(\R^n))$, for some constant $C_s$ depending only on $s$. In particular,
such an estimate implies the well-posedness of the Cauchy problem for $W$, but just in the space $H^\infty$, due to
the loss of regularity involving $u$.

Let us immediately point out that, under a (stronger) pointwise log-Lipschitz condition, instead,
one can get a loss of derivatives which increases in time: $\delta$ in estimate \eqref{intro_est:c-loss} is replaced by
$\beta\,t$, for some constant $\beta>0$ depending just on the coefficients of $W$.

Moreover, in \cite{Cic-C} Cicognani and Colombini proved, by construction of explicit counterexamples, a classification
of the relation between modulus of continuity of the $a_{jk}$'s and loss of derivatives in the energy estimates. This result in particular
implies the sharpness of estimate \eqref{intro_est:c-loss} (actually, of its time-dependent version under the pointwise hypothesis),
when one just looks at the modulus of continuity of the coefficients.

The recent work \cite{Tar} by Tarama changed the point of view where looking at this problem from. There, again in the case
$a_{jk}=a_{jk}(t)$, the author introduced
conditions on the second variation of the coefficients, rather than on their modulus of continuity (i.e. the first variation).
In particular, he considered integral Zygmund and log-Zygmund conditions, which read (with $\ell=0$ for the former, $\ell=1$ for the latter):
\begin{equation} \label{intro_hyp:int-LZ}
 \int^{T-\veps}_\veps\left|a_{jk}(t+\veps)\,+\,a_{jk}(t-\veps)\,-\,2\, a_{jk}(t)\right|dt\;\leq\;
C\,\veps\,\log^\ell\left(1\,+\,\frac{1}{\veps}\right)\,,
\end{equation}
for some constant $C>0$ and for all $\veps\in\,]0,T/2[\,$. Note that these assumptions are weaker than the respective ones involving
the first difference of the $a_{jk}$'s; on the other hand, they are related (for regular functions) to the second derivative, so they
set in a different context (in particular, the result of \cite{Cic-C} doesn't apply).
Tarama proved an energy estimate with \emph{no loss of derivatives} in any Sobolev spaces $H^s\times H^{s-1}$ in the Zygmund instance,
and an estimate with fixed loss, analogous to \eqref{intro_est:c-loss}, in the log-Zygmund one.

\medbreak
Let us come back to the case of the hyperbolic system $P$ defined by \eqref{intro_eq:L}
and let us assume that $P$ is \emph{strictly hyperbolic} or \emph{hyperbolic with constant multiplicities}:
for the correct definitions, we refer to Section \ref{s:results}. For the time being, it's enough to keep in mind that both these assumptions
imply that the $m\times m$ matrix
$$
\mc{A}(t,x,\xi)\,:=\,\sum_{j=1}^n \xi_j\,A_j(t,x)
$$
is diagonalizable at any point $(t,x,\xi)\in[0,T]\times\R^n\times\R^n$.

As said before, we want to extend the analysis performed for the wave operator $W$.
As a matter of fact, the context is analogous to the one for second order hyperbolic equations: if the
coefficients are Lipschitz continuous both in time and space variables, then the Cauchy problem for $P$ is well-posed
in the energy space $L^2$. For such a result, one can refer to \cite{E}, Chapter 7, or to \cite{M-2008}, Chapter 7, and the references
therein. In the former textbook, the result is proved (actually for symmetric systems) by use of a vanishing viscosity argument;
in the latter, instead, it is proved resorting to techniques coming from paradifferential calculus, in view of applications
to the non-linear case (developed in the following chapter).

As for symmetric systems, it's worth mentioning paper \cite{B-C_1994} by Bahouri and Chemin: there the authors were able to prove that
the phenomenon of loss of derivatives in the energy estimates, that we mentioned above for the wave operator $W$,
occurs also for the special case of transport equations with $L^1_t(LL_x)$ coefficients (where we denoted with
$LL$ the class of log-Lipschitz functions). Moreover, they applied their result to the study of the homogeneous incompressible
Euler equations; we refer to work \cite{D-P} by Danchin and Paicu for another application, in the context of the Boussinesq system.
We also refer to paper \cite{D_2005} by Danchin for further estimates (with and without loss
of derivatives) for transport and transport-diffusion equations in Besov spaces.

This having been said, let us explain the main ideas of the arguments used in \cite{M-2008} (see also Chapters 2 and 3 of the same book)
for the study of general hyperbolic systems with constant multiplicities under Lipschitz regularity hypothesis.
The main issue is to construct a scalar product with respect to which
the matrix symbol $\mc{A}$ is self-adjoint: this can be done by use of the projection operators over the eigenspaces
related to $\mc{A}$.
In fact, hyperbolicity with constant multiplicities implies microlocal symmetrizability,
in the sense of M\'etivier (see \cite{M-2008}, Chapter 7).
\begin{defin} \label{d:micro_symm}
 System \eqref{intro_eq:L} is \emph{microlocal symmetrizable} if there exists a $m\times m$ matrix $S(t,x,\xi)$,
 homogeneous of degree $0$ in $\xi$, such that:
 \begin{itemize}
  \item $\xi\,\mapsto\,S(t,x,\xi)$ is $\mc{C}^\infty$ for $\xi\neq0$;
  \item $(t,x)\,\mapsto\,S(t,x,\xi)$ is $W^{1,\infty}$ for $(t,x)\in[0,T]\times\R^n$;
  \item for any point $(t,x,\xi)$, the matrix $S(t,x,\xi)$ is self-adjoint;
  \item there exists a constant $\lambda>0$ such that $S(t,x,\xi)\,\geq\,\lambda\,\Id$ for any $(t,x,\xi)$;
  \item for any point $(t,x,\xi)$, the matrix $S(t,x,\xi)\,\mc{A}(t,x,\xi)$ is self-adjoint.
 \end{itemize}
The matrix valued function $S$ is called a \emph{symmetrizer} for system \eqref{intro_eq:L}.
\end{defin}

Once a symmetrizer for $P$ is found, then, roughly (the dependence on $x$ of the $A_j$'s makes things technically more complicated),
one can define the energy as the $L^2$ norm of $u$ with respect to this scalar product. Thanks to Lipschitz
regularity assumptions, differentiation in time and Gronwall's lemma easily allow one to find energy estimates.

\medbreak
In the present paper, we will deal with just time-dependent matrices $A_j$,
under the condition of hyperbolicity with constant multiplicities.
We will assume Zygmund and log-Zygmund type hypothesis, in the same spirit of those considered by Tarama in
paper \cite{Tar}, even if, in general, the pure integral one, i.e. \eqref{intro_hyp:int-LZ}, will be precluded to us.

In the Zygmund instance, we will prove energy estimates with no loss of derivatives for $P$, which imply the well-posedness
of the Cauchy problem in $H^s$ for any $s\in\R$. In the log-Zygmund case, instead,
we will show energy estimates with time-depedent loss of derivatives, which are still suitable to recover the well-posedness
of the Cauchy problem, but only in the space $H^\infty$.

In order to prove the results, we will combine the technique for systems we have just explained, with
ideas coming from the analysis of second order equations \eqref{intro_eq:wave}, which actually go back to paper \cite{C-DG-S}.
So, first of all we regularize the coefficients by convolution with a smoothing kernel. Then,
we pass to the phase space by Fourier transform and, at any point $(t,\xi)$, we construct a symmetrizer for the
approximated system, which will be smooth in time. Nevertheless, due to the low regularity assumptions
on the coefficients, we need to introduce a \emph{second step} in the diagonalization process, and add a lower order term to our symmetrizer.
On the one hand, thanks to the second term, we can compensate the bad behaviour of the time derivative (in the energy estimates) of the
principal part of the symmetrizer. On the other hand, linking the approximation parameter with the dual variable
(following the original idea of \cite{C-DG-S}) we are able to control the loss coming from the time derivative of the second
part (which, we recall, is of lower order).

Let us spend a few words on how constructing the symmetrizer. As we mentioned before, in the smooth case, if we denote by $\Pi_j$
the projection operators over the eigenspace $E_j$, the symmetrizer $S$ is defined by $S\,:=\,\sum_j\Pi_j^*\,\Pi_j$.
By analogy, in our context the principal part $S^0$ of the symmetrizer will be defined by a similar formula,
but we will need to introduce a ``suitable'' self-adjoint operator $\Sigma^0_j$ acting on the corresponding eigenspace.
Then, we will add a lower order term $S^1$: for each $j$, it will introduce ``suitable'' corrections in the eigenspace $E_j$, coming
from the other eigenspaces $E_k$ for $k\neq j$, via operators $\Sigma^1_{jk}:E_k\ra E_j$
such that $\bigl(\Sigma^1_{jk}\bigr)^*=\Sigma^1_{kj}$. In formula, we will have
$$
S^0\,:=\,\sum_{j}\Pi_j^*\,\Sigma^0_j\,\Pi_j\qquad\mbox{ and }\qquad
S^1\,:=\,\sum_j\sum_{k\neq j}\Pi_j^*\Sigma^1_{jk}\,\Pi_k\,.
$$
In both last two sentences above, ``suitable'' has to be read in function of the cancellations we want to produce in the energy estimates.
The first and main step is to find the $\Sigma^0_j$'s: we will reconduct this issue to the problem of solving a system of ODEs in low
regularity Zygmund classes. Notice that, in particular, we will be below the regularity required by classical existence theory for ODEs.
Nonetheless, by use of tools from Littlewood-Paley theory, we will be able to find \emph{approximate} solutions to our ODE system,
up to a (smooth) remainder which can be easily controlled in terms of the energy.
Once the $\Sigma^0_j$'s are built, a simple algebraic relation allow us to find also the $\Sigma^1_{jk}$'s.
We point out here that the constructed $S^0$ will have the same Zygmund regularity of the $A_j$'s, since the $\Sigma^0_j$'s will,
while $S^1$ will have one degree less of smoothness, because its definition will involve one time derivative of the original coeffcients.

Let us remark that adding a lower order term in the symmetrizer can be compared with the choice of Tarama,
in \cite{Tar}, of modifing the definition of the classical energy associated to a wave operator $W$
by introducing a lower order part. Nonetheless, we can transform the wave equation \eqref{intro_eq:wave}
into a system and apply the machinery we have just explained (see also Section \ref{s:wave}):
then, the ``system-energy'' doesn't coincide completely
with the original energy defined by Tarama, even if the result one can obtain is the same.
We note here that, in the particular case of systems coming from an equation, the integral condition \eqref{intro_hyp:int-LZ}
is enough to find the result (see also Remarks \ref{r:p} and \ref{r:p=1}).

At this point, it's interesting to notice also the analogy of our construction of the symmetrizer with the
two-steps diagonalization performed in \cite{C-DS-R} by Colombini, Del Santo and Reissig, still in dealing with the
wave operator $W$.

Let us conclude the introduction by pointing out that Zygmund classes can be characterized as special (possibly logarithmic) Besov spaces:
as mentioned before about the construction of the $\Sigma^0_j$'s, in our analysis
we will largely exploit Littlewood-Paley theory and ``logarithmic paradifferential calculus'' (see Section \ref{s:tools}).

\medbreak
Before going on, let us give a brief overview of the paper.

In the next section, we will give the basic definitions and we will state the main results, namely energy estimates with and without
loss and well-posedness of the Cauchy problem for $L$ in suitable Sobolev spaces.

In Section \ref{s:tools} we will introduce the tools, mainly from Fourier Analysis, we need in our study.
In particular, we will recall the basic points of the Littlewood-Paley theory, extending the classical construction to
logarithmic behaviours.

Section \ref{s:proof} is devoted to the proof of the statements. In particular, we will detail the construction
of a symmetrizer for our system and the computations in order to get energy estimates.

Finally, in Section \ref{s:wave} we will give a concrete example to illustrate our technique.
We will come back to the wave equation \eqref{intro_eq:wave}, we will transform it into a system and we will perform the analysis
we developed in the previous sections. At the end, we will recover the same results Tarama proved in \cite{Tar}, showing
however a slightly different proof.

In the Appendix we will postpone the proofs of some technical results.

\subsection*{Notations}
Before going on, let us introduce some notations.

First of all, given two vectors $v$ and $w$ in $\C^m$, we will denote by $v\cdot w$ the usual scalar product in $\C^m$ and
by $|v|$ the usual norm of a vector in $\C^m$:
$$
v\,\cdot\,w\,=\,\sum_{j=1}^m v_j\,\oline{w_j}\qquad\mbox{ and }\qquad
|v|^2\,=\,v\,\cdot\,v\,.
$$

On the contrary, given a infinite-dimensional Banach space $X$, we will denote by $\|\,\cdot\,\|_{X}$ its norm and,
if it's Hilbert, by $(\,\cdot\,,\,\cdot\,)_{X}$ its scalar product. Tipically, for us $X=L^2(\R^n;\R^m)$ or $H^s(\R^n;\R^m)$.

We will also set $\mc{M}_m(\R)$ the set of all $m\times m$ matrices whose components are real numbers, and we will denote
by $|\,\cdot\,|_{\mc{M}}$ its norm:
$$
|A|_{\mc{M}}\,:=\,\sup_{|v|=1}|Av|\,\equiv\,\sup_{|v|\leq1}|Av|\,\equiv\,\sup_{v\neq0}\frac{|Av|}{|v|}\,.
$$
With standard notations, we will denote by $D\,=\,{\rm diag}\left(d_1\ldots d_m\right)$ the diagonal matrix having as elements the
numbers $d_j$, and with $A\,=\,\left(v_1\,|\,\ldots\,|\,v_m\right)$ the matrix having $v_j$ as $j$-th column vector. We finally
set $B\,=\,\left(w_1\,-\,\dots\,-\,w_m\right)$ the matrix having $^tw_k$ as $k$-th line vector.

\section{Basic definitions and main results} \label{s:results}

For $m\geq1$, let us consider the $m\times m$ linear first order system
\begin{equation} \label{def:Lu}
Lu(t,x)\,=\,\d_tu(t,x)\,+\,\sum_{j=1}^nA_j(t)\,\d_ju(t,x)
\end{equation}
defined on a strip $[0,T]\times\R^n$, for some time $T>0$ and $n\geq1$.
We suppose $u(t,x)\in\R^m$ and, for all $1\leq j\leq n$, the matrices $A_j(t)\in\mc{M}_m(\R)$.

We define the symbol $A$ associated to the operator $L$: for all $(t,\xi)\in[0,T]\times\R^n$,
\begin{equation} \label{def:symbol}
A(t,\xi)\,:=\,\sum_{j=1}^n\xi_j\,A_j(t)\,.
\end{equation}
Then, for all $(t,\xi)$, $A(t,\xi)$ is an $m\times m$ matrix which has real-valued coefficients.
We denote by $\bigl(\lambda_j(t,\xi)\bigr)_{1\leq j\leq m}\subset\C$ its eigenvalues at any point $(t,\xi)$.

\begin{rem} \label{r:homog}
Note that $A(t,\xi)$ is homogeneous of degree $1$ in $\xi$, and this property is inherited by the eigenvalues.
As a matter of facts, for any $\g>0$,
$$
A(t,\xi)v(t,\xi)\,=\,\lambda(t,\xi) v(t,\xi)\qquad\Longrightarrow\qquad A(t,\g\xi)v(t,\xi)\,=\,\g\lambda(t,\xi)v(t,\xi)\,;
$$
this relation shows in particular that
$$
v(t,\g\xi)\,=\,v(t,\xi)\qquad\mbox{ and }\qquad \lambda(t,\g\xi)\,=\,\g\lambda(t,\xi)\,.
$$
\end{rem}

Let us introduce the following definitions (see e.g. \cite{M-2008}, Chapter 2).
\begin{defin} \label{d:systems}
\begin{itemize}
\item[(i)] We say that system \eqref{def:Lu} is \emph{strictly hyperbolic} if, for all $t\in[0,T]$ and all $\xi\neq0$,
the eigenvalues of $A(t,\xi)$ are all real and distinct:
$$
\bigl(\lambda_j\bigr)_{1\leq j\leq m}\,\subset\,\R\qquad\qquad\mbox{ and }\qquad\qquad
\lambda_j\,\neq\,\lambda_k\quad\mbox{ for }\;j\neq k\,.
$$

\item[(ii)] System \eqref{def:Lu} is said instead to be \emph{hyperbolic with constant multiplicities} if, for all $t\in[0,T]$ and all
$\xi\neq0$, the eigenvalues of $A(t,\xi)$ are real and semi-simple, with constant multiplicities.
\end{itemize}
\end{defin}

We recall that a (possibly complex) eigenvalue is called \emph{semi-simple} if its algebraic and geometric multiplicities coincide;
a matrix is semi-simple if it is diagonalizable in the complex sense. Then, assuming the system
to be hyperbolic with constant multiplicities means that $A(t,\xi)$ is diagonalizable at any point $(t,\xi)$, its eigenvalues
are real and their multiplicities don't change in $t$ nor in $\xi$.

\medbreak
We will always assume our system to be hyperbolic with constant multiplicities. Let us note that, in particular, under this hypothesis we
have $\bigl(\lambda_j(t,\xi)\bigr)_{1\leq j\leq m}\subset\R$.

Let us turn our attention to the coefficients of $L$. In the whole paper, we will suppose that, for all $1\leq j\leq n$,
the matrix-valued functions $A_j$ belong to $L^\infty$:
\begin{equation} \label{hyp:bound}
\bigl\|A_j\bigr\|_{L^\infty([0,T];\mc{M}_m(\R))}\,:=\,\sup_{[0,T]}\bigl|A_j(t)\bigr|_{\mc{M}}\,\leq\,K_0\,.
\end{equation}

In a first time, let us assume that they satisfy a Zygmund regularity condition: there exist a $p\in[1,+\infty]$ and a constant $K_z>0$ such
that, for all $1\leq j \leq n$ and all $0<\tau<T/2$,
\begin{equation} \label{hyp:Z}
\bigl\|A_j(\,\cdot\,+\tau)\,+\,A_j(\,\cdot\,-\tau)\,-\,2\,A_j(\,\cdot\,)\bigr\|_{L^p([\tau,T-\tau];\mc{M}_m(\R))}\,\leq\,K_z\,\tau\,.
\end{equation}
Note that this condition tells us that each component of the matrices $A_j$ verifies the same integral Zygmund condition (as
real-valued functions on $[0,T]$).

Let us point out that, as we will see in Section \ref{s:tools}, if $p>1$ then condition \eqref{hyp:Z}
already implies the boundedness property \eqref{hyp:bound}.

Under this hypothesis, it's possible to prove an energy estimate with no loss of derivatives for our operator $L$.

\begin{thm} \label{th:en_Z}
Let us consider the first-order system \eqref{def:Lu}, and let us assume it to be hyperbolic with constant multiplicities.
Suppose moreover that the coefficients $\bigl(A_j\bigr)_{1\leq j\leq n}$ satisfy
the Zygmund condition \eqref{hyp:Z}, for some $p\in\,]1,+\infty]$.

Then, for all $s\in\R$, there exist positive constants $C_1$, $C_2$ (just depending on $s$
and on $K_z$) such that the estimate
\begin{equation} \label{est:u_Z}
\sup_{t\in[0,T]}\|u(t)\|_{H^s}\,\leq\,C_1\,e^{C_2\,T}\left(\|u(0)\|_{H^s}\,+\,\int^T_0\bigl\|Lu(\tau)\bigr\|_{H^s}\,d\tau\right)
\end{equation}
holds true for any $u\in\mc{C}^1([0,T];H^\infty(\R^n;\R^m))$.
\end{thm}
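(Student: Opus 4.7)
The approach I would follow combines the microlocal symmetrization strategy for hyperbolic systems with constant multiplicities with the frequency-dependent regularization trick of Colombini--De~Giorgi--Spagnolo. After Fourier transform in $x$, the equation $Lu=f$ becomes, for each $\xi\in\R^n$, the vector ODE
$$
\d_t\widehat u(t,\xi)\,+\,i\,A(t,\xi)\,\widehat u(t,\xi)\,=\,\widehat f(t,\xi)\,.
$$
Localizing dyadically in $\xi$ on shells $|\xi|\sim 2^\nu$, I would replace $A$ by a time-mollified version $A_\veps$ with $\veps\sim 2^{-\nu}$, so that $\veps|\xi|\sim 1$ on the shell. The Zygmund hypothesis \eqref{hyp:Z} (with $p>1$) then yields the standard estimates $\|A-A_\veps\|_{L^\infty}\lesssim\veps$, $\|\d_tA_\veps\|_{L^\infty}\lesssim\log(1/\veps)$ and $\|\d_t^2A_\veps\|_{L^\infty}\lesssim 1/\veps$. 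In particular, on the shell $|\xi|\sim 1/\veps$ the first derivative costs only a logarithm, while the second derivative produces a factor comparable to the hyperbolic operator itself.

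For each $(t,\xi)$ I would then use the constant multiplicity assumption to pick smooth spectral projectors $\Pi_j^\veps(t,\xi)$ onto the eigenspaces $E_j(t,\xi)$ of $A_\veps(t,\xi)$, and build a two-layer symmetrizer as sketched in the introduction:
$$
S\,=\,S^0\,+\,S^1,\qquad S^0=\sum_j(\Pi_j^\veps)^*\Sigma_j^0\,\Pi_j^\veps,\qquad S^1=\sum_{j\neq k}(\Pi_j^\veps)^*\Sigma^1_{jk}\,\Pi_k^\veps\,.
$$
By construction $S^0 A_\veps$ is self-adjoint. The operators $\Sigma_j^0$, acting on $E_j$, are chosen as (approximate) solutions of an ODE tailored so that the ``diagonal'' part of $\d_t S^0$ cancels in the energy computation below; once they are in place, the off-diagonal correctors $\Sigma^1_{jk}\colon E_k\to E_j$ are determined by a purely algebraic identity, with $(\Sigma^1_{jk})^*=\Sigma^1_{kj}$ guaranteeing self-adjointness of $S^1$. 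Since $S^1$ involves one extra time derivative of the coefficients and $\|\Sigma^1_{jk}\|\lesssim\log(1/\veps)$, for $|\xi|$ large enough $S^0+S^1$ still satisfies a uniform lower bound $S\ge(\lambda/2)\,\Id$ on the positive spectrum of $S^0$.

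With this symmetrizer I would set $E_\nu(t)=(S(t,\xi)\,\widehat u_\nu(t,\xi),\widehat u_\nu(t,\xi))_{L^2}$ for each dyadic block $u_\nu$ and differentiate in time. Using the ODE for $\widehat u_\nu$, self-adjointness of $S^0 A_\veps$, and the two cancellation identities encoded in $\Sigma_j^0$ and $\Sigma^1_{jk}$, the time derivative $\dot E_\nu(t)$ collapses to $2\Re(S\widehat f_\nu,\widehat u_\nu)_{L^2}$ plus a remainder of size $\|u_\nu\|_{L^2}^2$. The only a priori dangerous contribution, coming from $\d_t S^1$, is of order $1/\veps\sim 2^\nu$, but it is always paired with $A-A_\veps$ which is of order $\veps|\xi|\sim 1$, or with commutator terms of matching order; this balance is exactly what the coupling $\veps\sim 1/|\xi|$ is designed to produce. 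Gronwall's lemma then gives $\|u_\nu(t)\|_{L^2}\le C(\|u_\nu(0)\|_{L^2}+\int_0^t\|Lu_\nu(\tau)\|_{L^2}\,d\tau)$ uniformly in $\nu$, and weighting by $2^{2\nu s}$ and summing yields \eqref{est:u_Z}.

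The main obstacle is the construction of the principal blocks $\Sigma_j^0$: the ODE they are supposed to satisfy has only Zygmund-regular coefficients, strictly below the Lipschitz threshold of the classical Cauchy--Lipschitz theory. I would therefore not produce them as genuine solutions but as Littlewood--Paley approximate solutions, built block by block using the logarithmic paradifferential calculus of Section \ref{s:tools}, so that the residual in the ODE is smooth enough to be absorbed into the Gronwall remainder above, while self-adjointness and positive-definiteness are preserved in the limit. A secondary technical point is the role of the assumption $p>1$ in \eqref{hyp:Z}, which is needed to upgrade the integral Zygmund condition to pointwise control of $A_j$ and of its regularizations, underpinning all the derivative bounds used throughout the argument.
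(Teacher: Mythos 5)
Your proposal follows essentially the same strategy as the paper: pass to the Fourier variable, mollify in time with $\veps\sim 1/|\xi|$, build a two-layer microlocal symmetrizer $S=S^0+|\xi|^{-1}S^1$ whose principal blocks $\Sigma_j^0$ are approximate solutions of an ODE in Zygmund--Besov classes (Proposition~\ref{p:ode_system}) and whose subprincipal blocks $\Sigma^1_{jk}$ are then determined algebraically, and close with Gronwall. Two technical points, however, deserve care and are handled differently in the paper. First, mollifying the symbol $A$ \emph{directly} does not preserve the constant-multiplicity structure: a semi-simple eigenvalue of multiplicity $m_h>1$ can split under arbitrarily small perturbations, after which the individual eigenprojectors $\Pi_j^\veps$ you invoke need not be uniformly bounded. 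The paper avoids this by regularizing the eigenvalues $\Lambda$ and the eigenvector matrix $P$ \emph{separately} and setting $A_\veps:=P_\veps\Lambda_\veps P_\veps^{-1}$ (see \eqref{eq:A_eps}), so that $A_\veps$ has by construction the same spectral structure as $A$, with the same multiplicities. Second, the pointwise size of the subprincipal block is not $\log(1/\veps)$: by Remark~\ref{r:point} one has $|\d_t a_\veps|\lesssim\veps^{-(1-\sigma)}$ for some $0<\sigma<1-1/p$, so that $\Sigma^1_{jk}\sim\veps^{-(1-\sigma)}$ pointwise; it is the explicit prefactor $|\xi|^{-1}$ in $S_\veps=S^0_\veps+|\xi|^{-1}S^1_\veps$, together with $\veps=1/|\xi|$, that makes the correction of size $\veps^\sigma\to0$, whence uniform positivity of $S_\veps$ for $|\xi|\geq R_0$. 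The logarithmic gain (or, in the Zygmund case, its absence) enters instead at the level of the $L^p$-in-time bounds of Proposition~\ref{p:z-approx} and H\"older's inequality in the Gronwall exponent, not in the pointwise control of $S^1$. With these corrections your argument matches the paper's proof.
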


\begin{rem} \label{r:p}
We point out that in the previous statement, as well as in Theorem \ref{th:en_LZ} below, in general $p$ has to be strictly greater
than $1$. We will be clearer about this point in Subsection \ref{ss:symm}, where we will construct a symmetrizer for $L$.

However, as we will see in Section \ref{s:wave}, in the particular case of systems coming from a second order scalar equation,
the weakest condition $p=1$, combined with the additional $L^\infty$ assumption \eqref{hyp:bound},
is still suitable to recover energy estimates.
\end{rem}

From the previous result, it immediately follows the well-posedness issue of the Cauchy problem related to operator $L$.
\begin{thm} \label{th:cauchy_Z}
Under the hypothesis of Theorem \ref{th:en_Z}, for any $s\in\R$ the Cauchy problem for $L$,
$$
\left\{\begin{array}{l}
        Lu\;=\;f \\[1ex]
	u_{|t=0}\;=\;u_0\,,
       \end{array}\right. \leqno{(C\!P)}
$$
is well-posed in $H^s(\R^n;\R^m)$, globally on $[0,T]$.

In particular, $(C\!P)$ is well-posed in the space $H^\infty$ with no loss of derivatives.
\end{thm}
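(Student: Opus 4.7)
The plan is standard: the \emph{a priori} estimate \eqref{est:u_Z} of Theorem \ref{th:en_Z} already encodes uniqueness and continuous dependence, and existence is recovered by a duality argument based on the same estimate applied to the formal adjoint $L^*$. Indeed, if $u_1, u_2 \in \mc{C}^1([0,T]; H^\infty)$ both solve $(C\!P)$ for the same data $(u_0, f)$, then $v := u_1 - u_2$ satisfies $Lv = 0$ with $v|_{t=0} = 0$, and \eqref{est:u_Z} forces $v \equiv 0$; similarly, $\sup_t \|u\|_{H^s}$ is Lipschitz with respect to $\|u_0\|_{H^s}$ and $\|Lu\|_{L^1_T H^s}$.

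For existence, I would pass to $L^* = -\d_t - \sum_j A_j(t)^T\, \d_j$: its symbol $-\sum_j \xi_j A_j^T(t)$ is, up to a sign, the transpose of $A(t,\xi)$ and therefore has real eigenvalues with the same multiplicity structure, while the Zygmund bound \eqref{hyp:Z} and the $L^\infty$ bound \eqref{hyp:bound} are manifestly preserved under transposition. Theorem \ref{th:en_Z}, applied after the time reflection $t \mapsto T-t$, thus yields
\[
\sup_{t \in [0,T]} \|\vphi(t)\|_{H^{-s}}\, \leq\, C\,e^{CT}\left(\|\vphi(T)\|_{H^{-s}}\,+\,\int_0^T \|L^*\vphi(\tau)\|_{H^{-s}}\, d\tau\right)
\]
for every $\vphi \in \mc{C}^1([0,T]; H^\infty)$. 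On the subspace $\mc{F} := \{L^*\vphi : \vphi \in \mc{C}^1([0,T]; H^\infty),\; \vphi(T) = 0\}$ of $L^1([0,T]; H^{-s})$, I define
\[
\ell(L^*\vphi)\, :=\, \int_0^T \langle f(\tau), \vphi(\tau)\rangle_{H^s, H^{-s}}\, d\tau\, +\, \langle u_0, \vphi(0)\rangle_{H^s, H^{-s}}.
\]
The dual estimate guarantees both that $\ell$ is well-defined on $\mc{F}$ (since $L^*\vphi = 0$ together with $\vphi(T) = 0$ forces $\vphi \equiv 0$) and that $|\ell(L^*\vphi)| \leq C'(\|u_0\|_{H^s} + \|f\|_{L^1_T H^s})\,\|L^*\vphi\|_{L^1_T H^{-s}}$. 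Hahn--Banach then extends $\ell$ to a continuous functional on the whole of $L^1([0,T]; H^{-s})$, which by duality corresponds to some $u \in L^\infty([0,T]; H^s)$ solving $Lu = f$ with $u|_{t=0} = u_0$ in the distributional sense (the attainment of $u_0$ being read off by testing against $\vphi$ with prescribed $\vphi(0)$).

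Reading the equation, $\d_tu = f - \sum_j A_j\, \d_ju$ is bounded in $L^\infty([0,T]; H^{s-1})$, so $u \in \mc{C}([0,T]; H^{s-1})$; combining this with $u \in L^\infty([0,T]; H^s)$ through a standard weak-continuity and density argument (or by smoothly approximating $(u_0, f)$ and invoking \eqref{est:u_Z} at levels $s$ and $s+1$) upgrades the regularity to $u \in \mc{C}([0,T]; H^s)$. The $H^\infty$ assertion then follows by running the argument at every $s \in \R$. The main technical subtlety I foresee is the Hahn--Banach step, specifically verifying the well-definedness of $\ell$ on $\mc{F}$; this ultimately reduces to uniqueness for $L^*$ with terminal datum at $t = T$, which is precisely what the dual estimate provides. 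A more constructive route via mollification of the $A_j$'s would seem attractive but is awkward here, since convolving matrices in $t$ need not preserve the constant-multiplicity structure of the symbol $\mc{A}(t,\xi)$, so the duality argument is the cleaner way to proceed.
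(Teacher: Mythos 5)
Your argument is correct, and it fills in a step the paper leaves entirely implicit: Theorem \ref{th:cauchy_Z} is stated as an immediate corollary of the a priori estimate \eqref{est:u_Z}, with no written proof, and the duality scheme you describe (uniqueness and continuous dependence from \eqref{est:u_Z}; existence via the backward adjoint problem, the dual estimate for $L^*$, and Hahn--Banach in $L^1([0,T];H^{-s})$) is a standard, legitimate way of making ``immediate'' precise. Two remarks. First, since the $A_j$ depend only on $t$, there is an even more elementary route, which matches how the paper actually proves \eqref{est:u_Z}: after Fourier transform in $x$ the problem decouples, for each $\xi$, into a linear ODE $\d_t\what{u}+i\,A(t,\xi)\what{u}=\what{f}$ with bounded measurable coefficients, solvable by Carath\'eodory theory; the frequency-wise bound \eqref{est:en_part} then yields $u\in\mc{C}([0,T];H^s)$ directly, with no Hahn--Banach step and no separate argument for the attainment of the datum. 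The same observation ($x$-independence of the coefficients, hence spatial mollification commutes with $L$) is what you need anyway to extend \eqref{est:u_Z} from $\mc{C}^1([0,T];H^\infty)$ functions to the weak solutions you construct, so that uniqueness and continuous dependence hold in the class $\mc{C}([0,T];H^s)$ and not only among smooth solutions --- your uniqueness paragraph, as written, covers only the latter. Second, your closing worry about mollifying the $A_j$'s is reasonable but is not why the paper avoids it: in Section \ref{s:proof} the authors regularize the spectral data ($\Lambda$ and $P$, see \eqref{eq:A_eps}) rather than the $A_j$'s themselves, precisely so that constant multiplicities are preserved; in any case that regularization serves the proof of the estimate, not the production of solutions.
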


Now, let us consider the weaker log-Zygmund condition: there are a $p\in[1,+\infty]$ and a positive constant $K_{\ell z}$
such that, for all $1\leq j \leq n$ and all $0<\tau<T/2$, one has
\begin{equation} \label{hyp:LZ}
\bigl\|A_j(\,\cdot\,+\tau)\,+\,A_j(\,\cdot\,-\tau)\,-\,2\,A_j(\,\cdot\,)\bigr\|_{L^p([\tau,T-\tau];\mc{M}_m(\R))}\,\leq\,
K_{\ell z}\,\tau\,\log\left(1+\frac{1}{\tau}\right)\,.
\end{equation}
Again this condition tells us that each component of the matrices $A_j$ verifies the same integral log-Zygmund condition.

Under this lower regularity assumption, we are able to prove an energy estimate with a finite loss of derivatives.
\begin{thm} \label{th:en_LZ}
Let us consider the first-order system \eqref{def:Lu}, and assume it to be hyperbolic with constant multiplicities.
Suppose moreover that the coefficients $\bigl(A_j\bigr)_{1\leq j\leq n}$ satisfy
the log-Zygmund condition \eqref{hyp:LZ}, for some $p\in\,]1,+\infty]$.

Then, for all $s\in\R$, there exist a positive constants $C_1$, $C_2$ (depending on $s$
and on $K_{\ell z}$) and a $\wtilde{\beta}>0$ (depending just on 
$K_{\ell z}$) such that, setting
$$
\beta(t)\,:=\,\wtilde{\beta}\,t^\gamma\,,\qquad\qquad\mbox{ with }\qquad\g\,=\,\frac{1}{p'}\,=\,1\,-\,\frac{1}{p}\,,
$$
then the estimate
\begin{equation} \label{est:u_LZ}
\sup_{t\in[0,T]}\|u(t)\|_{H^{s-\beta(t)}}\,\leq\,C_1\,e^{C_2\,T}\,\left(\|u(0)\|_{H^s}\,+\,\int^T_0
\bigl\|Lu(\tau)\bigr\|_{H^{s-\beta(T)+\beta(T-\tau)}}\,d\tau\right)
\end{equation}
holds true for any $u\in\mc{C}^1\bigl([0,T];H^\infty(\R^n;\R^m)\bigr)$.
\end{thm}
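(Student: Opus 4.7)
The plan is to adapt the strategy behind Theorem \ref{th:en_Z} to the log-Zygmund setting, where the extra logarithmic factor in \eqref{hyp:LZ} forces us to introduce a time-dependent loss of derivatives. First I would mollify the coefficients $A_j$ by convolution in time, coupling the mollification parameter $\veps_\nu = 2^{-\nu}$ to the dyadic frequency $2^\nu$ of a Littlewood--Paley decomposition, following the Colombini--De Giorgi--Spagnolo mechanism. The key quantitative bounds on the mollified $A_{j,\nu}$ that are inherited from \eqref{hyp:LZ} take the shape $\|A_j - A_{j,\nu}\|_{L^p} \lesssim 2^{-\nu}\log(1+\nu)$, $\|\d_t A_{j,\nu}\|_{L^p} \lesssim \log(1+\nu)$, and $\|\d_t^2 A_{j,\nu}\|_{L^p} \lesssim 2^{\nu}\log(1+\nu)$, differing from the Zygmund case precisely by the extra $\log(1+\nu)$.

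Next, on each dyadic block at frequency $2^\nu$ I would construct the symmetrizer $S_\nu(t,\xi) = S_\nu^0 + S_\nu^1$ for the mollified symbol $A_\nu(t,\xi) = \sum_j \xi_j A_{j,\nu}(t)$, using the two-step diagonalization already set up in Subsection \ref{ss:symm} for Theorem \ref{th:en_Z}. The principal part $S_\nu^0 = \sum_j \Pi_{j,\nu}^* \Sigma_j^{0,\nu} \Pi_{j,\nu}$ is self-adjoint and bounded below by $\lambda\,\Id$, while the lower-order correction $S_\nu^1 = \sum_{j\neq k}\Pi_{j,\nu}^* \Sigma_{jk}^{1,\nu}\Pi_{k,\nu}$ is designed so that $\d_t S_\nu^0$ plus the skew part of $S_\nu^0 A_\nu$ is compensated by the commutator $[S_\nu^1, A_\nu]$. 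The time-$L^p$ bounds on $\d_t S_\nu^0$ and $\d_t S_\nu^1$ inherit the $\log(1+\nu)$ degradation noted above.

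I would then define a weighted, microlocalized energy
\begin{equation*}
E(t)\,:=\,\sum_{\nu\geq -1} 2^{2(s-\beta(t))\nu}\,\bigl(S_\nu(t,D)\,\Delta_\nu u(t),\,\Delta_\nu u(t)\bigr)_{L^2}\,,
\end{equation*}
with $\beta(t)=\wtilde\beta\,t^\g$ and $\g=1/p'$ as in the statement. Differentiating in $t$ produces four contributions: the forcing $(S_\nu\Delta_\nu u,\Delta_\nu Lu)$, the paradifferential commutator $[A,\Delta_\nu]u$ (a remainder of order $0$), the algebraic cancellations built into $S_\nu^1$, and two genuinely ``bad'' pieces coming from $\d_t S_\nu^0$ and $\d_t S_\nu^1$. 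Crucially, differentiating the weight $2^{-2\beta(t)\nu}$ yields the negative definite contribution $-2(\log 2)\,\beta'(t)\,\nu\,E(t)$, which is the mechanism intended to absorb the losses.

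The main obstacle, and exactly what pins down the exponent $\g = 1/p'$, is to close the estimate against the $\d_t S_\nu$ terms using only $L^p$ control in time. After applying Cauchy--Schwarz and summing over $\nu$, one is led to integrals of the type $\int_0^t \|\d_t S_\nu(\tau)\|_{L^\infty_\xi}\,e_\nu(\tau)\,d\tau$; splitting via H\"older with exponents $p$ and $p'$, the bound $\|\d_t S_\nu\|_{L^p(0,t)} \lesssim \log(1+\nu)$ produces a prefactor $t^{1/p'}\log(1+\nu)$, which is exactly dominated by the gain $\wtilde\beta\,\g\,t^{\g-1}\,\nu$ coming from differentiating the weight, provided $\g=1/p'$ and $\wtilde\beta$ is taken large enough in terms of $K_{\ell z}$. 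A Gr\"onwall argument on the resulting differential inequality for $E(t)$, combined with the coercivity $S_\nu\geq \lambda\,\Id$, then yields \eqref{est:u_LZ}; the non-standard index $s - \beta(T) + \beta(T-\tau)$ appearing on the right-hand side simply records how the loss budget is spent by the Duhamel contribution of the source at time $\tau$.
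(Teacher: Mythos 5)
Your overall architecture is the one the paper uses: mollification of the coefficients at a scale tied to the frequency (the paper takes $\veps=|\xi|^{-1}$ and works directly on $\what{u}(t,\xi)$, which for $x$-independent coefficients amounts to the same thing as your dyadic choice $\veps_\nu=2^{-\nu}$), the two-term symmetrizer $S^0_\veps+|\xi|^{-1}S^1_\veps$ of Subsection \ref{ss:symm}, Gronwall, and H\"older in time with exponents $(p,p')$ producing the factor $t^{1/p'}$. The genuine problem is in your key quantitative bounds. Under \eqref{hyp:LZ}, Proposition \ref{p:z-approx} gives $\|A_j-A_{j,\veps}\|_{L^p}\lesssim\veps\,\log(1+1/\veps)$, $\|\d_tA_{j,\veps}\|_{L^p}\lesssim\log^{2}(1+1/\veps)$ and $\|\d_t^2A_{j,\veps}\|_{L^p}\lesssim\veps^{-1}\log(1+1/\veps)$; with $\veps_\nu=2^{-\nu}$ the extra factor is $\log(1+2^\nu)\sim\nu$, \emph{not} $\log(1+\nu)$. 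As written, your bounds are not consequences of \eqref{hyp:LZ} (they fail already for scalar log-Zygmund functions), and if they were true they would yield only a logarithmic loss, i.e.\ a much stronger conclusion than \eqref{est:u_LZ}. With the correct bounds, the two bad terms at frequency $2^\nu$, namely $2^\nu\|A_j-A_{j,\nu}\|_{L^p}$ and $2^{-\nu}\|\d_tS^1_\nu\|_{L^p}$ (the latter governed by the second-derivative estimate, since $S^1_\nu$ already contains one time derivative of the coefficients), are each of size $\nu$ in $L^p_t$, and it is precisely this linear-in-$\nu$ size which forces the finite loss $\beta(t)=\wtilde{\beta}\,t^{1/p'}$.

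A second, related issue is the absorption mechanism you describe. Since $\d_tS^1_\nu$ and $A-A_\nu$ are controlled only in $L^p$ in time, the negative contribution $-2(\log2)\,\beta'(t)\,\nu\,E(t)$ obtained by differentiating the weight cannot dominate them pointwise in $t$ (at a given time the bad coefficient may be arbitrarily large), and after summing over $\nu$ the bad coefficient is $\nu$-dependent, so no single Gronwall factor for $E(t)$ can be extracted. The argument closes by working at fixed frequency, as the paper does: Gronwall on $e_\nu:=\bigl(S_\nu\Delta_\nu u\cdot\Delta_\nu u\bigr)^{1/2}$ without any time-dependent weight, then H\"older, $\int_0^t\bigl(2^{-\nu}\,|\d_tS^1_\nu|_{\mc M}+|A-A_\nu|_{\mc M}\bigr)d\tau\leq C\,t^{1/p'}\,\nu$, giving $e_\nu(t)\lesssim e^{Ct}\,2^{\wtilde{\beta}t^{\g}\nu}\bigl(e_\nu(0)+\int_0^t2^{\wtilde{\beta}(t-\tau)^{\g}\nu}\,\|\Delta_\nu Lu(\tau)\|_{L^2}\,d\tau\bigr)$; one then multiplies by $2^{(s-\beta(t))\nu}$, uses the concavity of $t\mapsto t^{\g}$ to bound $\beta(t-\tau)-\beta(t)$ by $\beta(T-\tau)-\beta(T)$, and sums in $\ell^2$, which is exactly how the index $s-\beta(T)+\beta(T-\tau)$ arises and why the estimate is not iterable in time (Remarks \ref{r:th_loss} and \ref{r:loss}). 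Two smaller points: for $x$-independent coefficients $A(t,D)$ commutes with $\Delta_\nu$, so there is no paradifferential commutator to handle; and the symmetrizer is built so that $S^0_\nu A_\nu$ is \emph{exactly} self-adjoint ($\Sigma^0$ is block-diagonal and commutes with $\Lambda$), $S^1_\nu$ compensating only $\d_tS^0_\nu$ up to the bounded remainder $R_\nu$ produced by solving the relevant ODEs only approximately in the Zygmund classes -- the step where the restriction $p>1$ enters.
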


Let us stress again the fact that, in general, we do have $p>1$. Moreover, in this instance
condition \eqref{hyp:LZ} is still enough to recover the boundedness of the coefficients \eqref{hyp:bound}.

\begin{rem} \label{r:th_loss}
 The loss in the right-hand side of \eqref{est:u_LZ} is due to the $L^p$ hypothesis with logarithmic behaviour \eqref{hyp:LZ},
and it appears a little bit strange, also because it is not suitable for iterations in time, apart from the case
$p=+\infty$ (see Theorem \ref{th:en_LZ_inf} below).

We will come back later on this question (see Remark \ref{r:loss}) when proving the previous estimate.
For the moment, let us note that, if related to the case of scalar second order hyperbolic equations, such a loss represents the
intermediate instance between the fixed loss under an integral log-Zygmund condition (see paper \cite{Tar}) and the time-dependent
one for its pointwise counterpart (see e.g. \cite{C-DS} and \cite{C-DS-F-M_tl}).
\end{rem}

As noticed in the previous remark, in the case $p=+\infty$ one can recover a linear in time loss of derivatives in
the energy estimates. Such an estimate is suitable for iterations in time.

\begin{thm} \label{th:en_LZ_inf}
 Let us consider the first-order system \eqref{def:Lu}, and assume it to be hyperbolic with constant multiplicities.
Suppose moreover that the coefficients $\bigl(A_j\bigr)_{1\leq j\leq n}$ satisfy
the log-Zygmund condition \eqref{hyp:LZ} for $p=+\infty$.

Then, for all $s\in\R$, there exist positive constants $C_1$, $C_2$ (depending on $s$
and on $K_{\ell z}$) and a $\wtilde{\beta}>0$ (depending just on 
$K_{\ell z}$) such that the estimate
\begin{equation} \label{est:u_LZ_inf}
\sup_{t\in[0,T]}\|u(t)\|_{H^{s-\wtilde{\beta}t}}\,\leq\,C_1\,e^{C_2\,T}\,\left(\|u(0)\|_{H^s}\,+\,\int^T_0
\bigl\|Lu(\tau)\bigr\|_{H^{s-\wtilde{\beta}\tau}}\,d\tau\right)
\end{equation}
holds true for any $u\in\mc{C}^1\bigl([0,T];H^\infty(\R^n;\R^m)\bigr)$.
\end{thm}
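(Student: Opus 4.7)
The plan is to deduce Theorem \ref{th:en_LZ_inf} as the specialization of Theorem \ref{th:en_LZ} to the endpoint $p=+\infty$. For such a value one has $\g=1/p'=1$, so the loss function becomes $\beta(t)=\wtilde\beta\,t$, strictly linear in $t$. The key observation is that, thanks to this linearity,
$$
-\,\beta(T)\,+\,\beta(T-\tau)\;=\;-\,\wtilde\beta\,T\,+\,\wtilde\beta\,(T-\tau)\;=\;-\,\wtilde\beta\,\tau\,,
$$
so the Sobolev exponent $s-\beta(T)+\beta(T-\tau)$ appearing on the right-hand side of \eqref{est:u_LZ} reduces precisely to $s-\wtilde\beta\,\tau$. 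Substituting into \eqref{est:u_LZ} therefore yields \eqref{est:u_LZ_inf} verbatim. So, granted Theorem \ref{th:en_LZ} for the full range $p\in(1,+\infty]$, no further work is required, and Theorem \ref{th:en_LZ_inf} is really a reformulation emphasizing the additive structure of the loss.

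Alternatively, one may run the construction of Section \ref{s:proof} directly for $p=\infty$. I would regularize the coefficients by $A_{j,\veps}:=A_j\ast\rho_\veps$ in time, build the two-step symmetrizer $S_\veps=S^0_\veps+S^1_\veps$ on the Fourier side as outlined in the introduction, and link the mollification scale to the dyadic frequency via $\veps=2^{-\nu}$. Under the log-Zygmund hypothesis \eqref{hyp:LZ} with $p=\infty$, a standard computation gives the \emph{pointwise in $t$} bound
$$
\bigl|\d_t S_\veps(t,\xi)\bigr|_{\mc M}\;\lesssim\;\log\!\bigl(1+|\xi|\bigr)\,,
$$
uniformly in $t\in[0,T]$. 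This is the technical advantage of the endpoint: no H\"older step in time is needed when estimating $\int_0^t|\d_t S_\veps|\,d\tau$, so the loss accumulates strictly linearly with $t$.

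With this in hand I would define a weighted dyadic energy of the form
$$
E(t)\;:=\;\sum_{\nu}2^{2\nu(s-\wtilde\beta\, t)}\,\bigl(S_{2^{-\nu}}(t,D)\,\Delta_\nu u(t)\,,\,\Delta_\nu u(t)\bigr)_{L^2}\,,
$$
where $\Delta_\nu$ are the Littlewood--Paley blocks associated to a suitable dyadic partition of unity. Differentiating in $t$, the dangerous contribution coming from $\d_t S_\veps$ at frequency $2^\nu$ is bounded by $C\,\nu\log 2\cdot 2^{2\nu(s-\wtilde\beta t)}\|\Delta_\nu u\|_{L^2}^2$, which is exactly absorbed by the contribution $-\,2\wtilde\beta\log 2\cdot\nu\cdot E(t)$ coming from differentiating the weight, provided $\wtilde\beta$ is taken large enough depending only on $K_{\ell z}$. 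A Gronwall argument then delivers \eqref{est:u_LZ_inf}.

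The main obstacle, common to both Theorem \ref{th:en_LZ} and Theorem \ref{th:en_LZ_inf}, remains the construction of $S^0_\veps+S^1_\veps$ and the control of the lower-order remainders and paradifferential commutators, which is handled in Section \ref{s:proof}. What is specific to Theorem \ref{th:en_LZ_inf}, and what the separate statement is meant to highlight, is the \emph{iterability} of the estimate: since $\beta$ is additive in $t$, applying \eqref{est:u_LZ_inf} first on $[0,T_1]$ and then on $[T_1,T]$ and splicing the two pieces recovers \eqref{est:u_LZ_inf} on $[0,T]$ with the same structural form, a property which fails for the concave loss $t^\g$ with $\g<1$ that appears in Theorem \ref{th:en_LZ}.
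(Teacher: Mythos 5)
Your main argument is correct and is essentially the paper's own route: the paper proves a single pointwise estimate $|\what{u}(t,\xi)|\leq C e^{Ct}\bigl(|\xi|^{\wtilde{\beta}t^{\g}}|\what{u}(0,\xi)|+\int_0^t|\xi|^{\wtilde{\beta}(t-\tau)^{\g}}|\what{Lu}(\tau,\xi)|\,d\tau\bigr)$ valid for all $p\in\,]1,+\infty]$, and Theorem \ref{th:en_LZ_inf} is precisely the case $p=+\infty$, $\g=1$, where the identity $-\beta(T)+\beta(T-\tau)=-\wtilde{\beta}\,\tau$ you exhibit turns \eqref{est:u_LZ} into \eqref{est:u_LZ_inf}. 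Your alternative dyadic-energy sketch is a legitimate variant but unnecessary; the specialization argument already matches the paper.
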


From Theorem \ref{th:en_LZ}, it immediately follows the well-posedness issue of the Cauchy problem
related to the operator $L$, but just in the space $H^\infty$, due to the finite loss of derivatives.
\begin{thm} \label{th:cauchy_LZ}
Under the hypothesis of Theorem \ref{th:en_LZ}, the Cauchy problem $(C\!P)$ for $L$
is well-posed in the space $H^\infty(\R^n;\R^m)$ with a finite loss of derivatives.
\end{thm}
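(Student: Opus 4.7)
The plan is to derive Theorem \ref{th:cauchy_LZ} from Theorem \ref{th:en_LZ} by the standard scheme: uniqueness is read off the energy estimate directly; existence follows by regularizing the coefficients in time, solving the resulting smooth problem and passing to the limit under the uniform estimate; and the $H^\infty$ statement with finite loss of derivatives then follows from the fact that $\beta(T) = \wtilde{\beta}\,T^\g$ is a finite constant independent of $s$.

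For uniqueness, let $u_1, u_2 \in \mc{C}^1([0,T]; H^\infty(\R^n;\R^m))$ be two solutions of $(C\!P)$ with the same data. The difference $w := u_1 - u_2$ lies in the same space, satisfies $Lw = 0$, and has $w(0) = 0$; applying \eqref{est:u_LZ} at any $s \in \R$ yields $\|w(t)\|_{H^{s-\beta(t)}} = 0$ for every $t \in [0,T]$, so $w \equiv 0$. Continuous dependence of the solution on the data then follows from the same estimate combined with linearity.

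For existence with data $(u_0, f) \in H^\infty \times \mc{C}([0,T]; H^\infty)$, I would mollify the coefficients in the time variable by convolution with a nonnegative kernel $\rho_\veps$, obtaining smooth matrices $A_j^\veps$. Classical hyperbolic theory applies to $L^\veps u^\veps = f$, $u^\veps(0) = u_0$, producing a unique $u^\veps \in \mc{C}^1([0,T]; H^\infty)$. Two points have to be verified at this stage. First, the log-Zygmund bound \eqref{hyp:LZ} must pass to the $A_j^\veps$ with the same constant $K_{\ell z}$: this is a standard property of convolutions with nonnegative kernels of unit mass. Second, and more delicately, the smoothed operator $L^\veps$ should inherit the hyperbolicity-with-constant-multiplicities structure uniformly in $\veps$, so that estimate \eqref{est:u_LZ} applies with constants independent of $\veps$; for $\veps$ small enough this follows from a perturbation argument for the spectrum of $\sum_j \xi_j A_j^\veps$, using the fact that the spectral gap of the original symbol on the unit sphere in $\xi$ is uniformly bounded below over $[0,T]$ by compactness. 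Granted these two points, $\{u^\veps\}$ is uniformly bounded in $L^\infty([0,T]; H^{s-\beta(T)})$ for each $s$, and $\d_t u^\veps$ is uniformly bounded in a lower-regularity space via the equation itself; a weak-$*$ compactness argument then delivers, up to extraction, a limit $u$ solving $Lu = f$ with $u(0) = u_0$, and the estimate transfers to $u$ by lower semicontinuity of the norm.

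The $H^\infty$ conclusion is then immediate: given any $\sigma \in \R$, applying \eqref{est:u_LZ} with $s = \sigma + \beta(T)$ places $u$ in $\mc{C}([0,T]; H^\sigma)$, and taking $\sigma$ arbitrarily large shows $u(t) \in H^\infty$ uniformly for $t \in [0,T]$, with the loss bounded by the finite quantity $\beta(T) = \wtilde{\beta}\,T^\g$. The main obstacle in this scheme is the second point of the existence argument, namely the uniform control on the spectral structure of the regularized symbol; if one prefers to bypass it, an equivalent route is to rerun the symmetrizer construction of the preceding section directly on the smoothed coefficients, since that construction uses only the diagonalizability of the original symbol and the log-Zygmund control, both of which are preserved by mollification.
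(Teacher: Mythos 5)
The paper itself gives no explicit proof of this theorem; it declares it an ``immediate'' consequence of Theorem \ref{th:en_LZ}. Your uniqueness argument and the final $H^\infty$ step are correct and essentially forced. The issue lies in your existence argument, and it is a genuine one.

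Your first route --- mollify the $A_j$'s in time, solve $L^\veps u^\veps=f$ by classical theory, invoke \eqref{est:u_LZ} for $L^\veps$ uniformly in $\veps$, and pass to the limit --- has a gap precisely where you suspected. Mollifying the $A_j$ directly need not preserve hyperbolicity with constant multiplicities. The spectral-gap perturbation argument you invoke only prevents eigenvalues belonging to different clusters from merging; it does not prevent a multiple eigenvalue of $A(t,\xi)$ from \emph{splitting} under averaging. Concretely, if $A_1(t)=R(t)\,{\rm diag}(1,2,2)\,R(t)^{-1}$ with $R(t)$ a rotation in the first two coordinates, then the convolution $\rho_\veps * A_1$ has three distinct eigenvalues for every $\veps>0$, with two of them separated by a gap that shrinks to zero as $\veps\to0$. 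The regularized symbol then is, at best, strictly hyperbolic with a vanishing spectral gap, and since the second-order part $\Sigma^1$ of the symmetrizer in \eqref{eq:Sigma_1} involves $(\lambda_j-\lambda_k)^{-2}$, the constants in the energy estimate for $L^\veps$ degenerate as $\veps\to0$. So this route does not deliver a uniform bound on $u^\veps$.

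Your second route (``rerun the symmetrizer construction directly on the smoothed coefficients'') is the right instinct, but as phrased it suffers from the same defect, since diagonalizability with constant multiplicities is exactly what is \emph{not} inherited by $\rho_\veps*A_j$. What the paper actually does in Subsection \ref{ss:symm} is the crucial twist you need: it diagonalizes the \emph{original} symbol $A(t,\xi)=P\,\Lambda\,P^{-1}$ first, mollifies $\Lambda$ and $P$ separately, and only then reassembles $A_\veps:=P_\veps\,\Lambda_\veps\,P_\veps^{-1}$ as in \eqref{eq:A_eps}. That $A_\veps$ is, by construction, hyperbolic with the same constant multiplicities for every $\veps$, and the energy estimate holds for it with uniform constants. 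The price is that $A_\veps(t,\xi)$ is only homogeneous of degree one in $\xi$, not a polynomial in $\xi$, so $L_\veps$ is a Fourier multiplier rather than a differential operator --- but that costs nothing here. Indeed, since the coefficients are $x$-independent, the entire existence problem decouples in the Fourier variable: for each fixed $\xi$ one solves the linear ODE
\[
\d_t\what{u}(t,\xi)\,+\,i\,A(t,\xi)\,\what{u}(t,\xi)\,=\,\what{f}(t,\xi)\,,\qquad \what{u}(0,\xi)\,=\,\what{u}_0(\xi)\,,
\]
whose coefficient $A(\,\cdot\,,\xi)\in L^\infty([0,T];\mc{M}_m(\C))$ by \eqref{hyp:bound} and Corollary \ref{c:z-bound}, so a unique absolutely continuous solution exists by Carath\'eodory's theorem. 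The estimate \eqref{est:d_tE} (after the choice $\veps=1/|\xi|$) applies directly to this $\what{u}$ and gives the membership $u\in\mc{C}([0,T];H^{s-\beta(t)})$ for $H^s$ data, with no need to mollify the equation at all. This is both simpler and avoids the structural issue entirely, and is almost certainly what the authors intend by ``immediately follows.''
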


\section{Tools} \label{s:tools}

We collect here some notions and results which turn out to be useful in our proof. 

In a first time we will recall some basic facts on Littlewood-Paley theory. For the sake of completeness we will work on the
general instance of $\R^d$, with $d\geq1$.

Then, for reasons which will be clear in Subsection \ref{ss:Zygmund}, we will need to introduce the class of ``logarithmic Besov spaces''
and to develop paradifferential calculus in this framework. It must be said, however, that this study involves no
special difficulties, and it can be performed as in the classical case.

This having been done, we will focus on the instance $d=1$: we will introduce the Zygmund classes and we will provide a full
characterization for them by use of the previous tools.
Fundamental results about solving ODEs in Zygmund spaces will close this section.

\subsection{Littlewood-Paley theory} \label{ss:L-P}

Let us first define the so called ``Littlewood-Paley decomposition'', based on a non-homogeneous dyadic partition of unity with
respect to the Fourier variable. We refer to \cite{B-C-D} (Chapter 2), paper \cite{Bony} and \cite{M-2008} (Chapters 4 and 5)
for the details.

So, fix a smooth radial function
$\chi$ supported in the ball $B(0,2),$ 
equal to $1$ in a neighborhood of $B(0,1)$
and such that $r\mapsto\chi(r\,e)$ is nonincreasing
over $\R_+$ for all unitary vectors $e\in\R^d$. Set
$\varphi\left(\xi\right)=\chi\left(\xi\right)-\chi\left(2\xi\right)$ and
$\vphi_j(\xi):=\vphi(2^{-j}\xi)$ for all $j\geq0$.

The dyadic blocks $(\Delta_j)_{j\in\Z}$
 are defined by\footnote{Throughout we agree  that  $f(D)$ stands for 
the pseudo-differential operator $u\mapsto\mc{F}^{-1}(f\,\mc{F}u)$.} 
$$
\Delta_j:=0\ \hbox{ if }\ j\leq-1,\quad\Delta_{0}:=\chi(D)\quad\hbox{and}\quad
\Delta_j:=\varphi(2^{-j}D)\ \text{ if }\  j\geq1.
$$
We  also introduce the following low frequency cut-off:
$$
S_ju\,:=\,\chi(2^{-j}D)\,u\,=\,\sum_{k\leq j}\Delta_{k}u\quad\text{for}\quad j\geq0.
$$
Throughout the paper we will use freely the following classical property:
for any $u\in\mc{S}',$ the equality $u=\sum_{j}\Delta_ju$ holds true in $\mc{S}'$.

Let us also mention the so-called \emph{Bernstein's inequalities}, which explain
the way derivatives act on spectrally localized functions.
  \begin{lemma} \label{l:bern}
Let  $0<r<R$.   A
constant $C$ exists so that, for any nonnegative integer $k$, any couple $(p,q)$ 
in $[1,+\infty]^2$ with  $p\leq q$ 
and any function $u\in L^p$,  we  have, for all $\lambda>0$,
$$
\displaylines{
{\rm supp}\, \widehat u \subset   B(0,\lambda R)\quad
\Longrightarrow\quad
\|\nabla^k u\|_{L^q}\, \leq\,
 C^{k+1}\,\lambda^{k+d\left(\frac{1}{p}-\frac{1}{q}\right)}\,\|u\|_{L^p}\;;\cr
{\rm supp}\, \widehat u \subset \{\xi\in\R^d\,|\, r\lambda\leq|\xi|\leq R\lambda\}
\quad\Longrightarrow\quad C^{-k-1}\,\lambda^k\|u\|_{L^p}\,
\leq\,
\|\nabla^k u\|_{L^p}\,
\leq\,
C^{k+1} \, \lambda^k\|u\|_{L^p}\,.
}$$
\end{lemma}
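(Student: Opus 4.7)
The plan is to prove both inequalities by reducing $u$ (resp.\ $\nabla^k u$) to a convolution with a rescaled Schwartz kernel and invoking Young's inequality, the only real work being to extract the geometric-in-$k$ constants.

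First I would treat the Bernstein inequality itself. Fix a smooth cut-off $\theta\in\mathcal{C}^\infty_c(\R^d)$ with $\theta\equiv 1$ on $B(0,R)$ and $\mathrm{supp}\,\theta\subset B(0,2R)$, and set $K:=\mathcal{F}^{-1}\theta$ together with its rescaling $K_\lambda(x):=\lambda^d K(\lambda x)$, so that $\widehat{K_\lambda}(\xi)=\theta(\xi/\lambda)$. Because $\mathrm{supp}\,\widehat u\subset B(0,\lambda R)$, one has $u=K_\lambda*u$, hence $\nabla^k u=(\nabla^k K_\lambda)*u$. Young's inequality with exponent $r$ defined by $1/r=1+1/q-1/p$ gives
$$
\|\nabla^k u\|_{L^q}\,\leq\,\|\nabla^k K_\lambda\|_{L^r}\,\|u\|_{L^p},
$$
and a direct change of variable yields $\|\nabla^k K_\lambda\|_{L^r}=\lambda^{k+d(1/p-1/q)}\|\nabla^k K\|_{L^r}$. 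So it remains only to bound $\|\nabla^k K\|_{L^r}$ by $C^{k+1}$ with $C$ independent of $k$.

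For that bound I would use the standard weighted-$L^\infty$ estimate $\|f\|_{L^r}\leq C\,\|(1+|x|)^{d+1}f\|_{L^\infty}$ applied to $f=\nabla^k K$, and convert it to a Fourier-side statement via the identity $x^\beta\mathcal{F}^{-1}g=c_\beta\,\mathcal{F}^{-1}(\partial^\beta g)$ coupled with Hausdorff--Young. This reduces to controlling $\|\partial^\beta(\xi^\alpha \theta)\|_{L^1}$ for all $|\beta|\leq d+1$ and any fixed $\alpha$ with $|\alpha|=k$; by Leibniz,
$$
\partial^\beta(\xi^\alpha\theta)\,=\,\sum_{\gamma\leq\beta}\binom{\beta}{\gamma}\partial^\gamma(\xi^\alpha)\,\partial^{\beta-\gamma}\theta,
$$
and on $\mathrm{supp}\,\theta$ one has $|\partial^\gamma(\xi^\alpha)|\leq k^{|\gamma|}(2R)^{k-|\gamma|}$, which produces a bound of the form $C^{k+1}\,k^{d+1}$. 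The polynomial factor $k^{d+1}$ is absorbed by enlarging $C$ slightly, giving the claimed geometric constant. The upper bound in the annulus case is just Step 1 with $p=q$.

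For the lower bound in the annulus case, choose $\widetilde\chi\in\mathcal{C}^\infty_c(\R^d\setminus\{0\})$ equal to $1$ on $\{r\leq|\xi|\leq R\}$. The algebraic identity $|\eta|^{2k}=\sum_{|\alpha|=k}\binom{k}{\alpha}\eta^{2\alpha}$ yields the decomposition
$$
\widetilde\chi(\eta)\,=\,\sum_{|\alpha|=k} m_\alpha(\eta)\,\eta^\alpha,\qquad m_\alpha(\eta)\,:=\,\binom{k}{\alpha}\,\frac{\widetilde\chi(\eta)\,\eta^\alpha}{|\eta|^{2k}},
$$
and, crucially, each $m_\alpha$ is smooth with compact support (since $\widetilde\chi$ vanishes near $0$). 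Rescaling and inverting gives
$$
u\,=\,\lambda^{-k}\,(-i)^k\sum_{|\alpha|=k}L_{\alpha,\lambda}*\partial^\alpha u,\qquad \widehat{L_{\alpha,\lambda}}(\xi)\,=\,m_\alpha(\xi/\lambda),
$$
and Young's inequality reduces everything to bounding $\|\mathcal{F}^{-1}m_\alpha\|_{L^1}$ by $C^{k+1}$, which is done exactly as in the first step (the extra factor $\binom{k}{\alpha}$ and the number $\sim(k+1)^d$ of multi-indices are harmless geometric corrections).

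The main technical obstacle is the uniformity in $k$ of the constant $C$. All the explicit steps are routine, but at each step one picks up polynomial-in-$k$ losses from Leibniz-type computations; the careful bookkeeping that shows these losses are dominated by an arbitrary enlargement of the geometric base is what makes the statement non-trivial and is the point that deserves the most attention.
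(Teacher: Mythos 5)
Your proof is correct. Note that the paper does not prove Lemma \ref{l:bern} at all: it is simply recalled as a classical fact of Littlewood--Paley theory (see \cite{B-C-D}, Chapter 2), and your argument --- writing $u$ as a convolution with a rescaled kernel and applying Young's inequality, then using the multinomial identity $|\eta|^{2k}=\sum_{|\alpha|=k}\binom{k}{\alpha}\eta^{2\alpha}$ to invert on the annulus for the reverse inequality, with the weighted-$L^\infty$/Leibniz bookkeeping showing the losses stay geometric in $k$ --- is precisely the standard proof found in that reference.
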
   

Let us recall the characterization of (classical) Sobolev spaces via dyadic decomposition:
for all $s\in\mbb{R}$ there exists a constant $C_s>0$ such that
\begin{equation} \label{est:dyad-Sob}
 \frac{1}{C_s}\,\,\sum^{+\infty}_{\nu=0}2^{2\nu s}\,\|u_\nu\|^2_{L^2}\;\leq\;\|u\|^2_{H^s}\;\leq\;
C_s\,\,\sum^{+\infty}_{\nu=0}2^{2\nu s}\,\|u_\nu\|^2_{L^2}\,,
\end{equation}
where we have set $u_\nu:=\Delta_\nu u$.

This property was then generalized in \cite{C-M} to logarithmic Sobolev spaces, which naturally come into play in the study
of hyperbolic operators with low regularity coefficients (at this purpose, see also \cite{C-DS-F-M_tl} and \cite{C-DS-F-M_wp}).

Let us set $\Pi(D)\,:=\,\log(2+|D|)$, i.e. its symbol is $\pi(\xi)\,:=\,\log(2+|\xi|)$.
\begin{defin} \label{d:log-H^s}
 For all $\alpha\in\R$, we define the space $H^{s+\alpha\log}$ as the space $\Pi^{-\alpha}H^s$, i.e.
$$
f\,\in\,H^{s+\alpha\log}\quad\Longleftrightarrow\quad\Pi^\alpha f\,\in\,H^s\quad\Longleftrightarrow\quad
\pi^\alpha(\xi)\left(1+|\xi|^2\right)^{s/2}\what{f}(\xi)\,\in\,L^2\,.
$$
\end{defin}

We have the following dyadic characterization of these spaces (see \cite{M-2008}, Proposition 4.1.11),
which generalizes property \eqref{est:dyad-Sob}.
\begin{prop} \label{p:log-H}
 Let $s$, $\alpha\,\in\R$. A $u\in\mc{S}'$ belongs to the space $H^{s+\alpha\log}$ if and only if:
\begin{itemize}
 \item[(i)] for all $k\in\N$, $\Delta_ku\in L^2(\R^d)$;
\item[(ii)] set $\,\delta_k\,:=\,2^{ks}\,(1+k)^\alpha\,\|\Delta_ku\|_{L^2}$ for all $k\in\N$, the sequence
$\left(\delta_k\right)_k$ belongs to $\ell^2(\N)$.
\end{itemize}
Moreover, $\|u\|_{H^{s+\alpha\log}}\,\sim\,\left\|\left(\delta_k\right)_k\right\|_{\ell^2}$.
\end{prop}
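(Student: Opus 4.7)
My plan is to reduce the definition of $H^{s+\alpha\log}$, which is phrased in terms of the Fourier multiplier $m(\xi) := \pi^\alpha(\xi)\,(1+|\xi|^2)^{s/2}$, to an equivalent statement about the dyadic blocks. The key observation is that $m$ is essentially constant on each Littlewood--Paley annulus, so by Plancherel the $L^2$-norm of $m\,\what u$ should decompose, up to bounded multiplicative constants, into the weighted $\ell^2$-sum appearing in the statement.

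The first step is to establish the two-sided bound
$$C^{-1}\,2^{js}(1+j)^\alpha \,\leq\, m(\xi) \,\leq\, C\,2^{js}(1+j)^\alpha$$
for all $j\geq 0$ and all $\xi\in\Supp \vphi_j$ (with the convention $\vphi_0=\chi$). For $j\geq 1$ this follows from $|\xi|\sim 2^j$ on the annulus, which gives $(1+|\xi|^2)^{s/2}\sim 2^{js}$, together with $\log(2+|\xi|)\sim 1+j$ on the same set; for the low-frequency block $j=0$ both factors are comparable to $1$, which matches $2^{0\cdot s}\,(1+0)^\alpha$.

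The second step is to exploit the quasi-orthogonality of the dyadic decomposition: since $\Supp \vphi_j \cap \Supp \vphi_k=\emptyset$ whenever $|j-k|\geq 2$, the supports of the $\widehat{\Delta_j u}$ are pairwise disjoint apart from a finite overlap between consecutive indices. Combined with Plancherel this yields
$$\|u\|^2_{H^{s+\alpha\log}} \;=\; \int_{\R^d}|m(\xi)|^2\,|\what u(\xi)|^2\,d\xi \;\sim\; \sum_{j\geq 0}\int_{\R^d}|m(\xi)|^2\,|\widehat{\Delta_j u}(\xi)|^2\,d\xi\,.$$
Inserting the bound on $m$ from the first step and applying Plancherel once more on each piece gives
$$\int_{\R^d}|m(\xi)|^2\,|\widehat{\Delta_j u}(\xi)|^2\,d\xi \;\sim\; 2^{2js}(1+j)^{2\alpha}\,\|\Delta_j u\|^2_{L^2} \;=\; \delta_j^2\,,$$
from which the claimed equivalence $\|u\|_{H^{s+\alpha\log}} \sim \|(\delta_k)_k\|_{\ell^2}$ follows, and with it the equivalence of (i)--(ii) with $u\in H^{s+\alpha\log}$.

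The only subtlety I expect, and the main technical point to keep track of, is to make sense of the above chain when $u$ is merely a tempered distribution, for then both the integral defining $\|u\|^2_{H^{s+\alpha\log}}$ and the $\ell^2$-sum over the blocks may a priori be infinite. This is handled by running the argument first on the partial sums $S_N u=\sum_{j\leq N}\Delta_j u$, where every manipulation is legitimate, and then letting $N\to +\infty$ by monotone convergence: the finiteness of the partial $\ell^2$-norms is then equivalent to the finiteness of the corresponding Fourier integrals, which simultaneously yields the characterization and the norm equivalence.
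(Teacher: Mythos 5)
The paper does not actually prove this proposition: it is quoted directly from M\'etivier's book (Proposition 4.1.11 of \cite{M-2008}), as a logarithmic analogue of the classical dyadic characterization \eqref{est:dyad-Sob} of $H^s$. Your argument is the standard direct proof and it is correct: the weight $m(\xi)=\pi^\alpha(\xi)(1+|\xi|^2)^{s/2}$ is indeed comparable to $2^{js}(1+j)^\alpha$ on the spectral support of $\Delta_j$, and the Plancherel/annulus decomposition then gives the two-sided bound. Two points deserve to be made explicit to make the write-up airtight. First, the ``quasi-orthogonality'' step needs both directions: the upper bound uses the finite overlap of the supports, while the lower bound uses that the cut-offs form a partition of unity, $\sum_j\vphi_j\equiv1$ with $0\leq\vphi_j\leq1$, so that $\sum_j\vphi_j^2\geq\tfrac13$ pointwise; it is this last inequality, rather than disjointness alone, that lets you pass from $\int|m|^2|\what u|^2$ to $\sum_j\int|m|^2|\widehat{\Delta_ju}|^2$. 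Second, in the converse direction your monotone convergence argument presupposes that $\what u$ is a (locally square-integrable) function: this does follow from hypothesis (i), since on any compact set only finitely many $\vphi_j$ are non-zero, so $\what u=\sum_j\vphi_j\what u\in L^2_{\rm loc}$, and one also uses the paper's stated fact that $u=\sum_j\Delta_ju$ in $\mc{S}'$; alternatively one can conclude by weak compactness from the uniform $L^2$ bound on $m\,\widehat{S_Nu}$. With these clarifications your proof is complete and matches the argument one would find in the cited reference.
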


It turns out that also Zygmund classes can be characterized in terms of Littlewood-Paley decomposition as particular Besov spaces
(see the next subsection). We will broadly exploit this fact in proving our results.

However, for reasons which appear clear in the sequel, we need to introduce logarithmic functional spaces, in the same spirit of the ones
of Definition \ref{d:log-H^s}, and to develop paradifferential calculus in this new framework.

\subsubsection{Logarithmic Besov spaces}

We introduce now the class of logarithmic Besov spaces.
We quote and prove here just the basic results we will need in our computations;
we refer to \cite{F_phd} for more properties. Let us point out that, essentially, everything works as in the classical case,
with just slight extensions of the statements and slight modifications in the arguments of the proofs.

We start with a definition.
\begin{defin} \label{d:log-B}
  Let $s$ and $\alpha$ be real numbers, and $1\leq p,r\leq+\infty$. The \emph{non-homogeneous logarithmic Besov space}
$B^{s+\alpha\log}_{p,r}$ is defined as the subset of tempered distributions $u$ for which
$$
\|u\|_{B^{s+\alpha\log}_{p,r}}\,:=\,
\left\|\left(2^{js}\,(1+j)^{\alpha}\,\|\Delta_ju\|_{L^p}\right)_{j\in\N}\right\|_{\ell^r}\,<\,+\infty\,.
$$
\end{defin}

First of all, let us show that the previous definition is independent of the choice of the cut-off functions defining the Littlewood-Paley
decomposition.

\begin{lemma} \label{l:log-B_ind}
 Let $\mc{C}\subset\R^d$ be a ring, $(s,\alpha)\in\R^2$ and $(p,r)\in[1,+\infty]^2$. Let $\left(u_j\right)_{j\in\N}$ be
a sequence of smooth functions such that
$$
{\rm supp}\,\what{u}_j\,\subset\,2^j\,\mc{C}\qquad\quad\mbox{ and }\qquad\quad
\left\|\left(2^{js}\,(1+j)^\alpha\,\|u_j\|_{L^p}\right)_{j\in\N}\right\|_{\ell^r}\,<\,+\infty\,.
$$

Then $u:=\sum_{j\in\N}u_j$ belongs to $B^{s+\alpha\log}_{p,r}$ and
$$
\|u\|_{B^{s+\alpha\log}_{p,r}}\,\leq\,C_{s,\alpha}\,\left\|\left(2^{js}\,(1+j)^\alpha\,
\|u_j\|_{L^p}\right)_{j\in\N}\right\|_{\ell^r}\,.
$$
\end{lemma}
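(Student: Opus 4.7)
The plan is to run the standard ``almost orthogonality'' argument for Besov-type spaces, adapted to the slightly enlarged logarithmic weight $(1+k)^\alpha$. First I would observe that, since $\what{u}_j$ is supported in the dilated ring $2^j\mc{C}$ while $\Delta_k$ localises frequencies in a fixed dilation of a ring (for $k\geq1$) or of a ball (for $k=0$), there exists an integer $N_0$, depending only on $\mc{C}$ and on $\chi,\vphi$, such that
$$
\Delta_k u_j\,\equiv\,0\qquad\mbox{whenever}\qquad |k-j|\,>\,N_0.
$$

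Granted this, I would write
$$
\Delta_k u\,=\,\sum_{|j-k|\leq N_0}\Delta_k u_j
$$
and estimate the $L^p$ norm. Since each $\Delta_k$ is a convolution operator whose kernel is a rescaling of a fixed Schwartz function, Young's inequality gives $\|\Delta_k u_j\|_{L^p}\leq C\,\|u_j\|_{L^p}$, with $C$ independent of $k$ and $j$. Summing over the $2N_0+1$ surviving indices, one obtains
$$
\|\Delta_k u\|_{L^p}\,\leq\,C\,\sum_{|j-k|\leq N_0}\|u_j\|_{L^p}.
$$

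The next step is to insert the weight. For $|j-k|\leq N_0$ one has $2^{ks}\leq 2^{|s|N_0}\,2^{js}$, while $(1+k)^\alpha\leq C_\alpha\,(1+j)^\alpha$ because the ratio $(1+k)/(1+j)$ stays in a compact subset of $\,]0,+\infty[\,$. Therefore
$$
2^{ks}(1+k)^\alpha\,\|\Delta_k u\|_{L^p}\,\leq\,C_{s,\alpha}\sum_{|j-k|\leq N_0}
2^{js}(1+j)^\alpha\,\|u_j\|_{L^p}.
$$
The right-hand side is the discrete convolution of the admissible sequence $\bigl(2^{js}(1+j)^\alpha\|u_j\|_{L^p}\bigr)_{j\in\N}$ with a sequence supported in $\{-N_0,\dots,N_0\}$; Young's convolution inequality in $\ell^r$ then yields the asserted bound on $\|u\|_{B^{s+\alpha\log}_{p,r}}$. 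A preliminary check of the convergence of $\sum_j u_j$ in $\mc{S}'$, which follows from the same estimate applied at a suitably shifted regularity index together with Bernstein's inequalities, ensures that the distribution $u$ is unambiguously defined.

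I do not expect any essentially new obstacle compared with the classical (non-logarithmic) case; the only point to watch is that the logarithmic factor $(1+k)^\alpha$ should not spoil the almost orthogonality step, and this is immediate since $(1+k)^\alpha$ is slowly varying and $|j-k|\leq N_0$. All the remaining ingredients (Young's inequality for $\Delta_k$, discrete convolution in $\ell^r$) are insensitive to this extra weight.
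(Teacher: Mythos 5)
Your proof is correct and follows essentially the same route as the paper's: the almost-orthogonality observation $\Delta_k u_j\equiv0$ for $|k-j|>N_0$, the $L^p$ bound $\|\Delta_k u_j\|_{L^p}\leq C\|u_j\|_{L^p}$, absorption of the weights $2^{ks}(1+k)^\alpha$ using $|k-j|\leq N_0$, and Young's inequality for discrete convolutions in $\ell^r$. The only cosmetic difference is that you bound the ratio $(1+k)^\alpha/(1+j)^\alpha$ directly by a constant, whereas the paper uses the generic inequality $(1+k)/(1+j)\leq 1+|k-j|$ before convolving; your added remark on convergence of $\sum_j u_j$ in $\mc{S}'$ is a point the paper leaves implicit.
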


\begin{proof}
 By spectral localization, we gather that there exists a $n_0\in\N$ such that $\Delta_ku_j=0$ for all $|k-j|>n_0$. Therefore
$$
\|\Delta_ku\|_{L^p}\,\leq\,\sum_{|j-k|\leq n_0}\|\Delta_ku_j\|_{L^p}\,\leq\,C\,\sum_{|j-k|\leq n_0}\|u_j\|_{L^p}\,.
$$
From these relations it immediately follows that
$$
 2^{ks}\,(1+k)^\alpha\,\|\Delta_ku\|_{L^p}\,\leq\,C\,\sum_{|j-k|\leq n_0}2^{(k-j)s}\,
\frac{(1+k)^\alpha}{(1+j)^\alpha}\,2^{js}\,(1+j)^\alpha\,\|u_j\|_{L^p}\,.
$$
Now, as very often in the sequel, we use the fact that
\begin{equation} \label{est:frac_int}
\frac{(1+k)}{(1+j)}\,\leq\,1+|k-j|\,.
\end{equation}
Hence, we get
$$
 2^{ks}\,(1+k)^\alpha\,\|\Delta_ku\|_{L^p}\,\leq\,C\,(\theta *\delta)_k\,,
$$
where we have set (here $\mc{I}_A$ denote the characteristic function of the set $A$)
$$
\theta_h\,:=\,2^{hs}\,(1+h)^{|\alpha|}\,\mc{I}_{[0,n_0]}(h)\qquad\mbox{ and }\qquad
\delta_j\,:=\,2^{js}\,(1+j)^\alpha\,\|u_j\|_{L^p}\,.
$$
Passing to the $\ell^r$ norm and applying Young's inequality for convolutions complete the proof.
\end{proof}

So, Definition \ref{d:log-B} makes sense.
Of course, for $\alpha=0$ we get the classical Besov classes $B^s_{p,r}$.

Recall that, for all $s\in\,\R_+\!\!\setminus\!\N$, the space $B^s_{\infty,\infty}$ coincides with the H\"older space $\mc{C}^s$.
If $s\in\N$, instead, we set $\mc{C}^s_*:=B^s_{\infty,\infty}$, to distinguish it from the space $\mc{C}^s$ of
the differentiable functions with continuous partial derivatives up to the order $s$. Moreover, the strict inclusion
$\mc{C}^s_b\,\hra\,\mc{C}^s_*$ holds, where $\mc{C}^s_b$ denotes the subset of $\mc{C}^s$ functions bounded with all
their derivatives up to the order $s$.
Finally, for $s<0$, the ``negative H\"older space'' $\mc{C}^s$ is defined as the Besov space $B^s_{\infty,\infty}$.

  Let us point out that for any $k\in\N$ and $p\in[1,+\infty]$, we have the following chain of continuous embeddings:
 $$
 B^k_{p,1}\hookrightarrow W^{k,p}\hookrightarrow B^k_{p,\infty}\,,
 $$
  where  $W^{k,p}$ denotes the classical Sobolev space of $L^p$ functions
 with all the derivatives up to the order $k$ in $L^p$. However, for all $s\in\R$, we have the equivalence $B^s_{2,2}\equiv H^s$,
 as stated by relation \eqref{est:dyad-Sob}, and Proposition \ref{p:log-H} tells us that this is still true when considering
 the logarithmic case.

Generally speaking, logarithmic Besov spaces are intermediate classes of functions between the classical ones. As a matter of fact, we
have the following result.
\begin{prop} \label{p:log-emb}
The space $B^{s_1+\alpha_1\log}_{p_1,r_1}$ is continuously embedded in the space $B^{s_2+\alpha_2\log}_{p_2,r_2}$ whenever
$\,1\,\leq\,p_1\,\leq\,p_2\,\leq\,+\infty$ and one of the following conditions holds true:
\begin{itemize}
\item $s_2\,=\,s_1\,-\,d\,(1/p_1\,-\,1/p_2)\,$, $\,\alpha_2\,\leq\,\alpha_1\,$ and $\,1\,\leq\,r_1\,\leq\,r_2\,\leq\,+\infty\,$;
\item $s_2\,=\,s_1\,-\,d\,(1/p_1\,-\,1/p_2)\,$ and $\,\alpha_1\,-\,\alpha_2\,>\,1\,$;
\item $s_2\,<\,s_1\,-\,d\,(1/p_1\,-\,1/p_2)\,$.
\end{itemize}
\end{prop}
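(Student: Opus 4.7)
The strategy is the classical reduction to a weighted inequality between the dyadic decompositions, with Bernstein's inequality absorbing the change in Lebesgue exponent, followed by a case analysis for the summability in $\ell^{r_2}$.

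For $u\in B^{s_1+\alpha_1\log}_{p_1,r_1}$, I would set
$$
a_j\,:=\,2^{js_1}(1+j)^{\alpha_1}\,\|\Delta_j u\|_{L^{p_1}}\,,\qquad
b_j\,:=\,2^{js_2}(1+j)^{\alpha_2}\,\|\Delta_j u\|_{L^{p_2}}\,,
$$
so that $\|u\|_{B^{s_1+\alpha_1\log}_{p_1,r_1}}=\|(a_j)\|_{\ell^{r_1}}$ and $\|u\|_{B^{s_2+\alpha_2\log}_{p_2,r_2}}=\|(b_j)\|_{\ell^{r_2}}$. Since $\what{\Delta_j u}$ is spectrally supported in a ball of radius $\sim 2^j$, Bernstein's inequality (Lemma \ref{l:bern}) gives $\|\Delta_j u\|_{L^{p_2}}\leq C\,2^{jd(1/p_1-1/p_2)}\|\Delta_j u\|_{L^{p_1}}$. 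Setting $\sigma:=s_1-s_2-d(1/p_1-1/p_2)\geq0$ and $\beta:=\alpha_1-\alpha_2$, this translates into the pointwise bound
$$
b_j\,\leq\,C\,c_j\,a_j\,,\qquad\mbox{ with }\qquad c_j\,:=\,2^{-j\sigma}(1+j)^{-\beta}\,.
$$
The whole proof then reduces to estimating $\|(c_j a_j)\|_{\ell^{r_2}}$ by $\|(a_j)\|_{\ell^{r_1}}$ under the hypotheses of each case.

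In Case (i) one has $\sigma=0$ and $\beta\geq0$, hence $c_j\leq 1$; the conclusion follows from the discrete embedding $\ell^{r_1}\hra\ell^{r_2}$, valid because $r_1\leq r_2$. Case (iii) is the easiest: the exponential factor $2^{-j\sigma/2}$ dominates $(1+j)^{-\beta}$, so $(c_j)$ lies in every $\ell^q$, $q\in[1,\infty]$, and H\"older's inequality for sequences (combined, whenever $r_2\geq r_1$, with $\ell^{r_1}\hra\ell^{r_2}$) yields the bound for arbitrary $r_1,r_2$. Case (ii) is the delicate one and is really the heart of the proposition: $\sigma=0$ and $c_j=(1+j)^{-\beta}$ is only borderline summable, the threshold $\beta>1$ being exactly what guarantees $(c_j)\in\ell^q$ for every $q\geq1$. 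When $r_2<r_1$, I would pick $q$ with $1/q=1/r_2-1/r_1\in(0,1]$ and apply H\"older in $j$ to obtain $\|(c_j a_j)\|_{\ell^{r_2}}\leq\|(c_j)\|_{\ell^q}\|(a_j)\|_{\ell^{r_1}}$; when $r_2\geq r_1$, the bound $c_j\leq 1$ together with $\ell^{r_1}\hra\ell^{r_2}$ is enough. The main subtlety to highlight is that the strict inequality $\beta>1$ must absorb \emph{every} admissible pair $(r_1,r_2)$ uniformly, which is why it (and not a non-strict version) must appear in the hypothesis of Case (ii); beyond this, the proof is bookkeeping around Bernstein and H\"older.
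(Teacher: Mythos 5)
Your proof is correct and follows essentially the same route as the paper: Bernstein's inequality to pass from $L^{p_1}$ to $L^{p_2}$ on each dyadic block, and then summability of the weight $2^{-j\sigma}(1+j)^{-\beta}$ in the three regimes. The only cosmetic difference is that you handle the $\ell^r$ indices via H\"older with $1/q=1/r_2-1/r_1$, while the paper reduces directly to the endpoint cases ($r_1=r_2=1$, resp. $r_1=+\infty$, $r_2=1$) through the embeddings of $\ell^r$ spaces; both implementations are equivalent.
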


\begin{proof}
As in the classical case, these properties are straightforward consequences of Bernstein's inequalities.
As a matter of fact, considering for a while the first instance, and just the case $r_1=r_2=1$ thanks to
the embeddings of $\ell^r$ spaces, we can write
\begin{equation} \label{est:embedd}
\sum_{j=0}^{+\infty}2^{js_2}\,(1+j)^{\alpha_2}\,\|\Delta_ju\|_{L^{p_2}}\,\leq\,
C\sum_{j=0}^{+\infty}2^{js_1}\,(1+j)^{\alpha_1}\,\|\Delta_ju\|_{L^{p_1}}\,(1+j)^{\alpha_2-\alpha_1}\,2^{j\wtilde{s}}\,,
\end{equation}
where we have set
$$
\wtilde{s}\,=\,s_2\,-\,s_1\,-\,d\left(\frac{1}{p_1}\,-\,\frac{1}{p_2}\right)\,.
$$
Now, in the first instance we have $\wtilde{s}=0$ and $\alpha_2-\alpha_1\leq0$, and the conclusion follows.

For the proof of the second part, it's enough to consider the endpoint case $r_2=1$, $r_1=+\infty$.
Again, the result issues from \eqref{est:embedd}, with $\wtilde{s}=0$ and
$\alpha_1-\alpha_2>1$.

The last sentence can be proved in the same way, again in the limit instance $r_2=1$, $r_1=+\infty$, noting that $\wtilde{s}<0$,
and this behaviour is stronger than the logarithmic one.
\end{proof}

Now we want to consider the action of Fourier multipliers on non-homogeneous logarithmic Besov spaces.
First of all, we have to give a more general definition of symbols.
\begin{defin} \label{d:log-mult}
 A smooth function $f:\R^d\longrightarrow\R\,$ is said to be a \emph{$S^{m+\delta\log}$-multiplier} if,
for all multi-index $\nu\in\N^d$,
there exists a constant $C_\nu$ such that
$$
\forall\;\xi\in\R^d\,,\qquad\left|\d^\nu_\xi f(\xi)\right|\,\leq\,C_\nu\,\bigl(1+|\xi|\bigr)^{m-|\nu|}\,
\log^\delta\bigl(1+|\xi|\bigr)\,.
$$
\end{defin}

\begin{prop} \label{p:log-mult}
 Let $m,\delta\,\in\R$ and $f$ be a $S^{m+\delta\log}$-multiplier.

Then for all real numbers $s$ and $\alpha$ and all  $(p,r)\in[1,+\infty]^2$, the operator $f(D)$ maps
$B^{s+\alpha\log}_{p,r}$ into $B^{(s-m)+(\alpha-\delta)\log}_{p,r}$ continuously.
\end{prop}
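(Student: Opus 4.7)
The plan is to reduce the estimate, via the localization lemma, to bounding the operator norm of $f(D)$ on each dyadic block, and then to bound the $L^1$ norm of the convolution kernel associated to $f$ truncated to a dyadic ring by $2^{jm}(1+j)^\delta$.

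More precisely, fix a smooth cut-off $\wtilde\vphi$ supported in a ring $\mc{C}$ containing the support of $\vphi$, and equal to $1$ on the ring where $\what{\Delta_j u}$ is localized; similarly pick $\wtilde\chi$ supported in a ball equal to $1$ on the support of $\chi$ for the low-frequency block $j=0$. Set $f_j(\xi):=\wtilde\vphi(2^{-j}\xi)\,f(\xi)$ for $j\geq 1$ and $f_0(\xi):=\wtilde\chi(\xi)\,f(\xi)$. Then by spectral localization
$$
f(D)\,\Delta_j u\;=\;\bigl(\mc{F}^{-1}f_j\bigr)*\Delta_j u\,,
$$
so Young's inequality gives $\|f(D)\Delta_j u\|_{L^p}\leq \|\mc{F}^{-1}f_j\|_{L^1}\,\|\Delta_j u\|_{L^p}$. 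Since each $f(D)\Delta_j u$ remains spectrally supported in $2^j\mc{C}$, Lemma \ref{l:log-B_ind} then yields
$$
\|f(D)u\|_{B^{(s-m)+(\alpha-\delta)\log}_{p,r}}\;\leq\;C\,\Bigl\|\bigl(2^{j(s-m)}(1+j)^{\alpha-\delta}\,\|f(D)\Delta_ju\|_{L^p}\bigr)_{j\in\N}\Bigr\|_{\ell^r}\,,
$$
and the desired bound follows at once provided
\begin{equation} \label{est:plan_key}
\|\mc{F}^{-1}f_j\|_{L^1}\;\leq\;C\,2^{jm}\,(1+j)^\delta\qquad\text{for all }j\geq 0.
\end{equation}

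To prove \eqref{est:plan_key}, for $j\geq 1$ I rescale: set $\wtilde f_j(\eta):=\wtilde\vphi(\eta)\,f(2^j\eta)$, so that $\mc{F}^{-1}f_j(x)=2^{jd}\,(\mc{F}^{-1}\wtilde f_j)(2^jx)$ and hence $\|\mc{F}^{-1}f_j\|_{L^1}=\|\mc{F}^{-1}\wtilde f_j\|_{L^1}$. On the fixed ring $\mc{C}$, Leibniz and the multiplier bound from Definition \ref{d:log-mult} give
$$
\bigl|\d^\nu_\eta\wtilde f_j(\eta)\bigr|\;\leq\;C_\nu\,2^{jm}\,\log^\delta(1+2^j)\;\leq\;C_\nu'\,2^{jm}\,(1+j)^\delta
$$
for every multi-index $\nu$, uniformly in $j$. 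Choosing an even integer $N>d/2$ and using Cauchy--Schwarz with the weight $(1+|x|^2)^{-N/2}\in L^2(\R^d)$ followed by Plancherel, one obtains
$$
\|\mc{F}^{-1}\wtilde f_j\|_{L^1}\;\leq\;\bigl\|(1+|x|^2)^{-N/2}\bigr\|_{L^2}\,\bigl\|(1-\Delta_\eta)^{N/2}\wtilde f_j\bigr\|_{L^2}\;\leq\;C\,2^{jm}\,(1+j)^\delta\,,
$$
which is \eqref{est:plan_key} for $j\geq 1$. The case $j=0$ is handled identically (in fact more easily) with $\wtilde\chi$ in place of $\wtilde\vphi$, noting that the logarithmic factor is just a bounded constant at low frequencies.

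The only delicate point is keeping track of the logarithmic weight through the rescaling: the classical argument reproduces the factor $2^{jm}$, and the extra logarithm of $(1+|\xi|)$ in Definition \ref{d:log-mult} is converted, on the support of $\wtilde f_j$ (where $|\xi|\sim 2^j$), into the factor $\log^\delta(1+2^j)\sim (1+j)^\delta$, which is what allows one to shift the logarithmic exponent from $\alpha$ down to $\alpha-\delta$ in the target space. Everything else is standard once Lemma \ref{l:log-B_ind} is in hand.
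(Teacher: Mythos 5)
Your proof is correct and follows essentially the same route as the paper: reduce to the dyadic blocks, localize the symbol on a fixed ring after rescaling, and show the kernel bound $\|\mc{F}^{-1}f_j\|_{L^1}\leq C\,2^{jm}(1+j)^\delta$, the logarithmic weight being converted into $(1+j)^\delta$ on the ring $|\xi|\sim 2^j$, before concluding by Young's inequality. The only (harmless) difference is technical: you obtain the $L^1$ kernel bound via weighted Cauchy--Schwarz and Plancherel with an even $N>d/2$, whereas the paper bounds $(1+|x|^2)^N|F_j(x)|$ pointwise by integrating by parts with $(\Id-\Delta_\xi)^N$.
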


\begin{proof}
 According to Lemma \ref{l:log-B_ind}, it's enough to prove that, for all $j\geq0$,
$$
2^{(s-m)j}\,(1+j)^{\alpha-\delta}\,\left\|f(D)\,\Delta_ju\right\|_{L^p}\,\leq\,
C\,2^{js}\,(1+j)^\alpha\,\left\|\Delta_ju\right\|_{L^p}\,.
$$

Let us deal with low frequencies first. Take a $\theta\in\mc{D}(\R^d)$ such that $\theta\equiv1$ in a neighborhood
of ${\rm supp}\chi$: passing to the phase space, it's easy to see that $f(D)\,\Delta_0u\,=\,(\theta f)(D) \Delta_0u$.
As $\mc{F}^{-1}(\theta f)\in L^1$, Young's inequality for convolutions gives us the desired estimate for $j=0$.

Now we focus on high frequencies and we fix a $j\geq1$. Noticing that the function
$\wtilde{\vphi}_j:=\vphi_{j-1}+\vphi_j+\vphi_{j+1}$ is equal to $1$ on the support of $\vphi_j$, with easy computations
we get the equality
$$
f(D)\,\Delta_ju\;=\;2^{jd}\,F_j(2^j\,\cdot\,)\,*\,\Delta_ju\,,
$$
where, denoted $\wtilde{\vphi}=\vphi_0+\vphi_1+\vphi_2$, we have set
$$
F_j(x)\;=\;\frac{1}{(2\pi)^d}\,\int_{\R^d}e^{ix\cdot\xi}\,f(2^j\xi)\,\wtilde{\vphi}(\xi)\,d\xi\,.
$$
Let us prove that $F_j\in L^1$. For all $N\in\N$, we can write
\begin{eqnarray*}
(1+|x|^2)^N\,F_j(x) & = & \frac{1}{(2\pi)^d}\int_{\R^d_\xi}e^{ix\cdot\xi}\left(\Id-\Delta_\xi\right)^N\biggl(f(2^j\xi)\,
\wtilde{\vphi}(\xi)\biggr)\,d\xi \\
& = & \sum_{|\beta|+|\gamma|\leq2N}\frac{C_{\beta,\gamma}}{(2\pi)^d}\,2^{j|\beta|}
\int_{\R^d_\xi}e^{ix\cdot\xi}\,(\d^\beta f)(2^j\xi)\,(\d^\gamma\wtilde{\vphi})(\xi)\,d\xi\,.
\end{eqnarray*}
In fact, the integration is not performed on the whole $\R^d$, but only on the support of $\wtilde{\vphi}$, which is
a ring $\mc{R}\,:=\,\left\{c\,\leq\,|\xi|\,\leq\,C\right\}$, independent of $j$. Therefore, recalling the definition
of $S^{m+\delta\log}$-multiplier, we gather
$$
(1+|x|^2)^N\,|F_j(x)|\,\leq\,
C_{d,N}\,2^{jm}\,(1+j)^\delta\,,
$$
which implies that, for $N$ big enough, $F_j\in L^1(\R^d_x)$ and $\|F_j\|_{L^1}\,\leq\,C\,2^{mj}\,(1+j)^\delta$. Young's
inequality for convolution leads then to the result.
\end{proof}

Let us conclude this part with two technical lemmas, which will be immediately useful in the next paragraph.

We start with a characterization of logarithmic Besov spaces in terms of the low frequencies cut-off operators.
This will be relevant in analysing continuity properties of the paraproduct operator.

\begin{lemma} \label{l:log-S_j}
Fix $(s,\alpha)\in\R^2$ and $(p,r)\in[1,+\infty]^2$, and let $u\in\mc{S}'$ given.
\begin{itemize}
\item[(i)] If the sequence $\bigl(2^{js}\,(1+j)^\alpha\,\|S_ju\|_{L^p}\bigr)_{j\in\N}$ belongs to $\ell^r$,
then $u\in B^{s+\alpha\log}_{p,r}$ and
$$
\|u\|_{B^{s+\alpha\log}_{p,r}}\;\leq\;C\,\left\|\bigl(2^{js}\,(1+j)^\alpha\,\|S_ju\|_{L^p}\bigr)_{j\in\N}\right\|_{\ell^r}\,,
$$
for some constant $C>0$ depending only on $s$ and $\alpha$, but not on $u$.
\item[(ii)] Suppose $u\in B^{s+\alpha\log}_{p,r}$, with $s<0$. Then the sequence
$\bigl(2^{js}\,(1+j)^\alpha\,\|S_ju\|_{L^p}\bigr)_{j\in\N}\,\in\,\ell^r$, and
$$
\left\|\bigl(2^{js}\,(1+j)^\alpha\,\|S_ju\|_{L^p}\bigr)_{j\in\N}\right\|_{\ell^r}\;\leq\;
\wtilde{C}\,\|u\|_{B^{s+\alpha\log}_{p,r}}\,,
$$
for some constant $\wtilde{C}>0$ depending only on $s$ and $\alpha$.
\item[(iii)] In the endpoint case $s=0$, one can only infer, for any $\alpha\leq0$,
$$
\left\|\biggl((1+j)^\alpha\,\|S_ju\|_{L^p}\biggr)_{j\in\N}\right\|_{\ell^\infty}\;\leq\;
\wtilde{C}\,\|u\|_{B^{0+\alpha\log}_{p,1}}\,.
$$
\end{itemize}
\end{lemma}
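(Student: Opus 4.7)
The plan is to reduce each of the three statements to an elementary discrete convolution estimate, exploiting the basic identities $\Delta_0 u = S_0 u$, $\Delta_j u = S_j u - S_{j-1}u$ for $j \geq 1$, and $S_j u = \sum_{k=0}^{j} \Delta_k u$, together with the weight inequality
$$
\frac{(1+j)^\alpha}{(1+k)^\alpha}\,\leq\,(1+|j-k|)^{|\alpha|}\,,
$$
which follows from \eqref{est:frac_int} by distinguishing the cases $j\geq k$, $j<k$ and the sign of $\alpha$. This says that the logarithmic weights are stable, up to a polynomial factor in $|j-k|$, under shifts of the index.

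For part (i), I would combine $\|\Delta_j u\|_{L^p}\leq \|S_j u\|_{L^p}+\|S_{j-1}u\|_{L^p}$ (with $\Delta_0 u = S_0 u$ handled apart) with the fact that $(1+j)^\alpha/(1+(j-1))^\alpha$ is bounded by $2^{|\alpha|}$. Multiplying through by $2^{js}(1+j)^\alpha$ yields
$$
2^{js}(1+j)^\alpha\|\Delta_j u\|_{L^p}\,\leq\,2^{js}(1+j)^\alpha\|S_j u\|_{L^p}\,+\,C_{s,\alpha}\,2^{(j-1)s}(1+(j-1))^\alpha\|S_{j-1}u\|_{L^p}\,,
$$
and taking the $\ell^r$ norm (using translation invariance of $\ell^r$ for the second piece) gives the desired bound, with no restriction on $s$ or $\alpha$.

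For part (ii), the telescoping $S_j u=\sum_{k=0}^{j}\Delta_k u$ together with the weight inequality gives
$$
2^{js}(1+j)^\alpha\|S_j u\|_{L^p}\,\leq\,\sum_{k=0}^{j}\theta_{j-k}\,\cdot\,2^{ks}(1+k)^\alpha\|\Delta_k u\|_{L^p}\,,\qquad
\theta_m\,:=\,2^{ms}\,(1+m)^{|\alpha|}\,.
$$
This realizes the left-hand side as a discrete convolution on $\N$ of the defining sequence of $\|u\|_{B^{s+\alpha\log}_{p,r}}$ against the kernel $\theta$. The hypothesis $s<0$ ensures that the factor $2^{ms}$ decays exponentially and dominates the polynomial $(1+m)^{|\alpha|}$, so $\theta\in\ell^{1}(\N)$; Young's inequality for discrete convolutions then delivers the $\ell^r$ bound. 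The main obstacle here, and the structural reason the statement must break down at the endpoint $s=0$, is precisely the loss of $\ell^1$-summability of $\theta$.

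Finally, in the critical case $s=0$, $\alpha\leq 0$ of (iii), the bound $\|S_j u\|_{L^p}\leq \sum_{k=0}^{j}\|\Delta_k u\|_{L^p}$ combined with the monotonicity $(1+j)^\alpha\leq (1+k)^\alpha$ for $k\leq j$ (since $\alpha\leq 0$) yields the pointwise inequality
$$
(1+j)^\alpha\,\|S_j u\|_{L^p}\,\leq\,\sum_{k=0}^{j}(1+k)^\alpha\,\|\Delta_k u\|_{L^p}\,\leq\,\|u\|_{B^{0+\alpha\log}_{p,1}}\,,
$$
uniformly in $j$. Since the ``kernel'' is now only bounded and not summable, no $\ell^r$ bound with $r<\infty$ can be extracted by this method, and the hypothesis must be strengthened to $r=1$ in order to control the sum on the right.
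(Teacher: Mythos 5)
Your proposal is correct and follows essentially the same route as the paper's proof: telescoping $S_j u = \sum_{k=0}^j \Delta_k u$, the weight inequality $\frac{(1+j)^\alpha}{(1+k)^\alpha}\leq(1+|j-k|)^{|\alpha|}$, and Young's inequality for discrete convolutions with kernel $\theta_m = 2^{ms}(1+m)^{|\alpha|}$, which is $\ell^1$ precisely when $s<0$. You are in fact more careful than the paper's own text about the index conventions (the paper's proof momentarily writes $\Delta_j=S_{j+1}-S_j$ and $S_j=\sum_{k\leq j-1}\Delta_k$, in tension with the definition $S_j=\sum_{k\leq j}\Delta_k$ given earlier), but this is only an index shift and does not affect the substance.
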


\begin{proof}
 From the definitions, we have $\Delta_j\,=\,S_{j+1}-S_j$. So we can write:
\begin{eqnarray*}
 2^{js}\,(1+j)^\alpha\,\left\|\Delta_ju\right\|_{L^p} & \leq & 2^{js}\,(1+j)^\alpha\left(\left\|S_{j+1}u\right\|_{L^p}\,+\,
\left\|S_ju\right\|_{L^p}\right) \\[1ex]
& \leq & 2^{(j+1)s}\,(2+j)^\alpha\,\left\|S_{j+1}u\right\|_{L^p}\,\frac{(1+j)^\alpha}{(2+j)^\alpha}\,2^{-s}\,+\,
2^{js}\,(1+j)^\alpha\left\|S_ju\right\|_{L^p}\,.
\end{eqnarray*}
By Minowski's inequality, we get the first part of the statement.

On the other hand, using the definition of the operator $S_j$, we have
\begin{eqnarray*}
 2^{js}\,(1+j)^\alpha\,\left\|S_ju\right\|_{L^p} & \leq & 2^{js}\,(1+j)^\alpha
\sum_{k\leq j-1}\left\|\Delta_ku\right\|_{L^p} \\
& \leq & \sum_{k\leq j-1}2^{(j-k)s}\,\frac{(1+j)^\alpha}{(1+k)^\alpha}\,2^{ks}\,(1+k)^\alpha\,\left\|\Delta_ku\right\|_{L^p} \\[1ex]
& \leq & C\,\left(\theta\,*\,\delta\right)_j\,,
\end{eqnarray*}
where we have argued as in proving Lemma \ref{l:log-B_ind}, setting
$$
\theta_h\,:=\,2^{hs}\,(1+h)^{|\alpha|}\qquad\mbox{ and }\qquad \delta_k\,:=\,2^{ks}\,(1+k)^\alpha\,\left\|\Delta_ku\right\|_{L^p}\,.
$$
Then, in the case $s<0$, the sequence
 $\left(\theta_h\right)_h\in\ell^1$; hence, Young's inequality for
convolution gives us the result.

For the case $s=0$, $\alpha\leq0$, we argue as before and we write
\begin{eqnarray*}
(1+j)^\alpha\,\left\|S_ju\right\|_{L^p} & = &
\sum_{k\leq j-1}\frac{(1+k)^{-\alpha}}{(1+j)^{-\alpha}}\,(1+k)^\alpha\,\left\|\Delta_ku\right\|_{L^p} \\
& \leq & \sum_{k\leq j-1}\frac{j^{-\alpha}}{(1+j)^{-\alpha}}\,(1+k)^\alpha\,\left\|\Delta_ku\right\|_{L^p}\,;
\end{eqnarray*}
this relation allows us to conlude, passing to the $\ell^\infty$ norm with respect to $j$.
\end{proof}

The second lemma, instead, will be useful for the analysis of the remainder operator in the Bony's paraproduct decomposition.

\begin{lemma} \label{l:log-ball}
 Let $\mc{B}$ be a ball of $\R^d$, and the couple $(p,r)$ belong to $[1,+\infty]^2$.
Let $s>0$ and $\alpha\in\R$.
Let $\left(u_j\right)_{j\in\N}$ be a sequence of smooth functions such that
$$
{\rm supp}\,\what{u}_j\,\subset\,2^j\mc{B}\qquad\mbox{ and }\qquad
\bigl(2^{js}\,(1+j)^\alpha\,\left\|u_j\right\|_{L^p}\bigr)_{j\in\N}\,\in\,\ell^r\,.
$$

Then the function $\,u\,:=\,\sum_{j\in\N}u_j\,$ belongs to the space $B^{s+\alpha\log}_{p,r}$. Moreover, there exists a constant
$C$, depending only on $s$ and $\alpha$, such that
$$
\|u\|_{B^{s+\alpha\log}_{p,r}}\,\leq\,C\,\left\|\left(2^{js}\,(1+j)^\alpha\,
\left\|u_j\right\|_{L^p}\right)_{j\in\N}\right\|_{\ell^r}\,.
$$

In the endpoint case $s=0$, one can just infer, for any $\alpha\geq0$,
$$
\|u\|_{B^{0+\alpha\log}_{p,\infty}}\,\leq\,C\,\left\|\biggl((1+j)^\alpha\,\left\|u_j\right\|_{L^p}\biggr)_{j\in\N}\right\|_{\ell^1}\,.
$$
\end{lemma}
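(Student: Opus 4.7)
The plan is to mirror the proof of Lemma \ref{l:log-B_ind}, paying attention to the single structural change: the spectral supports are balls rather than rings. Fix $R>0$ such that $\mathcal{B}\subset B(0,R)$. Then $\mathrm{supp}\,\widehat{u}_j\subset B(0,2^jR)$, and combining with the localization of $\Delta_k$ on a fixed ring (for $k\geq1$) we find an integer $N_0\in\N$, depending only on $R$, such that $\Delta_k u_j\equiv 0$ whenever $k-j\geq N_0$. The crucial asymmetry with Lemma \ref{l:log-B_ind} is that nothing forces $\Delta_k u_j$ to vanish when $j-k$ is large, so the admissible set of indices becomes the infinite tail $j\geq k-N_0$. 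Using the uniform $L^p\to L^p$ boundedness of the $\Delta_k$'s, one obtains
$$
\|\Delta_k u\|_{L^p}\;\leq\;\sum_{j\geq k-N_0}\|\Delta_k u_j\|_{L^p}\;\leq\;C\sum_{j\geq k-N_0}\|u_j\|_{L^p}.
$$

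Multiplying by $2^{ks}(1+k)^\alpha$ and setting $\delta_j:=2^{js}(1+j)^\alpha\|u_j\|_{L^p}$, I rewrite this as
$$
2^{ks}(1+k)^\alpha\|\Delta_k u\|_{L^p}\;\leq\;C\sum_{j\geq k-N_0}2^{(k-j)s}\,\frac{(1+k)^\alpha}{(1+j)^\alpha}\,\delta_j.
$$
Applying \eqref{est:frac_int} symmetrically (both to $(1+k)/(1+j)$ and to its reciprocal), the fractional factor is controlled by $(1+|k-j|)^{|\alpha|}$ up to a constant, and the right-hand side becomes a discrete convolution $C(\theta*\delta)_k$ with kernel
$$
\theta_h\;:=\;2^{hs}(1+|h|)^{|\alpha|}\,\mathcal{I}_{(-\infty,N_0]}(h).
$$
The assumption $s>0$ is what makes the argument go through: it forces the exponential decay $2^{hs}\to 0$ as $h\to-\infty$ to dominate the polynomial weight $(1+|h|)^{|\alpha|}$, so $\theta\in\ell^1(\Z)$. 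Young's inequality for convolutions then yields the announced bound in $\ell^r$.

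For the endpoint $s=0$ with $\alpha\geq 0$, the kernel $\theta_h=(1+|h|)^\alpha\,\mathcal{I}_{(-\infty,N_0]}(h)$ is no longer summable as $h\to-\infty$ and the previous route fails. However, for $\alpha\geq 0$ and $j\geq k-N_0$, a direct check shows that $(1+k)^\alpha/(1+j)^\alpha$ is uniformly bounded (the worst case being $j=k-N_0$, which contributes only a harmless constant). Hence
$$
(1+k)^\alpha\|\Delta_k u\|_{L^p}\;\leq\;C\sum_{j\geq 0}(1+j)^\alpha\|u_j\|_{L^p}
$$
uniformly in $k$, and taking the supremum yields the desired $B^{0+\alpha\log}_{p,\infty}$ estimate from the $\ell^1$ norm of the data. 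The main and essentially only obstacle is the role of the strict positivity $s>0$: unlike the ring case of Lemma \ref{l:log-B_ind}, one must here absorb an infinite tail in $h$, and this is precisely what $s>0$ supplies; all remaining steps are bookkeeping of the logarithmic weights.
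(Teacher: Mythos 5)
Your proof is correct and follows essentially the same route as the paper's: identify the one-sided vanishing $\Delta_k u_j\equiv0$ for $k>j+n_0$, bound the weighted dyadic norm by a convolution with the kernel $2^{hs}(1+|h|)^{|\alpha|}$ supported on $h\leq n_0$ (summable precisely because $s>0$), and conclude by Young's inequality, with the endpoint $s=0$, $\alpha\geq0$ handled by the uniform bound $(1+k)/(1+j)\leq 1+n_0$ and an $\ell^1$--$\ell^\infty$ pairing. No gaps.
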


\begin{proof}
 We have to estimate $\|\Delta_ku\|_{L^p}\,\leq\,\sum_{j}\|\Delta_ku_j\|_{L^p}$.

From our hypothesis on the support
of each $\what{u}_j$, we infer that there exists an index $n_0\in\N$ such that $\Delta_ku_j\equiv0$ for all
$k>j+n_0$. Therefore, arguing as already done in previous proofs,
\begin{eqnarray*}
2^{ks}\,(1+k)^\alpha\,\|\Delta_ku\|_{L^p} & \leq & \sum_{j\geq k-n_0}2^{(k-j)s}\,\frac{(1+k)^\alpha}{(1+j)^\alpha}\,
2^{js}\,(1+j)^\alpha\,\left\|u_j\right\|_{L^p} \\
& \leq & \sum_{j\geq k-n_0}2^{(k-j)s}\,(1+|k-j|)^{|\alpha|}\,2^{js}\,(1+j)^\alpha\,\left\|u_j\right\|_{L^p}\,.
\end{eqnarray*}
So, under the hypothesis $s>0$, we can conclude thanks to Young's inequality for convolutions.

In the second case $s=0$ and $\alpha\geq0$, it's enough to notice that, as $k\leq j+n_0$,
$$
\frac{1+k}{1+j}\,\leq\,1+n_0\qquad\Longrightarrow\qquad
2^{ks}\,(1+k)^\alpha\,\|\Delta_ku\|_{L^p}\,\leq\,(1+n_0)^\alpha
\sum_{j\geq k-n_0}(1+j)^\alpha\,\left\|u_j\right\|_{L^p}\,.
$$
Taking then the $\ell^\infty$ norm with respect to $k$ gives us the result also in this instance.
\end{proof}

\subsubsection{Paradifferential calculus in logarithmic classes}

We now reconsider classical paradifferential calculus results, namely about paraproducts and compositions, in the new logarithmic framework.

Thanks to Littlewood-Paley decomposition, given two tempered distributions $u$ and $v$, formally one can write the product
$u\,v\,=\,\sum_{j,k}\Delta_ju\,\Delta_kv$. Now, due to the
spectral localization of cut-off operators, we have the following \emph{Bony's decomposition} (which was introduced in paper \cite{Bony}):
\begin{equation}\label{eq:bony}
u\,v\,=\,T_uv\,+\,T_vu\,+\,R(u,v)\,,
\end{equation}
where we have defined the paraproduct and remainder operators respectively as
$$
T_uv\,:=\,\sum_jS_{j-2}u\,\Delta_jv\qquad\hbox{ and }\qquad
R(u,v)\,:=\,\sum_j\,\sum_{|k-j|\leq2}\Delta_ju\,\Delta_kv\,.
$$
Let us immediately note that the generic term $S_{j-2}u\,\,\Delta_jv$ is spectrally supported in a dyadic annulus $2^j\wtilde{\mc{C}}$, while,
for all fixed $j$, $\sum_k\Delta_ju\,\Delta_kv$ is spectrally localized in a ball $2^j\mc{B}$. We stress the fact that both
$\wtilde{\mc{C}}$ and $\mc{B}$ are fixed, and they don't depend on $j$.

We start with the continuity properties of the paraproduct operator.

\begin{thm} \label{t:log-pp}
 Let $(s,\alpha,\beta)\,\in\R^3$ and $t>0$. Let also $(p,r,r_1,r_2)$ belong to $[1,+\infty]^4$.

The paraproduct operator 
$T$ maps $L^\infty\times B^{s+\alpha\log}_{p,r}$ in $B^{s+\alpha\log}_{p,r}$,
and  $B^{-t+\beta\log}_{\infty,r_2}\times B^{s+\alpha\log}_{p,r_1}$ in $B^{(s-t)+(\alpha+\beta)\log}_{p,q}$,
with $1/q\,:=\,\min\left\{1\,,\,1/r_1\,+\,1/r_2\right\}$.
Moreover, the following estimates hold:
\begin{eqnarray*}
\|T_uv\|_{B^{s+\alpha\log}_{p,r}} & \leq & C\,\|u\|_{L^\infty}\,\|\nabla v\|_{B^{(s-1)+\alpha\log}_{p,r}} \\
\|T_uv\|_{B^{(s-t)+(\alpha+\beta)\log}_{p,q}} & \leq &
C\,\|u\|_{B^{-t+\beta\log}_{\infty,r_2}}\,\|\nabla v\|_{B^{(s-1)+\alpha\log}_{p,r_1}}\,.
\end{eqnarray*}
Moreover, the second inequality still holds true if $t=0$, when $\beta\leq0$ and $r_2=+\infty$.
\end{thm}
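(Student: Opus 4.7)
The plan is Bony's standard paraproduct strategy, adapted to the logarithmic Besov framework. Write $T_uv=\sum_{j\ge 2}S_{j-2}u\,\Delta_jv$ (the $j\le 1$ terms vanish since $S_{-2}=S_{-1}=0$). Each generic summand has Fourier transform supported in a common dyadic annulus $2^j\wtilde{\mc C}$, with $\wtilde{\mc C}$ independent of $j$, because $\widehat{S_{j-2}u}$ is localized in $\{|\xi|\le 2^{j-1}\}$ while $\widehat{\Delta_j v}$ is annular around $2^j$. Lemma \ref{l:log-B_ind} is thus the natural reassembly tool: it reduces the problem to an $\ell^q$-estimate on the weighted sequence $\bigl(2^{j\sigma}(1+j)^{\gamma}\,\|S_{j-2}u\,\Delta_jv\|_{L^p}\bigr)_j$ for the appropriate pair $(\sigma,\gamma)$. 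In both cases, I would bound $\|S_{j-2}u\,\Delta_jv\|_{L^p}\le\|S_{j-2}u\|_{L^\infty}\,\|\Delta_jv\|_{L^p}$ via H\"older, control $\|S_{j-2}u\|_{L^\infty}$ by the appropriate regularity hypothesis on $u$, and use Bernstein's inequality (Lemma \ref{l:bern}) on the band-limited $\Delta_jv$ to rewrite $\|\Delta_j v\|_{L^p}\le C\,2^{-j}\,\|\Delta_j\nabla v\|_{L^p}$, which accounts for the appearance of $\nabla v$ with exponent $s-1$ on the right-hand sides.

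For the first estimate, $\|S_{j-2}u\|_{L^\infty}\le C\,\|u\|_{L^\infty}$ uniformly in $j$ by a direct convolution bound, since $\chi$'s inverse Fourier transform has $L^1$ norm independent of $j$. Plugged into the recipe above, this gives
$$
2^{js}(1+j)^\alpha\,\|S_{j-2}u\,\Delta_jv\|_{L^p}\,\le\,C\,\|u\|_{L^\infty}\cdot 2^{j(s-1)}(1+j)^\alpha\,\|\Delta_j\nabla v\|_{L^p}\,,
$$
and taking the $\ell^r$ norm on both sides reproduces exactly $C\,\|u\|_{L^\infty}\,\|\nabla v\|_{B^{(s-1)+\alpha\log}_{p,r}}$, closing the first bound.

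For the second estimate with $t>0$, I would invoke Lemma \ref{l:log-S_j}(ii) applied to $u\in B^{-t+\beta\log}_{\infty,r_2}$ — its hypothesis $s<0$ is met by $-t<0$ — to produce a sequence $(c_j)_j\in\ell^{r_2}$ with $\|(c_j)\|_{\ell^{r_2}}\lesssim\|u\|_{B^{-t+\beta\log}_{\infty,r_2}}$ and $\|S_{j-2}u\|_{L^\infty}\le C\,c_j\,2^{jt}\,(1+j)^{-\beta}$. The recipe above then yields
$$
2^{j(s-t)}(1+j)^{\alpha+\beta}\,\|S_{j-2}u\,\Delta_jv\|_{L^p}\,\le\,C\,c_j\cdot\bigl(2^{j(s-1)}(1+j)^{\alpha}\,\|\Delta_j\nabla v\|_{L^p}\bigr)\,,
$$
whose right-hand side is the product of an $\ell^{r_2}$ sequence and an $\ell^{r_1}$ one; H\"older for sequences gives $\ell^{q_0}$-summability with $1/q_0=1/r_1+1/r_2$, and when $q_0<1$ the sequence embedding $\ell^{q_0}\hra\ell^1$ absorbs the excess, producing the exponent $q$ of the statement. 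The step I expect to be the main obstacle is the endpoint $t=0$, $\beta\le 0$, $r_2=+\infty$: Lemma \ref{l:log-S_j}(ii) no longer applies, so I must instead extract a uniform ($\ell^\infty$) control on $(1+j)^{\beta}\,\|S_{j-2}u\|_{L^\infty}$ along the lines of Lemma \ref{l:log-S_j}(iii), using critically the sign assumption $\beta\le 0$; once this uniform bound is secured, the remaining sequence-summability manipulations are routine and deliver the stated inequality.
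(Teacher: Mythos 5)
For the two displayed estimates your argument is exactly the paper's, only with the implicit steps written out: the generic term $S_{j-2}u\,\Delta_jv$ sits in a fixed annulus $2^j\wtilde{\mc{C}}$, Lemma \ref{l:log-B_ind} reduces everything to weighted $L^p$ bounds on these blocks, and those follow from H\"older, Bernstein (to pass to $\nabla v$ with exponent $s-1$), and either $\|S_{j-2}u\|_{L^\infty}\lesssim\|u\|_{L^\infty}$ or Lemma \ref{l:log-S_j}(ii) with $s=-t<0$; your use of H\"older for sequences and of $\ell^{q_0}\hra\ell^1$ when $1/r_1+1/r_2>1$ is also correct. The paper's proof is just a compressed version of this.

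The step that does not go through as you describe it is the endpoint $t=0$, $\beta\le0$, $r_2=+\infty$. The uniform bound you plan to ``extract'', namely $\sup_j\,(1+j)^{\beta}\,\|S_ju\|_{L^\infty}\lesssim\|u\|_{B^{0+\beta\log}_{\infty,\infty}}$, is not available under that hypothesis: from $\|\Delta_ku\|_{L^\infty}\le(1+k)^{-\beta}\,\|u\|_{B^{0+\beta\log}_{\infty,\infty}}$ you only get $\|S_ju\|_{L^\infty}\lesssim(1+j)^{1-\beta}\,\|u\|_{B^{0+\beta\log}_{\infty,\infty}}$, one full logarithm worse than needed, and this loss is genuine (for $\beta=0$ the claimed endpoint would assert that $T_u$ preserves $B^{s+\alpha\log}_{p,r_1}$ for every $u\in B^0_{\infty,\infty}$, which fails for lacunary $u$ whose partial sums $S_ju$ grow like $j$). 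What is true, and what Lemma \ref{l:log-S_j}(iii) actually provides, is the uniform control $\sup_j\,(1+j)^{\beta}\,\|S_ju\|_{L^\infty}\lesssim\|u\|_{B^{0+\beta\log}_{\infty,1}}$, i.e.\ under the stronger $\ell^1$-summability of the blocks of $u$; with that norm of $u$ on the right-hand side your scheme closes immediately (the resulting sequence is $\ell^{r_1}$, consistent with $1/q=\min\{1,1/r_1\}$). So the obstruction you flagged is real and is not removed by the sign condition $\beta\le0$ alone: the endpoint must be read with the third index $1$ on $u$, which is also the only reading compatible with the paper's own one-line appeal to Lemma \ref{l:log-S_j}.
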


\begin{proof}
As remarked above, the generic term $S_{j-2}u\,\Delta_jv$ is spectrally supported in the ring
$2^j\wtilde{\mc{C}}$, for some fixed ring $\wtilde{\mc{C}}$.
Hence, thanks to Lemma \ref{l:log-B_ind}, it's enough to estimate its $L^p$ norm. 

Applying Lemma \ref{l:log-S_j} gives us the conclusion.
\end{proof}

Let us now state some properties of the remainder operator.

\begin{thm} \label{t:log-r}
 Let $(s,t,\alpha,\beta)\in\R^4$ and $(p_1,p_2,r_1,r_2)\in[1,+\infty]^4$ be such that
$$
\frac{1}{p}\,:=\,\frac{1}{p_1}\,+\,\frac{1}{p_2}\,\leq\,1\qquad\mbox{ and }\qquad
\frac{1}{r}\,:=\,\frac{1}{r_1}\,+\,\frac{1}{r_2}\,\leq\,1\,.
$$
\begin{itemize}
 \item[(i)] If $s+t>0$, then there exists a constant $C>0$ such that, for  any
$(u,v)\in B^{s+\alpha\log}_{p_1,r_1}\times B^{t+\beta\log}_{p_2,r_2}$ we have
$$
\left\|R(u,v)\right\|_{B^{(s+t)+(\alpha+\beta)\log}_{p,r}}\;\leq\;C\,\|u\|_{B^{s+\alpha\log}_{p_1,r_1}}\,
\|v\|_{B^{t+\beta\log}_{p_2,r_2}}\,.
$$
\item[(ii)] If $s+t=0$, $\alpha+\beta\geq0$ and $r=1$, then there exists a $C>0$ such that the inequality
$$
\left\|R(u,v)\right\|_{B^{0+(\alpha+\beta)\log}_{p,\infty}}\;\leq\;C\,\|u\|_{B^{s+\alpha\log}_{p_1,r_1}}\,
\|v\|_{B^{t+\beta\log}_{p_2,r_2}}
$$
holds true for any 
$(u,v)\in B^{s+\alpha\log}_{p_1,r_1}\times B^{t+\beta\log}_{p_2,r_2}$.
\end{itemize}
\end{thm}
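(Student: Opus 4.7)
The plan is to follow the classical dyadic strategy and to reduce the statement to Lemma \ref{l:log-ball}, which is the logarithmic analogue (for ball-localized sequences) of Lemma \ref{l:log-B_ind}. The key observation is that, setting
$$
u_j\,:=\,\Delta_j u\,\sum_{|k-j|\leq 2}\Delta_k v\,,\qquad R(u,v)\,=\,\sum_{j\in\N}u_j\,,
$$
each block $u_j$ has spectrum contained in a ball $2^j\mc{B}$ rather than in a ring: for $|k-j|\leq 2$ the frequencies of $\Delta_j u$ and $\Delta_k v$ may cancel down to arbitrarily low wavenumbers, so no low-frequency gap is available. This is exactly why Lemma \ref{l:log-ball} (and not Lemma \ref{l:log-B_ind}) must be used, and this is what generates the positivity requirements on $s+t$ and, in the critical case, on $\alpha+\beta$.

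First I would apply H\"older's inequality in $L^p$ with $1/p=1/p_1+1/p_2$, getting
$$
\|u_j\|_{L^p}\,\leq\,\|\Delta_j u\|_{L^{p_1}}\sum_{|k-j|\leq 2}\|\Delta_k v\|_{L^{p_2}}\,.
$$
After multiplying by $2^{j(s+t)}(1+j)^{\alpha+\beta}$, I would redistribute the weights so that $u$ carries $2^{js}(1+j)^{\alpha}$ and $v$ carries $2^{kt}(1+k)^{\beta}$ for the indices $k$ appearing in the sum; this is legitimate up to a universal multiplicative constant, since $|k-j|\leq 2$ and the elementary inequality \eqref{est:frac_int} controls the ratio of the logarithmic weights. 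Consequently the sequence
$$
a_j\,:=\,2^{j(s+t)}\,(1+j)^{\alpha+\beta}\,\|u_j\|_{L^p}
$$
is dominated, term by term up to bounded index shifts, by the pointwise product of a sequence in $\ell^{r_1}$ (coming from $u$) and a sequence in $\ell^{r_2}$ (coming from $v$). A discrete H\"older inequality with $1/r=1/r_1+1/r_2$ then yields
$$
\bigl\|(a_j)_{j\in\N}\bigr\|_{\ell^r}\,\leq\,C\,\|u\|_{B^{s+\alpha\log}_{p_1,r_1}}\,\|v\|_{B^{t+\beta\log}_{p_2,r_2}}\,.
$$

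Both assertions now follow at once by invoking Lemma \ref{l:log-ball}. For part~(i), the hypothesis $s+t>0$ triggers the main part of the lemma and delivers $R(u,v)\in B^{(s+t)+(\alpha+\beta)\log}_{p,r}$ with the announced bound. For part~(ii), the critical balance $s+t=0$ with $\alpha+\beta\geq 0$ forces the use of the endpoint part of Lemma \ref{l:log-ball}: that statement produces only an $\ell^\infty$-type conclusion (hence the target space $B^{0+(\alpha+\beta)\log}_{p,\infty}$) and requires $\ell^1$-summability of the input, which is precisely why we must assume $r=1$. I expect no genuine obstacle beyond this bookkeeping: all the analytic work has already been packaged in Lemma \ref{l:log-ball}, and the only real point is to recognise that its two regimes match, one to one, the two regimes of the statement.
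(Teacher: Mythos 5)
Your proof is correct and follows essentially the same route as the paper: decompose $R(u,v)$ into the blocks $\sum_{|k-j|\leq2}\Delta_ju\,\Delta_kv$, which are spectrally supported in balls $2^j\mc{B}$, estimate them via H\"older's inequality (in $L^p$ and then in $\ell^r$ with the weight redistribution controlled by \eqref{est:frac_int}), and conclude with the two regimes of Lemma \ref{l:log-ball}. In fact you spell out the bookkeeping that the paper leaves implicit, so there is nothing to add.
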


\begin{proof}
 We can write $R(u,v)\,=\,\sum_{j}R_j$, where we have set
$$
R_j\,:=\,\sum_{|h-j|\leq2}\Delta_ju\,\Delta_hv\,.
$$
As already pointed out, each $R_j$ is spectrally localized on a ball of radius proportional to
$2^j$. Hence, from Lemma \ref{l:log-ball} and H\"older's inequality we immediately infer the first estimate.

In the second case, we apply the second part of Lemma \ref{l:log-ball}. As a matter of fact, the following
inequality holds true for all $k\geq0$:
$$
(1+k)^{\alpha+\beta}\,\|\Delta_kR(u,v)\|_{L^p}\;\leq\;C\,\sum_{j\geq k-n_0}(1+j)^{\alpha}\,\left\|\Delta_ju\right\|_{L^{p_1}}\,
(1+j)^\beta\,\left\|\Delta_jv\right\|_{L^{p_2}}\,,
$$
where, for simplicity, instead of the full $R_j$, we have considered only the term $\Delta_ju\,\Delta_jv$,
the other ones being similar.

The theorem is completely proved.
\end{proof}

We conclude this part with a result on left composition by smooth functions: it generalizes Proposition 4 of \cite{D2010}
to the logarithmic setting. As the proof is quite technical, we postpone it to the Appendix.

\begin{thm} \label{t:comp}
 Let $I\subset\R$ be an open interval and  $F:\,I\,\longrightarrow\,\R$ a smooth function.
Fix a compact subset $J\subset I$, $(p,r)\in[1,+\infty]^2$ and $(s,\alpha)\in\R^2$ such that $s>0$,
or $s=0$, $\alpha>1$ and $r=+\infty$.

Then there exists a constant $C>0$ such that, for all functions $u$ which are supported in $J$ and with
$\nabla u\in B^{(s-1)+\alpha\log}_{p,r}$, one has $\nabla(F\circ u)\in B^{(s-1)+\alpha\log}_{p,r}$ and
$$
\left\|\nabla\left(F\circ  u\right)\right\|_{B^{(s-1)+\alpha\log}_{p,r}}\,\leq\,C
\left\|\nabla u\right\|_{B^{(s-1)+\alpha\log}_{p,r}}\,.
$$
\end{thm}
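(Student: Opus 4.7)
The plan is to prove the estimate via a Meyer-type telescoping decomposition of $F\circ u$, combined with a quantitative spectral localization of the dyadic pieces. Since $u$ takes values in the compact set $J\subset I$, a routine check shows that $S_j u+\tau\Delta_j u$ remains in a slightly enlarged compact $\wtilde J\Subset I$, uniformly in $j\in\N$ and $\tau\in[0,1]$; hence all the compositions with $F$ and its derivatives below are well-defined and bounded.

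First I would write
\[
F\circ u\,=\,F(S_0 u)\,+\,\sum_{j\geq 0}v_j\,,\qquad v_j\,:=\,F(S_{j+1}u)-F(S_j u)\,=\,m_j\,\Delta_j u\,,
\]
where $m_j:=\int_0^1 F'(S_j u+\tau\Delta_j u)\,d\tau$ by the fundamental theorem of calculus. Bernstein's Lemma \ref{l:bern} applied to $S_j u$ and $\Delta_j u$, combined with the uniform bounds $\|F^{(k)}\|_{L^\infty(\wtilde J)}\leq C_k$ and the chain rule, then yields $\|\nabla^\ell m_j\|_{L^\infty}\leq C_\ell\,2^{j\ell}$ for every $\ell\in\N$. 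This is the precise statement that $m_j$ is smooth on length-scale $2^{-j}$.

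The central step is a quantitative spectral decay estimate: whenever $g$ is spectrally supported in the annulus $2^j\mc{C}$, one has
\[
\|\Delta_k(m_j\,g)\|_{L^p}\,\leq\,C_N\,2^{-N(k-j)_+}\,\|g\|_{L^p}\,,\qquad N\in\N\,,
\]
which is proved by $N$-fold integration by parts on the convolution kernel of $\Delta_k$, exploiting the derivative bounds on $m_j$. Applied with $g=\Delta_j u$, and combined with Bernstein's inequality to handle the gradient and to convert $\|\Delta_j u\|_{L^p}$ into $2^{-j}\|\Delta_j\nabla u\|_{L^p}$ (for $j\geq 1$), this gives
\[
2^{k(s-1)}(1+k)^\alpha\,\|\Delta_k\nabla v_j\|_{L^p}\,\leq\,\theta_{k-j}\,\delta_j\,,\qquad
\delta_j\,:=\,2^{j(s-1)}(1+j)^\alpha\,\|\Delta_j\nabla u\|_{L^p}\,,
\]
with a kernel $\theta_h$ that decays like $2^{hs}(1+|h|)^{|\alpha|}$ for $h\leq 0$ and like $2^{h(s-N)}(1+|h|)^{|\alpha|}$ for $h>0$. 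For $s>0$ and $N>s$, the sequence $\theta$ belongs to $\ell^1(\Z)$, so Young's inequality for convolutions delivers the required $\ell^r$-bound on $\bigl(2^{k(s-1)}(1+k)^\alpha\|\Delta_k\nabla(F\circ u)\|_{L^p}\bigr)_k$; the low-frequency contribution $\nabla F(S_0 u)=F'(S_0 u)\,\Delta_0\nabla u$ is handled separately via $\|F'(S_0 u)\|_{L^\infty}\|\Delta_0\nabla u\|_{L^p}$.

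The main obstacle is the quantitative spectral decay estimate displayed above: it is the rigorous form of the heuristic that multiplication by a function smooth on scale $2^{-j}$ should not spread frequencies beyond $2^j$. The endpoint case $s=0,\ \alpha>1,\ r=+\infty$ is more delicate, since there the negative-$h$ part of $\theta$ fails to lie in $\ell^1(\Z)$; one then exploits the fact that only an $\ell^\infty$-bound is required on the left, and tracks the logarithmic ratio $(1+k)^\alpha/(1+j)^\alpha\leq(1+|k-j|)^{|\alpha|}$ carefully, using the summability of $(1+|h|)^{-\alpha}$, which is precisely where the hypothesis $\alpha>1$ enters.
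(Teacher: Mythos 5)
Your treatment of the main case $s>0$ is correct and follows essentially the same route as the paper's appendix: Meyer's telescoping $F(S_{j+1}u)-F(S_ju)=m_j\,\Delta_ju$, the bounds $\|\d^\nu m_j\|_{L^\infty}\leq C_\nu 2^{j|\nu|}$ (Lemma \ref{l:m_j}), and an almost-orthogonality summation. The only packaging difference is in the summation step: you prove a decay estimate $\|\Delta_k(m_jg)\|_{L^p}\leq C_N2^{-N(k-j)_+}\|g\|_{L^p}$ and conclude by Young's inequality (this estimate does hold: for $k\geq j$, $k\geq1$, write $\Delta_k=2^{-2kN}\,\wtilde{\vphi}_N(2^{-k}D)\,(-\Delta)^N$ with $\wtilde{\vphi}_N(\xi)=|\xi|^{-2N}\vphi(\xi)$, then use Leibniz and Bernstein), whereas the paper factors the same mechanism through Lemma \ref{l:deriv}, which needs only derivatives of order $\leq[s]+1$. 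This is a cosmetic difference, not a different method.

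Two points need repair. First, the ``routine check'' that $S_ju+\tau\Delta_ju$ stays in a compact $\wtilde{J}\Subset I$ is false in general: $S_j$ is convolution with a kernel which is not nonnegative, so $S_ju$ can overshoot the convex hull of the values of $u$ by an amount proportional to the diameter of $J$, possibly leaving $I$. The standard fix --- implicitly presupposed by Lemma \ref{l:m_j}, which is stated for $g$ smooth on all of $\R^2$ --- is to modify $F$ outside a neighbourhood of $J$ into a globally smooth function with bounded derivatives; since $u$ takes values in $J$, $F\circ u$ is unchanged, and constants then depend only on $F$, $J$, $\|u\|_{L^\infty}$. Relatedly, arrange the bookkeeping so that no block $\|\Delta_0u\|_{L^p}$ ever appears (it is not controlled by $\|\nabla u\|_{B^{(s-1)+\alpha\log}_{p,r}}$); starting the telescoping from $F(S_2u)$ and using Remark \ref{r:u-S_0u}, as in the appendix, does this.

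Second, and more substantially, the endpoint $s=0$, $\alpha>1$, $r=+\infty$ is not closed by the bookkeeping you describe. For $j\geq k$ there is no decay in $j-k$ (the frequency-$2^k$ content of $m_j\Delta_ju$ is in general of full size $\|\Delta_ju\|_{L^p}$), so at $s=0$ the best your kernel gives, with $\delta_j:=2^{-j}(1+j)^\alpha\|\Delta_j\nabla u\|_{L^p}$, is
\begin{equation*}
2^{-k}(1+k)^\alpha\sum_{j\geq k}\bigl\|\Delta_k\nabla\bigl(m_j\Delta_ju\bigr)\bigr\|_{L^p}\;\leq\;
C\,\|\delta\|_{\ell^\infty}\,(1+k)^\alpha\sum_{j\geq k}(1+j)^{-\alpha}\;\leq\;C'\,(1+k)\,\|\delta\|_{\ell^\infty}\,,
\end{equation*}
that is, a loss of one logarithm: as written your argument yields only $\nabla(F\circ u)\in B^{-1+(\alpha-1)\log}_{p,\infty}$. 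The hypothesis $\alpha>1$ makes the series converge but does not cancel the prefactor $(1+k)^\alpha$. So either supply a genuinely different argument at this endpoint or accept the logarithmic loss there; note that the appendix itself is terse on this point (its Lemma \ref{l:deriv} covers $s=0$ only for $\ell^1$ data), so this endpoint is the one place where your write-up, like the paper's, does not stand on its own.
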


\begin{rem} \label{r:comp}
We remark that this statement differs from the classical one (see Chapter 2 of \cite{B-C-D} for instance), as we are looking at the regularity
of the gradient of $F\circ u$, rather than at the regularity of $F\circ u$ itself. In fact, this little difference is more adapted to
our case.

Note also that other extensions (in the same spirit of those in \cite{B-C-D}, Paragraph 2.8.2)
of the previous theorem, under finer assumptions on the function $f$, are possible, but they go beyond the aims of the present paper.
\end{rem}

\subsection{Zygmund spaces} \label{ss:Zygmund}

Littlewood-Paley decomposition provides us also with a description of Zygmund and log-Zygmund classes. Before entering into the details, let us recall the ``classical'' case when $p=+\infty$.

\begin{defin} \label{d:LZ}
 A function $g\in L^\infty(\R^d)$ is said to be \emph{log-Zygmund} continuous, and we write $g\in LZ(\R^d)$, if the quantity
$$
|g|_{LZ,\infty}\,:=\,\sup_{z,y\in\R^d,\,0<|y|<1}
\left(\frac{\left|g(z+y)\,+\,g(z-y)\,-\,2\,g(z)\right|}{|y|\,\log\left(1\,+\,\frac{1}{|y|}\right)}\right)\,<\,+\infty\,.
$$
We set $\|g\|_{LZ}\,:=\,\|g\|_{L^\infty}\,+\,|g|_{LZ,\infty}$.

The space $Z(\R^d)$ of \emph{Zygmund} continuous functions is defined instead by the condition
$$
|g|_{Z,\infty}\,:=\,\sup_{z,y\in\R^d,\,0<|y|<1}
\left(\frac{\left|g(z+y)\,+\,g(z-y)\,-\,2\,g(z)\right|}{|y|}\right)\,<\,+\infty\,,
$$
and, analogously, we set $\|g\|_{Z}\,:=\,\|g\|_{L^\infty}\,+\,|g|_{Z,\infty}$.
\end{defin}

More in general, one can define Zygmund classes based on $L^p$ conditions, as follows.

\begin{defin} \label{def:zyg_p}
Let $p\in[1,+\infty]$. We define the space $\mc{Z}_p(\R^d)$ as the set of $f\in L^p(\R^d)$ such that there exists a constant $C>0$ for which
$$
\bigl\|f(\,\cdot\,+y)\,+\,f(\,\cdot\,-y)\,-\,2\,f(\,\cdot\,)\bigr\|_{L^p(\R^d)}\,\leq\,C\,|y|
$$
for all $y\in\R^d$ with $|y|<1$. We denote by $|f|_{\mc{Z}_p}$ the smallest constant $C$ for which the previous inequality is true,
and we set $\|f\|_{\mc{Z}_p}\,:=\,\|f\|_{L^p}+|f|_{\mc{Z}_p}$.

Similarly, the space $\mc{LZ}_p$ is the set of $f\in L^p(\R^d)$ such that, for some constant $C>0$,
$$
\bigl\|f(\,\cdot\,+y)\,+\,f(\,\cdot\,-y)\,-\,2\,f(\,\cdot\,)\bigr\|_{L^p(\R^d)}\,\leq\,C\;|y|\;\log\left(1\,+\,\frac{1}{|y|}\right)
$$
for all $y\in\R^d$ with $|y|<1$. We then set $|f|_{\mc{LZ}_p}$ the smallest constant $C$ for which the previous inequality is true,
and $\|f\|_{\mc{LZ}_p}\,:=\,\|f\|_{L^p}+|f|_{\mc{LZ}_p}$.
\end{defin}

Obviously, for any $p\in[1,+\infty]$, one has $\mc{Z}_p\,\subset\,\mc{LZ}_p$.

Let us recall that $Z\equiv B^1_{\infty,\infty}$ (see e.g. \cite{Ch1995}, Chapter 2, for the proof), while the space $LZ$
coincides with the logarithmic Besov space $B^{1-\log}_{\infty,\infty}$ (see for instance \cite{C-DS-F-M_tl}, Section 3).
Exactly as for the $L^\infty$ instance, the following proposition holds true (for the proof, see e.g. \cite{F-Z}, Section 2).
\begin{prop} \label{p:zygm_p}
For any $p\in[1,+\infty]$, the classes $\mc{Z}_p(\R^d)$ and $\mc{LZ}_p(\R^d)$ coincide, respectively, with the
Besov spaces $B^1_{p,\infty}(\R^d)$ and $B^{1-\log}_{p,\infty}(\R^d)$.
\end{prop}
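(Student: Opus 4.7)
The plan is to prove each of the two set equalities by establishing the two inclusions separately and in parallel, since the arguments for the Zygmund and log-Zygmund cases differ only by careful bookkeeping of a logarithmic factor. Throughout, I would work with the Littlewood-Paley decomposition already fixed in Subsection \ref{ss:L-P}, recalling that the $B^{s+\alpha\log}_{p,\infty}$ norm is $\sup_j 2^{js}(1+j)^\alpha\|\Delta_j f\|_{L^p}$.

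For the inclusion $B^1_{p,\infty}\subset\mc{Z}_p$ (resp.\ $B^{1-\log}_{p,\infty}\subset\mc{LZ}_p$), I would decompose $f=\sum_j\Delta_j f$ and estimate the second difference $\Delta^2_y f(x):=f(x+y)+f(x-y)-2f(x)$ dyadically. For ``low'' frequencies $2^j\le 1/|y|$, a second-order Taylor expansion gives $\|\Delta^2_y\Delta_j f\|_{L^p}\le C|y|^2\|\nabla^2\Delta_j f\|_{L^p}$, and Bernstein (Lemma \ref{l:bern}) upgrades this to $C|y|^2 2^{2j}\|\Delta_j f\|_{L^p}$. For ``high'' frequencies $2^j>1/|y|$, the trivial bound $\|\Delta^2_y\Delta_j f\|_{L^p}\le 4\|\Delta_j f\|_{L^p}$ suffices. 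Summing the two geometric series against the Besov hypothesis yields $\|\Delta^2_y f\|_{L^p}\le C|y|$ in the Zygmund case, and $\|\Delta^2_y f\|_{L^p}\le C|y|\log(1+1/|y|)$ in the log-Zygmund case, where the logarithm emerges precisely because the cutoff level $J\sim\log_2(1/|y|)$ produces a factor $(1+J)$ in the partial sum of $2^j(1+j)$.

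For the reverse inclusion $\mc{Z}_p\subset B^1_{p,\infty}$ (resp.\ $\mc{LZ}_p\subset B^{1-\log}_{p,\infty}$), the key observation is that for $j\ge 1$ the dyadic block $\Delta_j f=K_j*f$ has kernel $K_j(y)=2^{jd}K(2^jy)$ with $K=\mc{F}^{-1}\vphi$ Schwartz, even (since $\vphi$ is radial), and with $\int K=\vphi(0)=0$. These two facts allow me to write
$$
\Delta_j f(x)\,=\,\frac{1}{2}\int K_j(y)\,\Delta^2_y f(x)\,dy\,,
$$
so that $\|\Delta_j f\|_{L^p}$ is controlled, via Minkowski, by $\int|K_j(y)|\,\|\Delta^2_y f\|_{L^p}\,dy$. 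Splitting this integral into $|y|<1$ (use the Zygmund/log-Zygmund hypothesis) and $|y|\ge 1$ (use $\|f\|_{L^p}$ and the Schwartz decay of $K_j$), the change of variables $z=2^jy$ yields $\int|K_j(y)||y|\,dy=C\,2^{-j}$ in the Zygmund case, and $\int|K_j(y)||y|\log(1+1/|y|)\,dy\le C\,2^{-j}(1+j)$ in the log-Zygmund case; the block $\Delta_0$ is handled by $\|\Delta_0 f\|_{L^p}\le C\|f\|_{L^p}$. This gives exactly the desired $B^1_{p,\infty}$ and $B^{1-\log}_{p,\infty}$ bounds.

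The only delicate step is the logarithmic bookkeeping in the inclusion $\mc{LZ}_p\subset B^{1-\log}_{p,\infty}$: one must check that after the scaling $z=2^jy$, the factor $\log(1+1/|y|)=\log(1+2^j/|z|)$ contributes at most $C(1+j)$ uniformly, which is done by splitting $|z|\le 1$ and $|z|>1$ and absorbing the singular piece $\log(1/|z|)$ into the Schwartz decay of $K$. Everything else is routine, and the low-frequency block $\Delta_0$ causes no trouble because $\chi$ is a Schwartz multiplier on every $L^p$.
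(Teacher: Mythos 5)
Your argument is correct, and it is essentially the standard proof of this characterization: the paper itself gives no proof, referring instead to \cite{F-Z} (and, for $p=+\infty$, to \cite{Ch1995} and \cite{C-DS-F-M_tl}), where exactly your two-sided scheme is used --- Taylor expansion plus Bernstein on low frequencies and the trivial bound on high frequencies for $B^{1(-\log)}_{p,\infty}\subset\mc{Z}_p\,(\mc{LZ}_p)$, and the representation $\Delta_jf=\frac12\int K_j(y)\,\bigl(f(\cdot+y)+f(\cdot-y)-2f(\cdot)\bigr)\,dy$, valid because $K=\mc{F}^{-1}\vphi$ is even with zero mean, for the converse. The only details left implicit are routine: the $L^p$ part of the Zygmund norm comes from $B^{1-\log}_{p,\infty}\hra L^p$ (summable blocks), and after the rescaling $z=2^jy$ the logarithm is handled most cleanly by $\log(1+2^j/|z|)\leq\log(1+2^j)+\log(1+1/|z|)$, the second term being integrable against $|K(z)|\,|z|$ without invoking the Schwartz decay on $|z|\leq1$.
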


Then, keeping in mind Theorems \ref{t:log-pp} and \ref{t:log-r} about paraproduct and remainder operators, we immediately infer
the following result.

\begin{coroll} \label{c:z-algebra}
For any $p\in[1,+\infty]$, the spaces $L^\infty\cap\mc{Z}_p$ and $L^\infty\cap\mc{LZ}_p$ are algebras.
\end{coroll}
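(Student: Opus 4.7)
The plan is to combine Proposition \ref{p:zygm_p}, which identifies $\mc{Z}_p\equiv B^1_{p,\infty}$ and $\mc{LZ}_p\equiv B^{1-\log}_{p,\infty}$, with Bony's decomposition \eqref{eq:bony}. Given $u,v\in L^\infty\cap\mc{Z}_p$ (respectively $L^\infty\cap\mc{LZ}_p$), I would write $uv=T_uv+T_vu+R(u,v)$ and estimate each piece separately in $B^1_{p,\infty}$ (respectively $B^{1-\log}_{p,\infty}$). The control $\|uv\|_{L^\infty}\leq\|u\|_{L^\infty}\|v\|_{L^\infty}$ is obvious, so the only task is the Besov regularity.

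For the two paraproducts, I would invoke the first inequality of Theorem \ref{t:log-pp}, with parameters $s=1$, $r=\infty$, and $\alpha=0$ in the Zygmund case or $\alpha=-1$ in the log-Zygmund case. This yields
$$
\|T_uv\|_{B^{1+\alpha\log}_{p,\infty}}\,\leq\,C\,\|u\|_{L^\infty}\,\|\nabla v\|_{B^{0+\alpha\log}_{p,\infty}}\,\leq\,C\,\|u\|_{L^\infty}\,\|v\|_{B^{1+\alpha\log}_{p,\infty}}\,,
$$
where Proposition \ref{p:log-mult} (applied to $\d_j$, which is a $S^1$-multiplier) gives the second inequality. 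Swapping the roles of $u$ and $v$ gives the analogous control of $T_vu$ by $\|v\|_{L^\infty}\|u\|_{B^{1+\alpha\log}_{p,\infty}}$.

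For the remainder, I would first observe the trivial embedding $L^\infty\hookrightarrow B^0_{\infty,\infty}$, which follows from $\|\Delta_ju\|_{L^\infty}\lesssim\|u\|_{L^\infty}$ uniformly in $j$ (convolution with an $L^1$ kernel). Then Theorem \ref{t:log-r}~(i) applied with $(s,\alpha,p_1,r_1)=(0,0,\infty,\infty)$ and $(t,\beta,p_2,r_2)=(1,\alpha,p,\infty)$ gives
$$
\|R(u,v)\|_{B^{1+\alpha\log}_{p,\infty}}\,\leq\,C\,\|u\|_{L^\infty}\,\|v\|_{B^{1+\alpha\log}_{p,\infty}}\,,
$$
since $s+t=1>0$, $1/p_1+1/p_2=1/p$ and $1/r_1+1/r_2=0$. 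The requirement $s+t>0$ is exactly what makes this work for every $p\in[1,+\infty]$; this is also the only subtle point of the proof, since using the Zygmund hypothesis on both factors (rather than the $L^\infty$ one on at least one of them) would force the off-diagonal condition $1/p_1+1/p_2\leq1$, i.e. $p\geq2$.

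No step is genuinely hard; the key is to exploit the $L^\infty$ hypothesis rather than the Zygmund one on one of the two factors inside $R$, so that $r=\infty$ is preserved while $s+t>0$. Adding the three estimates yields
$$
\|uv\|_{B^{1+\alpha\log}_{p,\infty}}\,\leq\,C\,\bigl(\|u\|_{L^\infty}\|v\|_{B^{1+\alpha\log}_{p,\infty}}\,+\,\|v\|_{L^\infty}\|u\|_{B^{1+\alpha\log}_{p,\infty}}\bigr)\,,
$$
which, together with the obvious $L^\infty$ bound on the product, establishes the algebra property in both the Zygmund ($\alpha=0$) and log-Zygmund ($\alpha=-1$) cases.
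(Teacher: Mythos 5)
Your proposal is correct and follows essentially the same route the paper intends: Corollary \ref{c:z-algebra} is stated there as an immediate consequence of the identification in Proposition \ref{p:zygm_p} together with Theorems \ref{t:log-pp} and \ref{t:log-r}, i.e.\ Bony's decomposition \eqref{eq:bony} with the $L^\infty$ information placed on one factor of each paraproduct and of the remainder, exactly as you do. Your parameter choices in both theorems are admissible for every $p\in[1,+\infty]$, so nothing is missing.
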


Let us also recall that, in the classical $L^\infty$ instance, the Zygmund space $Z$ is continuously embedded in the space of
log-Lipschitz functions, and the analogous holds true also for the log-Zygmund class $LZ$ (see \cite{C-DS-F-M_tl},
Section 3).
Next lemma generalize this property to the $L^p$ setting: the proof is analogous to the classical one (see e.g. Proposition
2.107 of \cite{B-C-D}, or Lemma 3.13 of \cite{C-DS-F-M_tl} for the logarithmic case), so we omit it.

\begin{lemma} \label{l:z-ll}
Let $f\in\mc{Z}_p$ or $f\in\mc{LZ}_p$, for some $p\in[1,+\infty]$.

Then, setting $\ell=0$ in the former instance and $\ell=1$ in the latter one, there exists a constant $C>0$ such that, for
all $y\in\R^N$, $|y|<1$, one has
$$
\left\|f(\,\cdot\,+y)\,-\,f(\,\cdot\,)\right\|_{L^p}\,\leq\,C\,|y|\,\log^{1+\ell}\left(1\,+\,\frac{1}{|y|}\right)\,.
$$
\end{lemma}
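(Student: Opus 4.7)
The plan is to use the Littlewood-Paley characterizations of $\mc{Z}_p$ and $\mc{LZ}_p$ given by Proposition \ref{p:zygm_p}, writing $\mc{Z}_p = B^1_{p,\infty}$ and $\mc{LZ}_p = B^{1-\log}_{p,\infty}$, so that uniformly in $k\in\N$ one has $\|\Delta_k f\|_{L^p}\,\lesssim\,2^{-k}$ in the Zygmund case, and $\|\Delta_k f\|_{L^p}\,\lesssim\,2^{-k}(1+k)$ in the log-Zygmund case. Then, using the dyadic decomposition $f=\sum_{k\geq 0}\Delta_k f$, I would split the estimate for $\|f(\cdot+y)-f(\cdot)\|_{L^p}$ into low and high frequencies at a threshold $N\in\N$ to be chosen later as a function of $|y|$.

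For the high frequencies $k>N$, I would apply the triangle inequality plainly, to get
$$
\sum_{k>N}\bigl\|(\Delta_k f)(\,\cdot\,+y)\,-\,(\Delta_k f)(\,\cdot\,)\bigr\|_{L^p}\,\leq\,2\sum_{k>N}\|\Delta_k f\|_{L^p}\,,
$$
which in the Zygmund case gives something of order $2^{-N}$, and in the log-Zygmund case something of order $2^{-N}(1+N)$. For the low frequencies $k\leq N$, I would use the fundamental theorem of calculus along with Bernstein's inequality (Lemma \ref{l:bern}) applied to the spectrally localized pieces $\Delta_k f$, to write
$$
\bigl\|(\Delta_k f)(\,\cdot\,+y)\,-\,(\Delta_k f)(\,\cdot\,)\bigr\|_{L^p}\,\leq\,|y|\,\|\nabla\Delta_k f\|_{L^p}\,\leq\,C\,|y|\,2^k\,\|\Delta_k f\|_{L^p}\,.
$$
Summing in $k\leq N$ and inserting the bounds on $\|\Delta_k f\|_{L^p}$, one obtains a contribution of order $|y|(1+N)$ for $f\in\mc{Z}_p$ and of order $|y|(1+N)^2$ for $f\in\mc{LZ}_p$.

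Finally, I would optimize by choosing $N\in\N$ so that $2^{-N}\sim|y|$, i.e.\ $N\sim\log(1+1/|y|)$ (which is non-negative thanks to $|y|<1$). In the Zygmund case this gives both low- and high-frequency contributions of order $|y|\log(1+1/|y|)$, and in the log-Zygmund case of order $|y|\log^2(1+1/|y|)$, which matches precisely $|y|\log^{1+\ell}(1+1/|y|)$ with $\ell=0,1$ respectively. There is no real obstacle here: the only delicate point is the careful bookkeeping at the threshold $k=N$ and the trivial remark that the $k=0$ block, whose spectrum is a ball rather than a ring, nonetheless satisfies the same Bernstein-type inequality $\|\nabla\Delta_0 f\|_{L^p}\leq C\|\Delta_0 f\|_{L^p}$, so it can be absorbed in the low-frequency sum without affecting the final bound.
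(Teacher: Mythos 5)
Your argument is correct and is precisely the standard Littlewood--Paley proof that the paper itself points to (Proposition 2.107 of \cite{B-C-D} and Lemma 3.13 of \cite{C-DS-F-M_tl}) rather than reproducing: split $f=\sum_k\Delta_kf$ at a dyadic threshold $N$ with $2^{-N}\sim|y|$, bound high frequencies by the triangle inequality and low frequencies by the mean-value theorem together with Bernstein's inequality, using $\|\Delta_kf\|_{L^p}\lesssim 2^{-k}(1+k)^{\ell}$ from Proposition \ref{p:zygm_p}. The bookkeeping you carry out gives exactly the stated $|y|\log^{1+\ell}(1+1/|y|)$ bound, and your remark about the $\Delta_0$ block being spectrally supported in a ball is the right way to see that it poses no extra difficulty.
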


\subsubsection{The case $d=1$}

From now on, we will restrict our analysis to the $1$-dimensional case, which is definetely the only one we are interested in.

Thanks to the characterization provided by Proposition \ref{p:zygm_p}, we get the following property.
\begin{coroll} \label{c:z-bound}
Let $d=1$ and fix a $p\in\,]1,+\infty]$. 

Then the spaces $\mc{Z}_p$ and $\mc{LZ}_p$ are algebras continuously embedded in $L^\infty$.
\end{coroll}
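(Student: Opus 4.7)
The plan is to reduce everything to the Besov space machinery already set up in the paper. By Proposition \ref{p:zygm_p} I can identify $\mc{Z}_p \equiv B^1_{p,\infty}(\R)$ and $\mc{LZ}_p \equiv B^{1-\log}_{p,\infty}(\R)$, so the two statements become embedding/algebra statements for logarithmic Besov spaces in dimension $d=1$.

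For the embedding into $L^\infty$, I would invoke the third case of Proposition \ref{p:log-emb} with $d=1$, $p_1=p$, $r_1=\infty$, $s_1=1$, and target parameters $p_2=\infty$, $r_2=1$, $s_2=0$ (taking $\alpha_1=0$ in the Zygmund case and $\alpha_1=-1$ in the log-Zygmund case). The condition $s_2 < s_1 - d(1/p_1 - 1/p_2)$ becomes exactly $0 < 1 - 1/p$, which holds precisely because $p>1$ (this is where the restriction $p>1$ enters, and also why $d=1$ matters; with $d\geq 2$ one would need $p>d$). Note that the strict inequality is crucial: it lets the factor $2^{j\tilde s}$ with $\tilde s<0$ absorb any polynomial logarithmic weight, so the possible $\alpha_1=-1$ is harmless. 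This yields continuous embeddings
\[
B^1_{p,\infty}(\R)\;\hookrightarrow\;B^0_{\infty,1}(\R)\qquad\text{and}\qquad B^{1-\log}_{p,\infty}(\R)\;\hookrightarrow\;B^0_{\infty,1}(\R).
\]

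Next I would observe the elementary fact that $B^0_{\infty,1}(\R)\hookrightarrow L^\infty(\R)$: for any $u$ in the target space, the series $\sum_{j\geq 0}\Delta_j u$ converges absolutely in $L^\infty$, with $\|u\|_{L^\infty}\leq \sum_{j\geq 0}\|\Delta_j u\|_{L^\infty}=\|u\|_{B^0_{\infty,1}}$. Composing, both $\mc{Z}_p$ and $\mc{LZ}_p$ are continuously embedded in $L^\infty$.

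Finally, for the algebra property, I simply note that the previous embedding yields $L^\infty\cap\mc{Z}_p=\mc{Z}_p$ and $L^\infty\cap\mc{LZ}_p=\mc{LZ}_p$, so Corollary \ref{c:z-algebra} applies directly. There is no real obstacle here: the content of the statement is genuinely that of a one-dimensional Sobolev-type embedding, and the main subtlety is simply to notice that the strict-inequality case of Proposition \ref{p:log-emb} kills the logarithmic defect for free, which is why the same proof covers $\mc{Z}_p$ and $\mc{LZ}_p$ at once.
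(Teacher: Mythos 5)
Your proof is correct and follows essentially the same route as the paper: both reduce the algebra property to Corollary \ref{c:z-algebra} and obtain the $L^\infty$ embedding from the third case of Proposition \ref{p:log-emb}, using $1-1/p>0$ (the paper lands in $B^s_{\infty,1}$ for some $0<s<1-1/p$ rather than $B^0_{\infty,1}$, and treats only the log-Zygmund class since it contains the Zygmund one, but these are inessential differences).
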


\begin{proof}
In view of Corollary \ref{c:z-algebra}, it's enough to prove the embedding in $L^\infty$, of course just for the logarithmic class.

As $1-1/p\,>\,0$, from Proposition \ref{p:log-emb} we infer $B^{1-\log}_{p,\infty}\,\hra\,B^s_{\infty,1}$, for some
$0<s<1-1/p$ (in fact, the loss is logarithmic), and this space is clearly embedded in $L^\infty$.
\end{proof}

\begin{rem} \label{r:log-bound}
Let us stress that the previous statement is not true for $p=1$.
\end{rem}


Now, given a Zygmund function, we can smooth it out by convolution.
So, let us fix a $p\in[1,+\infty]$ and take a $f\in\mc{Z}_p(\R)$ or $f\in\mc{LZ}_p(\R)$.

Given an even function $\rho\in\mc{C}^\infty_0(\mbb{R})$, $0\leq\rho\leq1$, whose support is contained in the interval $[-1,1]$ and
such that $\int\rho(t)\,dt=1$, we define the mollifier kernel
$$
\rho_\veps(t)\,:=\,\frac{1}{\veps}\,\,\rho\!\left(\frac{t}{\veps}\right)\qquad\qquad\forall\,\veps\in\,]0,1]\,.
$$

Then, for all $\veps\in\,]0,1]$ we set
\begin{equation} \label{eq:f_e}
f_\veps(t)\,:=\,\left(\rho_\veps\,*\,f\right)(t)\,=\,\int_{\mbb{R}_s}\rho_{\veps}(t-s)\,f(s)\,ds\,.
\end{equation}

Let us state some properties about the family of functions we obtain in this way: the following proposition
generalizes the approximation results given in \cite{C-DS-F-M_tl} and \cite{C-DS-F-M_wp} in the $L^\infty$ instance.

\begin{prop} \label{p:z-approx}
Let $p\in[1,+\infty]$ and $f$ belong to $\mc{Z}_p$ or $\mc{LZ}_p$.

Then $\left(f_\veps\right)_\veps$ is a bounded family in $\mc{Z}_p$ or $\mc{LZ}_p$ respectively.

Moreover, there exists a constant $C>0$, depending only on $|f|_{\mc{Z}_p}$ or $|f|_{\mc{LZ}_p}$, such that the following inequalities
hold true for all $\veps\in\,]0,1]$:
\begin{eqnarray}
\left\|f_\veps\,-\,f\right\|_{L^p} & \leq & C\,\,\veps\;\log^\ell\left(1+\frac{1}{\veps}\right)  \label{est:f_e-f} \\
\left\|\d_tf_\veps\right\|_{L^p} & \leq & C\,\log^{1+\ell}\left(1+\frac{1}{\veps}\right) \label{est:d_t-f} \\
\left\|\d^2_tf_\veps(t)\right\|_{L^p} & \leq & C\,\,\frac{1}{\veps}\;\log^\ell\left(1+\frac{1}{\veps}\right) \,, \label{est:d_tt-f}
\end{eqnarray}
where $\ell=0$ or $1$ if $f\in\mc{Z}_p$ or $f\in\mc{LZ}_p$ respectively.
\end{prop}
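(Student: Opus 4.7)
My approach will be to derive all four claims from two elementary ingredients: Young's convolution inequality, and a symmetrization trick that exploits the parity of $\rho$ and of its derivatives.

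First I would dispose of the \emph{boundedness} of $(f_\veps)$ in $\mc{Z}_p$ (resp.\ $\mc{LZ}_p$). Since $f_\veps = \rho_\veps * f$ and translations commute with convolution,
$$
f_\veps(\cdot+y)+f_\veps(\cdot-y)-2\,f_\veps(\cdot)\;=\;\rho_\veps\,*\,\bigl[f(\cdot+y)+f(\cdot-y)-2\,f(\cdot)\bigr],
$$
so Young's inequality together with $\|\rho_\veps\|_{L^1}=1$ returns the defining $\mc{Z}_p$ (resp.\ $\mc{LZ}_p$) bound of $f$, uniformly in $\veps$. The same inequality controls $\|f_\veps\|_{L^p}\leq\|f\|_{L^p}$.

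For the three quantitative estimates, the key observation is that $\rho_\veps$ and $\d_t^2\rho_\veps$ are even while $\d_t\rho_\veps$ is odd, and that $\int\rho_\veps=1$, $\int\d_t\rho_\veps=\int\d_t^2\rho_\veps=0$. Starting from the identity $f_\veps(t)-f(t)=\int\rho_\veps(u)\,[f(t-u)-f(t)]\,du$, the substitution $u\mapsto -u$ symmetrizes the integrand into a \emph{second-order} difference:
$$
f_\veps(t)-f(t)\;=\;\tfrac12\int\rho_\veps(u)\,\bigl[f(t+u)+f(t-u)-2\,f(t)\bigr]\,du.
$$
The analogous identity holds for $\d_t^2f_\veps$ with $\d_t^2\rho_\veps$ in place of $\rho_\veps$. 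Applying Minkowski's inequality together with the Zygmund or log-Zygmund hypothesis, exploiting the support condition $|u|\le\veps$ and the monotonicity of $x\mapsto x\log^\ell(1+1/x)$ on $(0,1]$, gives \eqref{est:f_e-f} from $\|\rho_\veps\|_{L^1}=1$ and \eqref{est:d_tt-f} from $\|\d_t^2\rho_\veps\|_{L^1}\leq C/\veps^2$ after the change of variable $u=\veps v$.

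The first-derivative estimate \eqref{est:d_t-f} is the \emph{main obstacle}, because $\d_t\rho_\veps$ is odd and the same symmetrization only yields a first-order difference,
$$
\d_tf_\veps(t)\;=\;-\tfrac12\int\d_t\rho_\veps(u)\,[f(t+u)-f(t-u)]\,du.
$$
Here I would invoke Lemma~\ref{l:z-ll}, which controls $\|f(\cdot+u)-f(\cdot-u)\|_{L^p}\leq C\,|u|\log^{1+\ell}(1+1/|u|)$, losing one power of logarithm with respect to the direct second-order computation. Rescaling $u=\veps v$ reduces the matter to an integral of the form $\int|\rho'(v)|\,|v|\,\log^{1+\ell}\bigl(1+\tfrac{1}{\veps|v|}\bigr)\,dv$, which I would split using $\log(1+xy)\leq\log(1+x)+\log(1+y)$ into an $\veps$-independent convergent piece (absorbing $\log^{1+\ell}(1/|v|)$, integrable near $0$ against $|v|\,|\rho'(v)|$) plus a factor $\log^{1+\ell}(1+1/\veps)$ with an absolutely integrable weight. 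This gives exactly the claimed bound \eqref{est:d_t-f}.
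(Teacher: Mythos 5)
Your proof is correct, and for the three quantitative bounds \eqref{est:f_e-f}--\eqref{est:d_t-f} it follows essentially the paper's own argument: symmetrization via the parity of $\rho$ and $\rho''$ together with $\int\rho'=\int\rho''=0$, Minkowski/Young, the monotonicity of $x\mapsto x\log^\ell(1+1/x)$, and crucially Lemma \ref{l:z-ll} for the first derivative, which is exactly where the extra power of logarithm in \eqref{est:d_t-f} comes from (the paper writes $\d_tf_\veps$ with the unsymmetrized difference $f(t-s)-f(t)$, you with $f(t+u)-f(t-u)$; this is immaterial, except that your step-$2u$ difference can exceed $1$ when $\veps>1/2$, so you should split it into two first differences of step $|u|\le\veps$ to stay within the hypothesis $|y|<1$ of Lemma \ref{l:z-ll} --- a trivial fix). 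Where you genuinely diverge is the boundedness of $(f_\veps)_\veps$ in $\mc{Z}_p$ or $\mc{LZ}_p$: the paper views $f_\veps=\what{\rho}(\veps D_t)f$, checks that $\what{\rho}(\veps\tau)$ is an $S^{0+0\log}$-multiplier uniformly in $\veps$, and concludes via Proposition \ref{p:log-mult} combined with the Besov characterization of Proposition \ref{p:zygm_p}; you instead commute translations with convolution and apply Young's inequality with $\|\rho_\veps\|_{L^1}=1$. Your route is more elementary, works directly from Definition \ref{def:zyg_p}, and even yields the sharper constant $\|f_\veps\|_{\mc{Z}_p}\le\|f\|_{\mc{Z}_p}$; the paper's multiplier argument is less self-contained but fits the Littlewood--Paley framework used throughout and shows the stronger fact that $\what{\rho}(\veps D_t)$ acts uniformly on all the (logarithmic) Besov classes, not just on the Zygmund spaces at hand.
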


\begin{proof}
 It's easy to see that \eqref{eq:f_e} can be rewritten as
$$
f_\veps(t)\,=\,\what{\rho}_\veps(D_t)f(t)\,=\,\mc{F}^{-1}_\tau\bigl(\what{\rho}(\veps\tau)\,\what{f}(\tau)\bigr)(t)\,,
$$
where we have set $\what{a}=\mc{F}_t a$ the Fourier transform of $a$ with respect to $t$, $\tau$ the dual variable and
$\mc{F}_\tau^{-1}$ the inverse Fourier transform.

Now, we notice that $\what{\rho}(\veps\tau)$ is a Fourier multiplier, so it commutes with the operators $\Delta_\nu$ of a
Littlewood-Paley decomposition (again, with respect to $t$), and that, for any $\veps\in\,]0,1]$,
$$
\left\|\what{\rho}(\veps\,\cdot\,)\right\|_{L^\infty}\,=\,\left\|\what{\rho}\right\|_{L^\infty}\,\leq\,C\,\|\rho\|_{L^1}\,.
$$
Moreover, as $\rho\in\mc{C}^\infty_0$, then $\what{\rho}\in\mc{S}$ (where $\mc{S}$ denotes the Schwartz class); this implies, in particular,
$$
\left\|\tau^\alpha\,\d^\beta_\tau\what{\rho}(\veps\,\cdot\,)\right\|_{L^\infty}\,\leq\,C_{\alpha,\beta}\,\veps^{|\beta|}\,.
$$
Therefore, $\what{\rho}(\veps\tau)$ is a $S^{0+0\log}$-multiplier (in the sense of Definition \ref{d:log-mult}),
uniformly in $\veps$; then, by Propositions \ref{p:log-mult} and \ref{p:zygm_p}, if $f\in\mc{Z}_p$ we get that also $f_\veps\in\mc{Z}_p$ and
$$
\left\|f_\veps\right\|_{\mc{Z}_p}\,\leq\,C\,\|f\|_{\mc{Z}_p}\,.
$$
The same arguments apply when working in the space $\mc{LZ}_p$.

 Now, let us focus in the case $p<+\infty$, the only new one.

Using the fact that $\rho$ is even and has unitary integral, we can write
$$
f_\veps(t)\,-\,f(t)\,=\,\frac{1}{2\,\veps}\,\int\rho\!\left(\frac{s}{\veps}\right)\bigl(f(t+s)\,+\,f(t-s)\,-\,2f(t)\bigr)\,ds\,.
$$
Then, we take the $L^p$ norm: thanks to Minkowski's inequality (see e.g. \cite{B-C-D}, Chapter 1), we  obtain
$$
\left\|f_\veps\,-\,f\right\|_{L^p}\,\leq\,\frac{1}{2\,\veps}\,\int_{[-\veps,\veps]}\rho(s/\veps)\,
\left\|f(\,\cdot\,+s)\,+\,f(\,\cdot\,-s)\,-\,2f(\,\cdot\,)\right\|_{L^p}\,ds\,.
$$
At this point, estimate \eqref{est:f_e-f} immediately follows, using also the fact that the function
$s\,\mapsto\,s\,\log^\ell\left(1+1/s\right)$ is increasing in $[0,1]$ both for $\ell=0$ and $\ell=1$.

For \eqref{est:d_tt-f} we can argue in the same way, recalling that $\rho''$ is even and that $\int\rho''=0$.

We have to pay attention to the estimate of the first derivative. As $\int\rho'\equiv0$, one has
$$
\d_tf_\veps(t)\,=\,\frac{1}{\veps^2}\,\int_{|s|\leq\veps}\rho'\left(\frac{s}{\veps}\right)\bigl(f(t-s)-f(t)\bigr)ds\,.
$$
Now, we apply Minkowski's inequality, as before, and we use Lemma \ref{l:z-ll}. Estimate \eqref{est:d_t-f} then follows
noticing that the function $s\,\mapsto\,s\,\log^{2}(1+1/s)$ is increasing in $[0,s_0]$, for some $s_0<1$
(and in $[s_0,1]$ it remains strictly positive).

The proposition is now completely proved.
\end{proof}

\begin{rem} \label{r:point}
The previous proposition states that the integral hypothesis on the Zygmund function $f$ gives a control on the corresponding
integral norms of the approximating family $f_\veps$.

Actually, in Corollary \ref{c:z-bound} we have shown that, for $p>1$, the embeddings
$\mc{Z}_p\,\hra\,\mc{LZ}_p\,\hra\,\mc{C}^\sigma$ hold true, for some $0<\sigma<1-1/p$.
Then, arguing as in the last part of the proof,
we immediately infer also pointwise controls: for any $t$,
$$
\bigl|f_\veps(t)\,-\,f(t)\bigr|\,\leq\,C\,\veps^\sigma\,|f|_{\mc{LZ}_p}\qquad\mbox{ and }\qquad
\bigl|\d_tf_\veps(t)\bigr|\,\leq\,\frac{C}{\veps^{1-\sigma}}\,|f|_{\mc{LZ}_p}
$$
(and analogous for the $\mc{Z}_p$ instance). This property obviously extends to matrix-valued functions.
\end{rem}

We conclude this part with fundamental ODE results. The first one mainly states that, in our functional framework, we can solve the equation
$$
f'\,=\,g
$$
only in an approximate way: we can't find an exact primitive of $g$ in the Zygmund classes. The problem is the control of the low
frequencies, and it's very much linked with Bernstein's inequalities.

\begin{prop} \label{p:ode}
\begin{itemize}
\item[(i)] If $f\in B^1_{p,\infty}$ or $B^{1-\log}_{p,\infty}$, then $\d_tf\in B^0_{p,\infty}$ or $B^{0-\log}_{p,\infty}$ respectively.
\item[(ii)] Let $g\in B^0_{p,\infty}$ or $B^{0-\log}_{p,\infty}$.
Then there exists $f\in B^1_{p,\infty}$ or $B^{1-\log}_{p,\infty}$ respectively, such that
$$
r\,:=\,\d_tf\,-\,g\;\in\;B^\infty_{p,\infty}\,:=\,\bigcap_{s\in\R}B^s_{p,\infty}\,.
$$
Moreover, if we have a bounded family $\left(g_\veps\right)_\veps\,\subset\,B^0_{p,\infty}$ or $B^{0-\log}_{p,\infty}$,
then also the corresponding family of solutions $\left(f_\veps\right)_\veps$ and of remainders $\left(r_\veps\right)_\veps$
are bounded sets in their respective functional spaces.
\end{itemize}
\end{prop}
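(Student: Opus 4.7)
The plan is to read off both statements directly from the Fourier-multiplier result Proposition~\ref{p:log-mult}. For part (i), the symbol of $\partial_t$ is $i\tau$, and since $|\partial_\tau^\nu(i\tau)| \leq C_\nu (1+|\tau|)^{1-\nu}$ it is an $S^{1+0\log}$-multiplier in the sense of Definition~\ref{d:log-mult}. Proposition~\ref{p:log-mult} then sends $B^{1+\alpha\log}_{p,\infty}$ continuously into $B^{0+\alpha\log}_{p,\infty}$; specialising to $\alpha=0$ and $\alpha=-1$ gives the two cases of (i).

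For part (ii), the idea is to invert $\partial_t$ up to a low-frequency remainder by introducing the cut-off symbol
\[
m(\tau)\,:=\,\frac{1-\chi(\tau)}{i\tau}.
\]
Because $1-\chi$ vanishes in a neighbourhood of the origin, $m$ is smooth on all of $\R$, and a direct inspection shows $|\partial_\tau^\nu m(\tau)| \leq C_\nu (1+|\tau|)^{-1-\nu}$, so $m$ is an $S^{-1+0\log}$-multiplier. Setting $f:=m(D)g$, Proposition~\ref{p:log-mult} produces $f\in B^{1}_{p,\infty}$ (resp.\ $B^{1-\log}_{p,\infty}$), with norm depending linearly on $\|g\|_{B^0_{p,\infty}}$ (resp.\ $\|g\|_{B^{0-\log}_{p,\infty}}$). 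By construction of $m$ one has
\[
\partial_t f\,=\,\bigl(i\tau\,m(\tau)\bigr)(D)\,g\,=\,\bigl(1-\chi\bigr)(D)\,g\,=\,g\,-\,\chi(D)\,g,
\]
hence $r=-\chi(D)g$ has spectrum contained in the fixed ball $B(0,2)$. Consequently $\Delta_j r\equiv 0$ for $j$ large, and Bernstein's inequality (Lemma~\ref{l:bern}) gives $\|r\|_{B^s_{p,\infty}} \leq C_s\|g\|_{L^p}$ for every $s\in\R$, placing $r$ in $B^\infty_{p,\infty}$.

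The statement on bounded families follows at once: since $f_\veps = m(D)g_\veps$ and $r_\veps = -\chi(D)g_\veps$ are obtained by applying two fixed bounded linear operators to $g_\veps$, any uniform bound on $g_\veps$ transfers to uniform bounds on $f_\veps$ and $r_\veps$ in the relevant spaces. I do not expect any genuine obstacle here; the only point worth underlining is that one cannot take $m(\tau)=1/(i\tau)$ directly, because this symbol is singular at the origin, and inserting the cut-off $1-\chi$ is precisely the device that absorbs the ``low-frequency'' obstruction mentioned before the statement into the smooth remainder $r$.
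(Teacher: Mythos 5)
Your proof is correct and follows essentially the same approach as the paper: both constructions define $f$ in the phase space as $(\mbox{smooth low-frequency cutoff})/(i\tau)$ applied to $\what{g}$, so that $\partial_t f - g$ is spectrally supported near the origin. The only difference is stylistic — you delegate the $B^{0+\alpha\log}_{p,\infty}\to B^{1+\alpha\log}_{p,\infty}$ boundedness to Proposition~\ref{p:log-mult}, whereas the paper carries out the dyadic-block computation by hand (a choice made because that explicit computation, including formula \eqref{eq:loc_2}, is reused later in Lemma~\ref{l:J_mu} to track the dependence of the operator norm on the cutoff scale).
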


\begin{proof}
The first property is a straightforward consequence of Bernstein's inequalities. So, let us focus on the second statement.

In order to cut off the low frequencies, we fix a function $\theta\in\mc{C}^\infty(\R_\tau)$ such that $\theta\equiv0$ in a neighborhood
of $0$ and $\theta\equiv1$ for $|\tau|\geq1$. We then define $f$ in the phase space by the formula
$$
\what{f}(\tau)\,:=\,\frac{1}{i\,\tau}\,\theta(\tau)\,\what{g}(\tau)\,.
$$

First of all, we want to prove that $f$ belongs to the right space.

We start noticing that, for any $\nu\geq1$, we have $\wtilde{\vphi}_\nu\,:=\,\vphi_{\nu-1}+\vphi_\nu+\vphi_{\nu+1}\,\equiv\,1$
on the support of $\vphi_\nu$ (recall that $\vphi_0\equiv\vphi$, see the beginning of Subsection \ref{ss:L-P}); hence, we infer the equality
\begin{equation} \label{eq:loc}
\vphi_\nu(\tau)\,\what{f}(\tau)\,=\,\frac{1}{i\,\tau}\,\theta(\tau)\,\wtilde{\vphi}_\nu(\tau)\,\vphi_\nu(\tau)\,\what{g}(\tau)\,.
\end{equation}
However, due to the properties of $\theta$, for any $\nu\geq2$ we have $\theta\,\wtilde{\vphi}_\nu\,\equiv\,\wtilde{\vphi}_\nu$. So,
let us define the function $\what{\psi}\in\mc{C}^\infty_0$ (and vanishing near the origin) by the formula
$$
\what{\psi}(\tau)\,:=\,\frac{1}{i\,\tau}\,\wtilde{\vphi}_1(\tau)\,.
$$
Taking the inverse Fourier transform of relation \eqref{eq:loc}, for any $\nu\geq2$ we find
\begin{equation} \label{eq:loc_2}
\Delta_\nu f\,=\,2^{-\nu}\,\psi_\nu\,*\,\Delta_\nu g\,,
\end{equation}
where $\what{\psi}_\nu$ is given by
$\what{\psi}_\nu(\tau)\,:=\,\what{\psi}\left(2^{-\nu}\,\tau\right)$, 
which trivially implies $\psi_\nu(t)\,=\,2^\nu\,\psi\left(2^{\nu}t\right)$.

Let us now consider low frequencies. Again from \eqref{eq:loc} we find
$$
\vphi_1(\tau)\,\what{f}(\tau)\,=\,\frac{1}{i\,\tau}\,\theta(\tau)\,\wtilde{\vphi}_1(\tau)\,\vphi_1(\tau)\,\what{g}(\tau)\quad
\mbox{ and }\quad
\chi(\tau)\,\what{f}(\tau)\,=\,\frac{1}{i\,\tau}\,\theta(\tau)\,\wtilde{\chi}(\tau)\,\chi(\tau)\,\what{g}(\tau)\,,
$$
where we introduced another cut-off function $\wtilde{\chi}$, supported in (say) the ball $B(0,2)$ and
equal to $1$ on the support of $\chi$ (introduced at the beginning of Subsection \ref{ss:L-P}).

From these relations and \eqref{eq:loc_2}, taking into account the properties of $g$, $\theta$, $\wtilde{\vphi}_1$ and $\wtilde{\chi}$,
we immediately get that $f$ belongs to the right Zygmund class.

Now, to prove the regularity of the remainder term, it's enough to observe that
$$
r\,:=\,\d_tf\,-\,g\,=\,\bigl(\theta-1\bigr)\,g\;\in\;B^0_{p,\infty}\qquad\mbox{ (or $B^{0-\log}_{p,\infty}$)}
$$
has compact spectrum, hence it belongs to any $B^s_{p,r}$ for any $s\in\R$ and any $r\in[1,+\infty]$.

Finally, from the previous proof (see also relation \eqref{eq:loc_2} for $f$) one gathers
$$
\|f\|_{B^1_{p,\infty}}\,\leq\,C\,\|g\|_{B^0_{p,\infty}}\qquad\mbox{ and }\qquad
\|r\|_{B^s_{p,\infty}}\,\leq\,C_s\,\|g\|_{B^0_{p,\infty}}\,.
$$
Then, the last sentence immediately follows.
\end{proof}

Note that the previous construction provides us with a linear continuous operator
\begin{equation} \label{eq:def_J}
 J\,:\;B^{0-\ell\log}_{p,\infty}\,\longrightarrow\,B^{1-\ell\log}_{p,\infty}
\end{equation}
(as usual, $\ell=0$ or $\ell=1$ if we are in the Zygmund or in the log-Zygmund instance respectively) such that
$Ju$ is an approximated primitive of $u$:
\begin{equation} \label{eq:J_cont}
\d_tJu\,-\,u\;\in\;B^\infty_{p,\infty}\,.
\end{equation}

The construction of $J$ depends just on the smooth function $\theta$ we fix at the beginning, and it is easy to see that
its norm is given by
$$
\max\left\{\left\|\psi\right\|_{L^1}\;,\;\left\|\mc{F}^{-1}\left(\frac{1}{\tau}\,\theta(\tau)\,\wtilde{\vphi}_1(\tau)\right)\right\|_{L^1}\;,\;
\left\|\mc{F}^{-1}\left(\frac{1}{\tau}\,\theta(\tau)\,\wtilde{\chi}(\tau)\right)\right\|_{L^1}\right\}\,.
$$

For any $\mu\in\N$, let us now set $\theta_\mu(\tau)\,:=\,\theta\bigl(2^{-\mu}\,\tau\bigr)$ and define the
operator $J_\mu$ following the previous construction, but using $\theta_\mu$ instead of $\theta$.
Obviously, properties \eqref{eq:def_J} and \eqref{eq:J_cont} holds true also for $J_\mu$.
\begin{lemma} \label{l:J_mu}
For any $\mu\geq5$, the operator $J_\mu$ maps continuously $B^0_{p,\infty}$ into $B^1_{p,\infty}$, and its norm
$\|J_\mu\|_{\mc{L}(B^0_{p,\infty}\ra B^1_{p,\infty})}$ is independent of $\mu$.

For any fixed $0<s<1$, instead, $\|J_\mu\|_{\mc{L}(B^0_{p,\infty}\ra B^s_{p,\infty})}\,\leq\,C\,2^{-\mu(1-s)}$.

The same holds true in logarithmic classes.
\end{lemma}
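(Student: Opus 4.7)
The plan is to follow the construction of Proposition \ref{p:ode} verbatim, with $\theta$ replaced by $\theta_\mu(\tau)=\theta(2^{-\mu}\tau)$. Since $\theta$ vanishes on some set $\{|\tau|\le\alpha\}$ with $\alpha>0$ and equals $1$ for $|\tau|\ge 1$, the rescaled cut-off $\theta_\mu$ vanishes on $\{|\tau|\le\alpha\,2^\mu\}$ and equals $1$ on $\{|\tau|\ge 2^\mu\}$. Consequently $\mc{F}(J_\mu g)$ is spectrally supported in $\{|\tau|\ge\alpha\,2^\mu\}$, and the threshold $\mu\ge5$ is exactly what is needed so that $\alpha\,2^\mu$ exceeds the radius of the support of $\wtilde{\chi}$; this forces $\Delta_0(J_\mu g)=0$, and more generally $\Delta_\nu(J_\mu g)=0$ for every $\nu\le\mu-C_0$, where $C_0$ depends only on $\theta$.

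The core computation is the uniform dyadic estimate
$$
\|\Delta_\nu(J_\mu g)\|_{L^p}\;\le\;C\,2^{-\nu}\,\|\Delta_\nu g\|_{L^p}\,,
$$
with $C$ independent of $\mu$ and $\nu$. For $\nu\ge\mu+2$ we have $\theta_\mu\equiv1$ on the support of $\wtilde{\vphi}_\nu$, so the formula from the proof of Proposition \ref{p:ode} reproduces exactly $\Delta_\nu(J_\mu g)=2^{-\nu}\psi_\nu*\Delta_\nu g$, and Young's inequality gives the bound with constant $\|\psi\|_{L^1}$. For the finitely many $\nu$ in the transition window $\mu-C_0\le\nu\le\mu+1$, we write $\Delta_\nu(J_\mu g)=K_{\mu,\nu}*\Delta_\nu g$ with
$$
\what{K}_{\mu,\nu}(\tau)\;=\;\frac{\theta_\mu(\tau)}{i\,\tau}\,\wtilde{\vphi}_\nu(\tau)\,;
$$
after the rescaling $\tau=2^\nu\sigma$ this becomes $2^{-\nu}$ times the smooth compactly supported function $\sigma\mapsto(i\sigma)^{-1}\,\theta(2^{\nu-\mu}\sigma)\,\wtilde{\vphi}(\sigma)$, whose Schwartz seminorms are controlled uniformly over the finite range $|\nu-\mu|\le C_0+1$. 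Hence $\|K_{\mu,\nu}\|_{L^1}\le C\,2^{-\nu}$ uniformly, and Young's inequality closes the estimate.

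With the dyadic bound in hand, both assertions drop out. Taking $\sup_\nu 2^\nu\|\Delta_\nu(J_\mu g)\|_{L^p}$ yields the $B^0_{p,\infty}\to B^1_{p,\infty}$ continuity with constant independent of $\mu$. For $0<s<1$, using that $\Delta_\nu(J_\mu g)=0$ whenever $\nu<\mu-C_0$, we obtain
$$
2^{s\nu}\,\|\Delta_\nu(J_\mu g)\|_{L^p}\;\le\;C\,2^{-(1-s)\nu}\,\|g\|_{B^0_{p,\infty}}\;\le\;C\,2^{-(1-s)(\mu-C_0)}\,\|g\|_{B^0_{p,\infty}}\,,
$$
and passing to the supremum over $\nu$ produces the advertised factor $2^{-\mu(1-s)}$. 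The logarithmic analogues follow line by line, since the weight $(1+\nu)^{-1}$ appears symmetrically on both sides of the dyadic inequality and drops out upon taking the supremum.

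The main obstacle is the uniformity of the constants on the transition window $\nu\approx\mu$: although it contains only $O(1)$ indices, one has to verify that the $L^1$-norm of the kernel $K_{\mu,\nu}$ does not degenerate as $\mu\to\infty$, which is precisely what the rescaling argument above guarantees. Everything else is a direct transcription of the proof of Proposition \ref{p:ode}, the extra frequency room $\alpha\,2^\mu$ gained from the cut-off $\theta_\mu$ being exactly what converts the $B^0\to B^1$ estimate into the improved $2^{-\mu(1-s)}$ decay into $B^s$.
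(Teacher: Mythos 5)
Your proposal is correct and follows essentially the same route as the paper: the uniform dyadic bound $\|\Delta_\nu (J_\mu g)\|_{L^p}\leq C\,2^{-\nu}\|\Delta_\nu g\|_{L^p}$ obtained from the exact formula with $\psi_\nu$ for high frequencies and from rescaled kernels depending only on $\nu-\mu$ (the paper's $\wtilde{\psi}_{1,\nu-\mu}$, your $K_{\mu,\nu}$) on the finite transition window, together with the vanishing of $\Delta_\nu(J_\mu g)$ for $\nu$ below $\mu$ minus a fixed constant, which yields both the $\mu$-independent $B^0_{p,\infty}\ra B^1_{p,\infty}$ bound and the $2^{-\mu(1-s)}$ gain into $B^s_{p,\infty}$. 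The logarithmic case is handled the same way in both arguments.
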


\begin{proof}
Arguing as before, for any $\nu\in\N$ we arrive at the formula
$$
\vphi_\nu(\tau)\,\what{f}(\tau)\,=\,\frac{1}{i\,\tau}\,\theta(2^{-\mu}\tau)\,\wtilde{\vphi}_\nu(\tau)\,\vphi_\nu(\tau)\,\what{g}(\tau)\,.
$$

By spectral localization, there exists a $\nu_\mu=\mu+2$ such that, if $\nu>\nu_\mu$ then
$\theta(2^{-\mu}\tau)\,\wtilde{\vphi}_\nu(\tau)\equiv\wtilde{\vphi}_\nu(\tau)$, and there exists
a $\oline{\nu_\mu}=\mu-3$ such that $\theta(2^{-\mu}\tau)\,\wtilde{\vphi}_\nu(\tau)\equiv0$ for $\nu<\oline{\nu_\mu}$.

Hence, for $\nu>\nu_\mu$ formula \eqref{eq:loc_2} holds true, and from it we infer the estimate
\begin{equation} \label{est:J_high}
\|\Delta_\nu f\|_{L^p}\,\leq\,2^{-\nu}\,\|\psi\|_{L^1}\,\|\Delta_\nu g\|_{L^p}\,.
\end{equation}

For $\oline{\nu_\mu}\leq\nu\leq\nu_\mu$, instead, we have the equality
\begin{equation} \label{eq:J_low}
\Delta_\nu f\,=\,2^{-\nu}\,\wtilde{\psi}_{\nu,\nu-\mu}\,*\,\Delta_\nu g\,,
\end{equation}
where we have set $\mc{F}\left(\wtilde{\psi}_{\nu,\nu-\mu}\right)(\tau)\,=\,\mc{F}\left(\wtilde{\psi}_{1,\nu-\mu}\right)(2^{-\nu}\,\tau)$,
with
$$
\mc{F}\left(\wtilde{\psi}_{1,\nu-\mu}\right)(\tau)\,:=\,\frac{1}{i\,\tau}\,\theta(2^{\nu-\mu}\tau)\,\wtilde{\vphi}_1(\tau)\,.
$$
In other words, for any $\nu$, the Fourier transform of $\wtilde{\psi}_{\nu,\nu-\mu}$ is the rescaled of the Fourier
transform of a fixed $\wtilde{\psi}_{1,\nu-\mu}$: actually, this function doesn't depend neither on $\nu$ nor on $\mu$,
but just on their difference $-3\,\leq\,\nu-\mu\,\leq\,2$. Then, the contribution to the norm of the operator $J_\mu$ comes
from the $L^1$ norms of a finite number of terms:
$$
\mc{F}\left(\wtilde{\psi}_{1,-3}\right)\,,\;\mc{F}\left(\wtilde{\psi}_{1,-2}\right)\,\ldots\;
\mc{F}\left(\wtilde{\psi}_{1,2}\right)\,.
$$


Therefore, for any $\oline{\nu_\mu}\,\leq\,\nu\,\leq\,\nu_\mu$, by \eqref{eq:J_low} we get
\begin{equation} \label{est:J_low}
 \|\Delta_\nu f\|_{L^p}\,\leq\,2^{-\nu}\,\|\wtilde{\psi}_{1,\nu-\mu}\|_{L^1}\,\|\Delta_\nu g\|_{L^p}\,,
\end{equation}
and the norm of each $\wtilde{\psi}_{1,\nu-\mu}$ doesn't depend on $\nu$, neither on $\mu$.

From \eqref{est:J_high} and \eqref{est:J_low} it's easy to get the conclusion.
\end{proof}

Working component by component, the previous lemma extends to the  case of matrix valued functions.
Hence, we can generalize Proposition \ref{p:ode} to the case of first order systems of ODEs.
\begin{prop} \label{p:ode_system}
Let $k\in\N$ and let $M\,\in\,B^0_{p,\infty}\bigl(\R\,;\,\mc{M}_k(\R)\bigr)$.

Then, for any $X_0\in\R^k$, there exists a vector $X\,\in\,B^1_{p,\infty}\bigl(\R\,;\,\R^k\bigr)$ such that
$X(0)=X_0$ and
$$
\d_tX\,-\,MX\;\in\;B^\infty_{p,\infty}\bigl(\R\,;\;\R^k\bigr)\,.
$$

The same statement holds true in logarithmic Zygmund classes.
\end{prop}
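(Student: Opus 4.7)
The plan is to construct $X$ by a fixed-point argument anchored on the approximate primitive $J_\mu$ of Lemma \ref{l:J_mu}, after an affine reduction that absorbs the initial condition. I would first fix a cut-off $\chi_0\in\mc{C}^\infty_0(\R)$ with $\chi_0(0)=1$ and seek $X$ in the form $X=\chi_0\,X_0+Y$, where $Y\in B^1_{p,\infty}(\R;\R^k)$ is required to satisfy $Y(0)=0$ and
\[
\d_tY\,-\,MY\,-\,g\;\in\;B^\infty_{p,\infty}\,,\qquad g\,:=\,\chi_0\,M\,X_0\,-\,\chi_0'\,X_0\;\in\;B^0_{p,\infty}(\R;\R^k)\,.
\]
This shifts the constant $X_0$ (which is not in any $B^s_{p,\infty}$ for $p<\infty$) into a smooth compactly supported term, so that $Y$ genuinely lives in a Zygmund class; the membership of $g$ follows from the action of $\chi_0,\chi_0'\in\mc{C}^\infty_0$ on the Besov scale.

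Next, I would fix $s\in\,]1/p,1[\,$ and, for a parameter $\mu\geq5$ to be tuned, introduce on the Banach space $B^s_{p,\infty}(\R;\R^k)$ the affine map
\[
T(Y)\;:=\;J_\mu(MY+g)\;-\;\bigl[J_\mu(MY+g)\bigr](0)\,\chi_0\,.
\]
The pointwise evaluation at $0$ is well-defined because $B^s_{p,\infty}\hra B^{s-1/p}_{\infty,\infty}\hra\mc{C}^0$ by Proposition \ref{p:log-emb}, and by construction $(TY)(0)=0$. The crux is the multiplicative estimate
\[
\|MY\|_{B^0_{p,\infty}}\;\leq\;C\,\|M\|_{B^0_{p,\infty}}\,\|Y\|_{B^s_{p,\infty}}\,,
\]
which I would prove componentwise via Bony's decomposition, using Theorem \ref{t:log-pp} for $T_YM$ (since $Y\in L^\infty$) and for $T_MY$ (after embedding $M\in B^{-1/p}_{\infty,\infty}$), together with Theorem \ref{t:log-r} for $R(M,Y)$, all of which apply thanks to the strict inequality $s-1/p>0$. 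Combining this with the quantitative bound $\|J_\mu h\|_{B^s_{p,\infty}}\leq C\,2^{-\mu(1-s)}\|h\|_{B^0_{p,\infty}}$ of Lemma \ref{l:J_mu} and the continuous embedding $B^s_{p,\infty}\hra\mc{C}^0$ gives
\[
\|TY_1-TY_2\|_{B^s_{p,\infty}}\;\leq\;C'\,2^{-\mu(1-s)}\,\|M\|_{B^0_{p,\infty}}\,\|Y_1-Y_2\|_{B^s_{p,\infty}}\,,
\]
so taking $\mu$ large enough makes $T$ a strict contraction on $B^s_{p,\infty}$, and the Banach fixed-point theorem yields $Y^*\in B^s_{p,\infty}$ with $Y^*=TY^*$ and $Y^*(0)=0$.

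To finish, I would bootstrap the regularity and check the remainder. Since $Y^*\in B^s_{p,\infty}\subset L^\infty$, the multiplicative estimate above gives $MY^*+g\in B^0_{p,\infty}$, so the uniform boundedness of $J_\mu:B^0_{p,\infty}\to B^1_{p,\infty}$ (first part of Lemma \ref{l:J_mu}) promotes $Y^*$ to $B^1_{p,\infty}$. Differentiating the identity $Y^*=J_\mu(MY^*+g)-c^*\chi_0$, where $c^*:=[J_\mu(MY^*+g)](0)\in\R^k$, and applying Proposition \ref{p:ode} yields
\[
\d_tY^*\,-\,MY^*\,-\,g\;=\;\bigl(\d_tJ_\mu(MY^*+g)\,-\,(MY^*+g)\bigr)\,-\,c^*\,\chi_0'\;\in\;B^\infty_{p,\infty}\,,
\]
since the bracket lies in $B^\infty_{p,\infty}$ and $\chi_0'\in\mc{C}^\infty_0\subset B^\infty_{p,\infty}$. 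Setting $X=\chi_0X_0+Y^*$ then gives $X(0)=X_0$ and $\d_tX-MX\in B^\infty_{p,\infty}$, as required. The log-Zygmund case is structurally identical, invoking the logarithmic variants of all the ingredients. I expect the main obstacle to be the multiplicative estimate: the roughness of $M$ must be compensated by choosing $s$ strictly above the Sobolev embedding threshold $1/p$, and the simultaneous tuning of $s$ and $\mu$, so that $B^s_{p,\infty}$ behaves well under multiplication by $M$ while $J_\mu$ remains contracting in it, is the delicate point.
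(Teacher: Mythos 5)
Your proof is correct and follows essentially the same route as the paper: there, too, the statement is reduced to the fixed-point equation $X\,=\,J_\mu(MX)\,+\,X_0$, solved by Picard iteration which contracts in $B^s_{p,\infty}$ for a fixed $1/p<s<1$ and $\mu$ large thanks to Lemma \ref{l:J_mu} and the paraproduct/remainder estimates, then bootstrapped to $B^1_{p,\infty}$, with the remainder handled by \eqref{eq:J_cont}. Your only deviations --- replacing the constant $X_0$ by $\chi_0\,X_0$ and subtracting $\bigl[J_\mu(\,\cdot\,)\bigr](0)\,\chi_0$ --- are minor refinements of the same argument, which in fact enforce $X(0)=X_0$ exactly and keep $X$ genuinely in $B^1_{p,\infty}(\R)$ when $p<+\infty$, two points the paper treats loosely (cf.\ Remark \ref{r:ode_system}, which only records $\|X^{(\mu)}-X_0\|_{L^\infty}\leq C'\,2^{-\mu(1-s)}$).
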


\begin{proof}
By \eqref{eq:J_cont}, it's enough to solve the ``integral'' equation
\begin{equation} \label{eq:ode_int}
X\,=\,J_{\mu}\bigl(MX\bigr)\,+\,X_0
\end{equation}
where $\mu\in\N$ will be chosen later on.

We apply the classical Picard iteration scheme. Let us define $X^0:=X_0$ and, for any $n\in\N$,
$$
X^{n+1}\,:=\,J_\mu\bigl(M\,X^n\bigr)\,+\,X_0\,.
$$
By \eqref{eq:def_J} and Theorems \ref{t:log-pp} and \ref{t:log-r}, it's easy to see that $\bigl(X^n\bigr)_n\,\subset\,B^1_{p,\infty}$.

We claim now that $\bigl(X^n\bigr)_n$ is a Cauchy sequence in the space $B^s_{p,\infty}$, for a fixed
$1/p<s<1$ and for $\mu$ large enough. As a matter of facts, by definition of $X^n$ and Lemma \ref{l:J_mu}, for any $n\in\N$ we have
\begin{eqnarray*}
\left\|X^{n+1}\,-\,X^n\right\|_{B^s_{p,\infty}} & = & \left\|J_\mu\bigl(M\,(X^n\,-\,X^{n-1})\bigr)\right\|_{B^s_{p,\infty}} \\
& \leq & \bigl\|J_\mu\bigr\|_{\mc{L}(B^0_{p,\infty}\ra B^s_{p,\infty})}\,\|M\|_{B^0_{p,\infty}}\,
\left\|X^n\,-\,X^{n-1}\right\|_{B^s_{p,\infty}} \\
& \leq & C\,2^{-\mu(1-s)}\,\|M\|_{B^0_{p,\infty}}\,\left\|X^n\,-\,X^{n-1}\right\|_{B^s_{p,\infty}}
\end{eqnarray*}
and from this estimate we infer the claim.

Therefore, there exists a unique $X\in B^s_{p,\infty}$ such that $X^n\,\ra\,X$ in this space, and then $X$ solves equation
\eqref{eq:ode_int}.
Using again \eqref{eq:def_J} and the properties of paraproduct and remainder operators, we  see that, actually, $X\in B^1_{p,\infty}$.
Finally, we have that $\d_tX-MX\,=\,\d_tJ_\mu\bigl(MX\bigr)-MX$ belongs to
$B^\infty_{p,\infty}$ by \eqref{eq:J_cont}.
\end{proof}

\begin{rem} \label{r:ode_system}
Notice that, whenever we change $\mu\in\N$, we get a different solution $X^{(\mu)}$ to equation \eqref{eq:ode_int}
with initial datum $X_0$: actually,
by construction these solutions coincide for low enough and high enough frequencies.
However, from Lemma \ref{l:J_mu} we get that their $B^1_{p,\infty}$ norm is independent of $\mu$: then,
thanks to the embedding $B^{s}_{p,\infty}\hra L^\infty$ for a fixed $1/p<s<1$, we can write
\begin{eqnarray*}
\left\|X^{(\mu)}\,-\,X_0\right\|_{L^\infty} & \leq & C\,2^{-\mu(1-s)}\,\|M\|_{B^0_{p,\infty}}\,\left\|X^{(\mu)}\right\|_{B^s_{p,\infty}} \\
& \leq & C\,2^{-\mu(1-s)}\,\|M\|_{B^0_{p,\infty}}\,\left\|X^{(\mu)}\right\|_{B^1_{p,\infty}}\,\leq\,C'\,2^{-\mu(1-s)}\,,
\end{eqnarray*}
where $C'$ doesn't depend on $\mu$.
\end{rem}

\section{Proof of the main results} \label{s:proof}

Let us now tackle the proof of our main results about energy estimates.

The key will be to build a symmetrizer for our operator $L$. However, in contrast with the classical case, we have to add one step and
continue the construction up to the second order, due to the low regularity of the coefficients $A_j$.

We immediately point out that, up to extend our coefficients out of the interval $[0,T]$,
we can suppose that they are defined on the whole line $\R$. So, the analysis we performed in the pervious section applies.

\medbreak
By hypothesis of hyperbolicity with constant multiplicities, at any point $(t,\xi)\in[0,T]\times\R^n$
we can fix a basis $\bigl(r_j(t,\xi)\bigr)_{1\leq j\leq m}$ of (real-valued normalized)
eigenvectors of $A(t,\xi)$, we can order the eigenvalues in a decreasing way,
$$
\lambda_1(t,\xi)\,\geq\,\ldots\,\geq\,\lambda_m(t,\xi)\,,
$$
and we can write $A(t,\xi)=P(t,\xi)\,\Lambda(t,\xi)\,\left(P^{-1}\right)(t,\xi)$, where we have set
$$
\Lambda(t,\xi)\,:=\,{\rm diag}\bigl(\lambda_1(t,\xi)\,,\,\ldots\,,\,\lambda_m(t,\xi)\bigr)\qquad\mbox{ and }\qquad
P(t,\xi)\,:=\,\bigl(r_1(t,\xi)\,|\,\ldots\,|\,r_m(t,\xi)\bigr)\,.
$$


\begin{rem} \label{r:smooth}
By linear operators perturbation theory, the hyperbolicity with constant multiplicities implies that the $\lambda_j$'s are analytic functions of the elements of the matrix, and so are the eigenprojectors $\Pi_k$'s.
The eigenvectors, instead, preserve the same regularity of the initial coefficients only locally: in fact, if we choose an eigenbasis $\bigl(r_j(0,\xi)\bigr)_{1\leq j\leq m}$ at $t=0$, then at any $t$ we can extract a basis of eigenvectors from the family $\bigl(\Pi_k(t,\xi)\,r_j(0,\xi)\bigr)_{j,k}$.

One can refer e.g. to
Chapter 2 of \cite{K} or to Appendix 3.I of \cite{Rauch} for a more in-deep analysis of the problem.
\end{rem}

By previous remark, the $\lambda_j$'s and the vectors $r_j$'s inherit (at least locally) the Zygmund regularity of the coefficients of the matrix symbol $A(t,\xi)$. Notice that the local regularity in time is enough for our scopes, due to the local characterization of the Zygmund spaces (recall Definition \ref{def:zyg_p}, and see inequality \eqref{est:d_tE} below).
 As for the dual variable, instead, we will need no special smoothness, as we will
work at any fixed $\xi\in\R^n$.

\medbreak
We now smooth out the coefficients of $\Lambda$ and $P$ by convolution with respect to time, as explained
by formula \eqref{eq:f_e}, and we get the
two matrices $\Lambda_\veps:={\rm diag}\left(\lambda_{1,\veps}\ldots\lambda_{m,\veps}\right)$ and
$P_\veps:=\left(r_{1,\veps}|\ldots|r_{m,\veps}\right)$.

Note that, by the properties of the convolution, we obtain
$$
\lambda_{1,\veps}(t,\xi)\,\geq\,\ldots\,\geq\,\lambda_{m,\veps}(t,\xi)
$$
(where the multiplicities are preserved), while the $r_{j,\veps}$'s are still linearly independent,
at least for small $\veps$ (recall that the set of invertible matrices is open in $\mc{M}_m(\R)$).
Finally, thanks to Proposition \ref{p:z-approx}, the $\lambda_{j,\veps}$'s
and the $r_{j,\veps}$'s have the same Zygmund regularity (with respect to $t$) as $A$, uniformly in $\veps$.

Therefore, if we define, for any $\veps\in\,]0,1]$, any $t\in\R$ and any $\xi\in\R^n$,
\begin{equation} \label{eq:A_eps}
A_\veps(t,\xi)\,:=\,P_\veps(t,\xi)\,\Lambda_\veps(t,\xi)\,\left(P_\veps\right)^{-1}(t,\xi)\,,
\end{equation}
then $A_\veps(t,\xi)$ is still hyperbolic with constant multiplicities (by construction), it preserves the Zygmund regularity with respect to time (by Corollary \ref{c:z-bound}) and it approximates the original matrix $A(t,\xi)$
in the sense of Proposition \ref{p:z-approx}.

Note that, by construction, the line vectors of $Q_\veps:=\left(P_\veps\right)^{-1}$, which we'll call $\,^t\ell_{j,\veps}$, are
left-eigenvectors for $A_\veps$, i.e. they are eigenvectors of the adjoint matrix $A^*_\veps(t,\xi)$:
\begin{equation} \label{eq:l-eigen}
^t\ell_{j,\veps}(t,\xi)\,\cdot\,A_\veps(t,\xi)\,=\,\lambda_{j,\veps}(t,\xi)\,^t\ell_{j,\veps}(t,\xi)
\end{equation}
for all $\veps$, $t$ and $\xi$. Moreover, by definition we have
\begin{equation} \label{eq:l-r}
\ell_{j,\veps}(t,\xi)\,\cdot\,r_{k,\veps}(t,\xi)\,=\,\delta_{jk}\,,
\end{equation}
where we have denoted by $\delta_{jk}$ the Kronecker delta.

\subsection{Construction of the symmetrizer} \label{ss:symm}

We present here the key to the proof of the energy estimates: the construction of a symmetrizer for
operator $L$. Actually, for any $\veps$, we will find a symmetrizer for the regularized symbol $A_\veps(t,\xi)$,
defined by relation \eqref{eq:A_eps}, in order to deal with smooth functions, that we can differentiate in
time.

We point out here that we will work at any fixed $\xi\neq0$.

\medbreak
In view of what we said before, we define
\begin{equation} \label{def:S}
S_\veps(t,\xi)\,:=\,S^0_\veps(t,\xi)\,+\,|\xi|^{-1}\,S^1_\veps(t,\xi)\,,
\end{equation}
where $S^0_\veps$ and $S^1_\veps$ are two self-adjoint matrices we have to build up in a suitable way.
We point out here that the role of $S^1_\veps$ is to kill the bad terms coming from the time derivatives of the elements of $S^0_\veps$
in the energy estimates. On the other side, as the second term is of lower order, the time derivatives of $S^1_\veps$ will be easily
controlled in terms of the energy.

We start by proving the following statement.
\begin{lemma} \label{l:symm_01}
Assume the hypothesis of Theorem \ref{th:en_Z} (or Theorem \ref{th:en_LZ}), fix $\xi\neq0$ and define the approximate
matrix symbol $A_\veps$ by relation \eqref{eq:A_eps}.

There exist two families of $m\times m$ real-valued self-adjoint matrices, which are smooth with respect to $t$
and such that:
\begin{itemize}
\item $\bigl(S^0_\veps\bigr)_\veps$ is bounded in $B^1_{p,\infty}$ (or $B^{1-\log}_{p,\infty}$ respectively);
\item the $S^0_\veps$'s are uniformly positive definite: $S^0_\veps v\cdot v\geq C|v|^2$ for any $v\in\C^m$, for a constant $C>0$ depending just on the functional norms of the coefficients of operator $L$;
\item $\bigl(S^1_\veps\bigr)_\veps$ is bounded in $B^0_{p,\infty}$ (or $B^{0-\log}_{p,\infty}$ respectively).
\end{itemize}
Moreover, for any $\veps\in\,]0,1]$, the matrices $S^0_\veps$ and $S^1_\veps$ satisfy the relation
\begin{equation} \label{eq:symm_1}
\d_tS^0_\veps\what{u}\,\cdot\,\what{u}\,+\,2\,\Re\left(-i\,|\xi|^{-1}\,S^1_\veps A_\veps\what{u}\,\cdot\,\what{u}\right)\,=\,R_\veps\what{u}\,\cdot\,\what{u}\,,
\end{equation}
where the family of remainders $\bigl(R_\veps\bigr)_\veps$ is bounded in $L^\infty\bigl([0,T];\mc{M}_m(\R)\bigr)$.
\end{lemma}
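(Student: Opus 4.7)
The plan is to follow the construction outlined in the Introduction, applied to the approximated symbol $A_\veps$. Since hyperbolicity with constant multiplicities is preserved by the smoothing and the eigenprojectors $\Pi_{k,\veps}$ of $A_\veps$ are analytic functions of its matrix entries (see Remark \ref{r:smooth}), Proposition \ref{p:z-approx} combined with Corollary \ref{c:z-bound} and the composition results of Subsection \ref{ss:L-P} show that the $\Pi_{k,\veps}$'s inherit the Zygmund (respectively log-Zygmund) regularity of the $A_j$'s in $t$, uniformly in $\veps$. I look for $S^0_\veps$ and $S^1_\veps$ in the form
$$
S^0_\veps \;=\; \sum_{k=1}^m \Pi_{k,\veps}^*\,\Sigma^0_{k,\veps}\,\Pi_{k,\veps}\,,
\qquad
S^1_\veps \;=\; \sum_{j=1}^m \sum_{k\neq j} \Pi_{j,\veps}^*\,\Sigma^1_{jk,\veps}\,\Pi_{k,\veps}\,,
$$
where each $\Sigma^0_{k,\veps}$ is a self-adjoint endomorphism of the range of $\Pi_{k,\veps}$ and $(\Sigma^1_{jk,\veps})^*=\Sigma^1_{kj,\veps}$, which automatically makes both $S^0_\veps$ and $S^1_\veps$ self-adjoint.

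Exploiting self-adjointness of $S^1_\veps$, identity \eqref{eq:symm_1} rewrites equivalently as $\partial_t S^0_\veps + i|\xi|^{-1}(A_\veps^* S^1_\veps - S^1_\veps A_\veps) = R_\veps$. I then project this equation onto each ``block'' $\Pi_{j,\veps}^*(\,\cdot\,)\Pi_{l,\veps}$, using $\Pi_{k,\veps}A_\veps=\lambda_{k,\veps}\Pi_{k,\veps}$ and its adjoint, $\Pi_{k,\veps}^2=\Pi_{k,\veps}$, $\Pi_{j,\veps}\Pi_{k,\veps}=0$ for $j\neq k$, together with the relations they imply for $\partial_t\Pi_{k,\veps}$. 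For $j\neq l$ (off-diagonal blocks) this produces an algebraic equation of the form
$$
i|\xi|^{-1}(\lambda_{j,\veps}-\lambda_{l,\veps})\,\Pi_{j,\veps}^*\,\Sigma^1_{jl,\veps}\,\Pi_{l,\veps}\;=\;F_{jl,\veps}\,,
$$
where $F_{jl,\veps}$ is an explicit expression in $\Sigma^0_{j,\veps}$, $\Sigma^0_{l,\veps}$ and $\partial_t\Pi_{k,\veps}$. The scalar factor is homogeneous of degree $0$ in $\xi$ and, by constant multiplicity together with the preservation of eigenvalue ordering under convolution, is bounded from below uniformly in $(t,\xi)$ and $\veps$. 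Hence $\Sigma^1_{jl,\veps}$ can be solved for with no loss of regularity; since $\partial_t\Pi_{k,\veps}$ lies in $B^0_{p,\infty}$ (resp.\ $B^{0-\log}_{p,\infty}$) by Proposition \ref{p:ode} (i), the same is true of $S^1_\veps$ uniformly in $\veps$.

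For $j=l$ (diagonal blocks) the $S^1_\veps$-commutator contribution vanishes, and the identity reduces to an ODE for $\Sigma^0_{j,\veps}$ of schematic form
$$
\partial_t\Sigma^0_{j,\veps}\;=\;N_{j,\veps}\,\Sigma^0_{j,\veps}\,+\,\Sigma^0_{j,\veps}\,N_{j,\veps}^*\,+\,(\text{bounded})\,,
$$
with a coefficient $N_{j,\veps}$, built out of the projectors and their first time derivatives, uniformly bounded in $B^0_{p,\infty}$ (resp.\ $B^{0-\log}_{p,\infty}$). The main obstacle is precisely this step: the coefficient regularity is strictly below the Cauchy--Lipschitz threshold, so there is in general no exact ODE solution of the desired regularity. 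The remedy is Proposition \ref{p:ode_system}, applied componentwise in a fixed smooth basis of the range of $\Pi_{k,\veps}$: it yields an approximate solution $\Sigma^0_{j,\veps}\in B^1_{p,\infty}$ (resp.\ $B^{1-\log}_{p,\infty}$) with a smooth residual in $B^\infty_{p,\infty}\hra L^\infty$, and this residual is precisely what I collect into $R_\veps$. Prescribing $\Sigma^0_{j,\veps}$ at $t=0$ as the identity on the corresponding eigenspace and invoking a standard Gronwall argument (legitimate since $N_{j,\veps}\in L^p$ and the eigenvalue ordering is preserved) keeps $\Sigma^0_{j,\veps}$ uniformly positive definite on $[0,T]$, which gives the required lower bound for $S^0_\veps$. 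Throughout, the hypothesis $p>1$ is essential: it is what makes Corollary \ref{c:z-bound} embed the Zygmund classes into $L^\infty$, thereby legitimizing the pointwise manipulations of the eigenprojectors and of the inverse spectral-gap factors used above.
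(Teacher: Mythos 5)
Your decomposition and the overall strategy agree with the paper's: the paper also writes $S^0=Q^*\Sigma^0 Q$ and $S^1=Q^*\Sigma^1 Q$ (which, once you identify $\Pi_h$ with $R_hL_h$, is just a reparametrization of your projector form), also reduces \eqref{eq:symm_1} to the condition $G\approx0$ with $G$ given by \eqref{eq:G}, also gets a homogeneous linear ODE system for the diagonal blocks of $\Sigma^0$ solved approximately via Proposition~\ref{p:ode_system}, and also solves an algebraic equation driven by the spectral gap for the off-diagonal $\Sigma^1$. Your identification of where $p>1$ is needed (embedding into $L^\infty$, so that the products $\d_tQ\cdot P$ land in the right Besov class) is correct. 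So on structure and on all regularity counts you are aligned with the source.

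The one step that does not hold up as written is the positive-definiteness of $\Sigma^0_\veps$. You say that prescribing $\Sigma^0_\veps(0)=\Id$ and ``invoking a standard Gronwall argument'' keeps $\Sigma^0_\veps$ uniformly positive. Gronwall would indeed give a two-sided exponential bound for the \emph{exact} Carath\'eodory solution of the Lyapunov-type ODE $\d_t\Sigma^0=-(\Sigma^0\Theta+\Theta^*\Sigma^0)$, since that solution is of the form $\Phi^*\Sigma^0(0)\Phi$. But the object produced by Proposition~\ref{p:ode_system} is \emph{not} this solution: it is an approximate solution, built through the low-frequency-cut operator $J_\mu$, which solves the ODE only up to a residual in $B^\infty_{p,\infty}$. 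That residual is smooth, but it is not small, so it can in principle destroy positivity, and the difference between the approximate and the exact solution is not controlled by Gronwall alone. The paper avoids this in two ways: in the strictly hyperbolic case it sets $\sigma_j=e^{\omega_j}$, so each diagonal entry is a positive exponential by construction; in the constant-multiplicity case it uses Remark~\ref{r:ode_system}, which says precisely that for $\mu$ large the approximate solution $X^{(\mu)}$ stays within $C'2^{-\mu(1-s)}$ of its initial datum in $L^\infty$, hence within a small perturbation of $\Id$, which forces $\Sigma^0\geq(1-C'2^{-\mu(1-s)})\Id$. You need one of these two quantitative mechanisms; the Gronwall invocation by itself does not close the argument. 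A second, cosmetic remark: the display you write for the diagonal blocks, $\d_t\Sigma^0_j=N_j\Sigma^0_j+\Sigma^0_jN_j^*+(\text{bounded})$, conflates the homogeneous ODE one is trying to solve with the residual generated by the approximate solver; the underlying system \eqref{eq:g_jk-mult} is homogeneous, and the bounded term only appears \emph{a posteriori} and is exactly what feeds $R_\veps$.
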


\begin{proof}
For notation convenience, from now on we will drop out the dependence on $\veps$, even if we will
always work with smoothed matrices, vectors and coefficients.

Let us write $S^0$ and $S^1$ in the form (recall that we have defined $Q=P^{-1}$)
$$
S^0\,=\,Q^*\,\Sigma^0\,Q\qquad\mbox{ and }\qquad
S^1\,=\,Q^*\,\Sigma^1\,Q\,,
$$
where $\Sigma^0$ and $\Sigma^1$ are two suitable self-adjoint matrices, to be found.
In particular, we will construct them such that $\Sigma^0$ is real-valued,
while $\Sigma^1$ is pure imaginary.

Notice that, for any $v\in\C^m$,
\begin{eqnarray*}
2\Re\left(-i\,|\xi|^{-1}\,S^1 Av\,\cdot\,v\right) & = &
2\Re\left(-i\,|\xi|^{-1}\,Q^*\,\Sigma^1\,\Lambda\,Qv\,\cdot\,v\right) \\
& = & |\xi|^{-1}\left(-iQ^*\Sigma^1\Lambda Qv\cdot v\,+\,\oline{-iQ^*\Sigma^1\Lambda
Qv\cdot v}\right) \\
& = & i\,|\xi|^{-1}\left(-Q^*\Sigma^1\Lambda Qv\cdot v\,+\,Q^*\Lambda\Sigma^1 Qv\cdot v\right)\,,
\end{eqnarray*}
where, in the last step, we passed to the adjoint and we used the properties of the scalar product.

Using this last relation, we can rewrite the left-hand side of equation \eqref{eq:symm_1} in the form
$$
\d_tS^0\what{u}\,\cdot\,\what{u}\,+\,2\,\Re\left(-i\,|\xi|^{-1}\,S^1 A\what{u}\,\cdot\,\what{u}\right)\,=\,
Q^*\,G\,Q\,\what{u}\,\cdot\,\what{u}\,,
$$
where, setting $\Theta\,:=\,\d_tQ\,Q^{-1}\,=\,\d_tQ\,P$ and $[A,B]=AB-BA$ the commutator between two operators, we have defined
\begin{equation} \label{eq:G}
G\,:=\,\d_t\Sigma^0\,+\,\Sigma^0\,\Theta\,+\,\Theta^*\,\Sigma^0\,+\,i\,|\xi|^{-1}\,\left[\Lambda,\Sigma^1\right]\,.
\end{equation}

Therefore, we are going to construct $\Sigma^0$ and $\Sigma^1$ in order to satisfy the relation $G=0$ in an approximate
way.

\paragraph{Strictly hyperbolic case.}
Let us consider for a while the  strictly hyperbolic case, i.e. all the 
$\lambda_j$'s are distinct. In this case, we impose $\Sigma^0$ diagonal.

We will proceed in two steps: first of all, we will use $\Sigma^0$ to cancel out the diagonal terms 
of $G$, and then $\Sigma^1$ to put also the other elements to $0$.

Before going on, some notations are in order. We set
$$
\Sigma^0\,=\,{\rm diag}\bigl(\sigma_j\bigr)_{1\leq j\leq m}\qquad\mbox{ and }\qquad
\Sigma^1\,=\,\bigl(\wtilde{\sigma}_{jk}\bigr)_{1\leq j,k\leq m}\,;
$$
recall that, by our requirements, $\Sigma^1$ is null on the diagonal: $\wtilde{\sigma}_{jj}=0$. We denote also
$G\,=\,\bigl(g_{jk}\bigr)_{j,k}$ and $\Theta\,=\,\bigl(\theta_{jk}\bigr)_{j,k}$. Note that all these matrices, except $\Sigma^1$,
are real-valued.

\smallbreak
\textit{(i) $G\sim0$: diagonal elements}

Let us read equation \eqref{eq:G} on the diagonal terms: for any $1\leq j\leq m$ we find
$$
g_{jj}\,=\,\d_t\sigma_j\,+\,2\,\sigma_j\,\theta_{jj}\,=\,0\,.
$$
In order to solve this equation, we set $\sigma_j(t)=\exp\bigl(\omega_j(t)\bigr)$, with $\sigma_j(0)=1$: then,
recalling also the definition of $\Theta$, we are reconducted to the ODEs (for any $j$)
\begin{equation} \label{eq:ode_omega}
\d_t\omega_j\,=\,-2\,\theta_{jj}\,=\,-2\,\d_t\ell_j\,\cdot\,r_j\,=\,
-2\sum_{h=1}^m\d_tq_{jh}\,p_{hj}\,,
\end{equation}
with the initial condition $\omega_j(0)=0$.

As already pointed out, the $q_{jk}$'s and $p_{jk}$'s both belong to $B^1_{p,\infty}$ (under hypothesis \eqref{hyp:Z}) or
$B^{1-\log}_{p,\infty}$ (under hypothesis \eqref{hyp:LZ}).
So, thanks to the embedding $B^0_{p,\infty}\hra B^{-1/p}_{\infty,\infty}$ (or the analogous one in the logarithmic instance) and to
Theorems \ref{t:log-pp} and \ref{t:log-r} (recall also that $p>1$ by assumption), it's easy to see that the right-hand side of
\eqref{eq:ode_omega} belongs to $B^0_{p,\infty}$ (or $B^{0-\log}_{p,\infty}$ respectively).

Then, by Proposition \ref{p:ode}, we can find an approximate solution of \eqref{eq:ode_omega}: there exist functions
$\omega_j\in B^1_{p,\infty}$ (or $ B^{1-\log}_{p,\infty}$ rispectively) and $\rho_j\in B^\infty_{r,\infty}$ such that
$$
\d_t\omega_j\,+\,2\,\theta_{jj}\,=\,\rho_j\,.
$$
So, the matrix $\Sigma^0$ is determined. Notice that there exists a constant $K$ such that $\Sigma^0\,\geq\,K\Id$.

\smallbreak
\textit{(ii) $G\sim0$: terms out of the diagonal}

Let us now consider equation \eqref{eq:G} out of the diagonal. For any $j\neq k$ we easily get
\begin{equation} \label{eq:g_jk}
g_{jk}\,=\,\sigma_j\,\theta_{jk}\,+\,\sigma_k\,\theta_{kj}\,+\,i\,|\xi|^{-1}\,\left(\lambda_j\,-\,\lambda_k\right)\,\wtilde{\sigma}_{jk}\,.
\end{equation}

We want a more explicit formula. Differentiating relation \eqref{eq:l-eigen} with respect to time, we find
$$
^t\d_t\ell_j\,\left(A\,-\,\lambda_j\right)\,+\,^t\ell_j\,\left(\d_tA\,-\,\d_t\lambda_j\right)\,=\,0\,.
$$
We now evaluate it on the right eigenvector $r_k$, with $k\neq j$: keeping in mind \eqref{eq:l-r}, we arrive to the following expression:
\begin{equation} \label{eq:theta}
\left(\lambda_k\,-\,\lambda_j\right)\,\d_t\ell_j\,\cdot\,r_k\,+\,\ell_j\,\d_tA\,r_k\,=\,0\qquad\Longrightarrow\qquad
\theta_{jk}\,=\,\frac{1}{\lambda_j-\lambda_k}\,\ell_j\,\d_tA\,r_k\,.
\end{equation}
Putting this formula into \eqref{eq:g_jk}, one immediately gathers, for any $j\neq k$,
\begin{equation} \label{eq:Sigma_1}
\wtilde{\sigma}_{jk}\,=\,\frac{i\,|\xi|}{\left(\lambda_j-\lambda_k\right)^2}\,\bigl(\sigma_j\,\ell_j\,\d_tA\,r_k\,-\,
\sigma_k\,\ell_k\,\d_tA\,r_j\bigr)\,.
\end{equation}
Note that, by Remark \ref{r:homog}, each $\wtilde{\sigma}_{jk}$ is a homogeneous function of degree $0$ in $\xi$.

The symmetrizer is now completely determined in the case of strict hyperbolicity.

\paragraph{The case of multiplicities bigger than $1$.}
Recall that we denoted by $m_h$ the multiplicity of the eigenvalue $\lambda_h$: then,
the matrix $\Lambda$ is block diagonal, and each block is the $m_h\times m_h$ matrix $\lambda_h\Id$, i.e.
$$
\lambda_j\,=\,\lambda_h\qquad\qquad\mbox{ for all }\qquad j_h\,\leq\,j\,\leq\,j_h+m_h-1\,,
$$
where $j_h=\sum_{l=1}^{h-1}m_l$.

So, we will define $\Sigma^0$ to be zero out of the diagonal blocks: $\Sigma^0\,:=\,\bigl(\sigma_{jk}\bigr)_{j,k}$, with
$$
(j,k)\,\not\in\,[j_h,j_h+m_h-1]^2\quad\mbox{for all }\,h\qquad\Longrightarrow\qquad\sigma_{jk}\,=\,0\,.
$$
$\Sigma^1=\bigl(\wtilde{\sigma}_{jk}\bigr)_{j,k}$, instead, will be taken with all $0$'s in these blocks.

As done before, let us first annihilate the diagonal blocks of $G$ (up to a remainder). Take $j$: for some
$h$, $j\in[j_h,j_h+m_h-1]$. Then, for $k$ in the same interval, we will have
\begin{equation} \label{eq:g_jk-mult}
g_{jk}\,=\,\d_t\sigma_{jk}\,+\,\sum_{l=j_h}^{j_h+m_h-1}\bigl(\sigma_{jl}\,\theta_{lk}\,+\,\theta_{lj}\,\sigma_{kl}\bigr)\,.
\end{equation}
It's possible to see that the condition $g_{jk}=0$, for such a range of $j$ and $k$, is equivalent to a system
of first order ODEs
\begin{equation} \label{eq:ode_mult}
\d_tX\,=\,MX\,,
\end{equation}
where the vector $X$ contains the $\sigma_{jk}$'s, while the coefficients of the matrix $M$ are sums of elements of $\Theta$.
Hence, $M\in B^0_{p,\infty}$ (or $B^{0-\log}_{p,\infty}$ in the logarithmic instance). Then, applying Proposition \ref{p:ode_system},
we can solve \eqref{eq:ode_mult} with the initial condition $\sigma_{jk}(0)=\delta_{jk}$: there exist
$\bigl(\sigma_{jk}\bigr)_{jk}\,\subset\,B^{1}_{p,\infty}$ (or $B^{1-\log}_{p,\infty}$) and 
$\bigl(\rho_{jk}\bigr)_{jk}\,\subset\,B^\infty_{p,\infty}$ such that
$$
\d_t\sigma_{jk}\,+\,\sum_{l=j_h}^{j_h+m_h-1}\bigl(\sigma_{jl}\,\theta_{lk}\,+\,\theta_{lj}\,\sigma_{kl}\bigr)\,=\,\rho_{jk}\,.
$$

Notice that, by equation \eqref{eq:g_jk-mult}, it's easy to see that $\Sigma^0$ is symmetric. Moreover, by Remark \ref{r:ode_system},
we can solve \eqref{eq:ode_mult} for $\mu$ large enough, such that $\Sigma^0$ remains a strictly positive matrix
for all $(t,\xi)$: for any $v\in\C^m$,
$$
\Sigma^0v\,\cdot\,v\,\geq\,|v|^2\,-\,\left|\left(\Sigma^0-\Id\right)v\,\cdot\,v\right|\,\geq\,\left(1\,-\,C'\,2^{-\mu(1-s)}\right)|v|^2\,,
$$
where $1/p<s<1$ is the index we fixed in Proposition \ref{p:ode_system}.

Let us now work on the terms out of the diagonal blocks. Fix $j\in[j_h,j_h+m_h-1]$ and $k\in[j_{h'},j_{h'}+m_{h'}-1]$, with $h\neq h'$.
Then \eqref{eq:g_jk} becomes
$$
g_{jk}\,=\,\sum_{l=j_{h'}}^{j_{h'}+m_{h'}-1}\theta_{lj}\,\sigma_{lk}\,+\,\sum_{l=j_h}^{j_h+m_h-1}\sigma_{jl}\,\theta_{lk}\,+\,
i\,|\xi|^{-1} \wtilde{\sigma}_{jk}\,\bigl(\lambda_{h'}\,-\,\lambda_h\bigr)\,.
$$
If we replace now the values of $\theta_{jk}$ given by \eqref{eq:theta}, it's easy to see that a formula like \eqref{eq:Sigma_1}
still holds true. In particular, $\Sigma^1$ is self-adjoint and homogeneous of degree $0$ in $\xi\neq0$.

\medbreak
So, let us sum up what we have found.

Thanks to the previous computations, we constructed approximated matrices $S_\veps$ of the form \eqref{def:S}. Notice that
$S^0_\veps$ and $S^1_\veps$ fulfill relation \eqref{eq:symm_1}, with the remainder defined by the matrix
$R_\veps\,:=\,{\rm diag}\left(\rho_{j,\veps}\exp\bigl(\omega_{j,\veps}\bigr)\right)_{1\leq j\leq m}$ in the strictly
hyperbolic case, and $R_\veps\,:=\,\bigl(\rho_{jk,\veps}\bigr)_{j,k}$ in the one with constant multiplicities.

We point out also that the family $\left(S^0_\veps\right)_\veps$ is bounded in $B^1_{p,\infty}$
(or $B^{1-\log}_{p,\infty}$), $\left(S^1_\veps\right)_\veps$ is bounded in $B^0_{p,\infty}$ (or $B^{0-\log}_{p,\infty}$ respectively) and, thanks to Theorem \ref{t:comp} and embeddings,
$\left(R_\veps\right)_\veps$ is bounded in the space $L^\infty$.

Lemma \ref{l:symm_01} is completely proved.
\end{proof}

We now link the approximation parameter with the dual variable, following the original idea of paper \cite{C-DG-S}: we set
\begin{equation} \label{eq:approx-fourier}
\veps\,=\,|\xi|^{-1}\,.
\end{equation}
Note that, in this way, we will restrict to the case of high frequencies, more precisely to $|\xi|\geq1$. However, for low
frequencies it's easy to get the desired estimates (see the next subsection).

Then, the matrix symbol $S_{1/|\xi|}$, defined by \eqref{def:S},
is a \emph{microlocal symmetrizer} for the approximated system
$$
L_\veps u\,=\,\d_tu\,+\,\sum_{j=1}^nA_{j,\veps}(t)\,\d_ju\,.
$$
More precisely, we have the following proposition.
\begin{prop} \label{p:symm}
Let us define $S_\veps$ by equation \eqref{def:S}, with $S^0_\veps$ and $S^1_\veps$ given by Lemma \ref{l:symm_01}
and $\veps$ given by the choice \eqref{eq:approx-fourier}.

Then, $S_{1/|\xi|}$ enjoys the following properties:
\begin{itemize}
\item $S_{1/|\xi|}$ is self-adjoint;
\item for any $t$ and $|\xi|\geq R_0$ (for a $R_0>0$ just depending on the constants $K_z$ in \eqref{hyp:Z} and $K_{\ell z}$ in
\eqref{hyp:LZ} for the Zygmund and log-Zygmund instances respectively),
it's self-adjoint, uniformly bounded and (uniformly) positive definite:
there exist constants $0<K_1\leq K_2$ such that, for any $v\in\C^m$,
$$
K_1\,|v|^2\;\leq\;S_{1/|\xi|}v\,\cdot\,v\;\leq\;K_2\,|v|^2\,;
$$
\item for all $(t,\xi)$, the matrix $S^0_{1/|\xi|}(t,\xi)\,A_{1/|\xi|}(t,\xi)$ is self-adjoint.
\end{itemize}
\end{prop}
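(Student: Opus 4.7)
The plan is to verify the three claims in turn, using the construction of $S^0_\veps$ and $S^1_\veps$ from Lemma \ref{l:symm_01} combined with the choice \eqref{eq:approx-fourier}. The self-adjointness stated in the first bullet is immediate from the lemma, since $|\xi|^{-1}$ is a real scalar and both summands of $S_{1/|\xi|}$ are self-adjoint. Similarly, the third bullet should follow from a short algebraic manipulation: writing $A_\veps=P_\veps\Lambda_\veps Q_\veps$ with $Q_\veps=P_\veps^{-1}$ and recalling that $S^0_\veps=Q_\veps^*\Sigma^0_\veps Q_\veps$, one has
$$
S^0_\veps\,A_\veps\;=\;Q_\veps^*\,\Sigma^0_\veps\,Q_\veps\,P_\veps\,\Lambda_\veps\,Q_\veps\;=\;Q_\veps^*\,\Sigma^0_\veps\,\Lambda_\veps\,Q_\veps\,,
$$
and since by construction $\Sigma^0_\veps$ is block-diagonal on the eigenspaces of $\Lambda_\veps$ (on each of which $\Lambda_\veps$ is a scalar multiple of the identity), the matrices $\Sigma^0_\veps$ and $\Lambda_\veps$ commute; both being self-adjoint, their product is too, and so is $Q_\veps^*\,\Sigma^0_\veps\,\Lambda_\veps\,Q_\veps$.

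The core of the argument lies in the second bullet. The uniform lower bound $S^0_\veps v\cdot v\geq K|v|^2$ is already provided by Lemma \ref{l:symm_01}; the uniform upper bound on $S^0_{1/|\xi|}$ follows from the boundedness of $(S^0_\veps)_\veps$ in $B^1_{p,\infty}$ (respectively $B^{1-\log}_{p,\infty}$) together with the embedding $B^1_{p,\infty}\hra\mc{C}^{1-1/p}\hra L^\infty$ given by Proposition \ref{p:log-emb} (recall that $p>1$ and $d=1$). It then remains to show that the lower-order correction $|\xi|^{-1}S^1_{1/|\xi|}$ is negligible in operator norm as $|\xi|\to+\infty$.

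This is the delicate point. The bound $(S^1_\veps)_\veps\subset B^0_{p,\infty}$ from Lemma \ref{l:symm_01} does not directly yield an $L^\infty_t$ control; instead, I would go back to the explicit formula \eqref{eq:Sigma_1}, in which the entries of $\Sigma^1_\veps$ are a product of the factor $|\xi|/(\lambda_j-\lambda_k)^2$ (homogeneous of degree $-1$ in $\xi$ by Remark \ref{r:homog}, with the gap $|\lambda_j-\lambda_k|$ bounded away from zero on $|\xi|=1$ thanks to the constant multiplicity assumption) and of quantities of the form $\sigma_j\,\ell_j\cdot\d_t A_\veps\,r_k$. By Remark \ref{r:point}, applied entry by entry and using $\mc{Z}_p\hra\mc{LZ}_p\hra\mc{C}^\sigma$ for some $0<\sigma<1-1/p$, one obtains the pointwise bound $|\d_t A_\veps(t)|\leq C\,\veps^{-(1-\sigma)}$ with $C$ depending only on $K_z$ or $K_{\ell z}$. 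Substituting $\veps=|\xi|^{-1}$ and using the uniform boundedness of the $\sigma_j,\ell_j,r_k$ yields the pointwise estimate $|S^1_{1/|\xi|}|_{\mc{M}}\leq C\,|\xi|^{1-\sigma}$, whence
$$
|\xi|^{-1}\,\bigl|S^1_{1/|\xi|}\bigr|_{\mc{M}}\;\leq\;C\,|\xi|^{-\sigma}\,\xrightarrow[|\xi|\to+\infty]{}\,0\,.
$$
Choosing $R_0$ large enough that this is smaller than $K/2$ for $|\xi|\geq R_0$ then closes the bilateral estimate with $K_1=K/2$ and $K_2=K'+K/2$, where $K'$ is the $L^\infty$ upper bound on $S^0_{1/|\xi|}$.

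The main obstacle is thus the pointwise (in $t$) estimate on $S^1_{1/|\xi|}$: one needs to exit the $L^p$ setting in which the construction naturally lives and enter $L^\infty$, using the H\"older embedding of Zygmund classes in dimension one to gain a small positive exponent $\sigma$ which provides the decay in $|\xi|$. This is precisely the reason for the restriction $p>1$ in the hypothesis and for introducing a threshold $R_0$, below which the symmetrizer may fail to be positive.
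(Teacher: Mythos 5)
Your proposal is correct and follows essentially the same route as the paper: the paper's proof simply declares the proposition an immediate consequence of Lemma \ref{l:symm_01}, pointing out that Remark \ref{r:point} is what controls the time derivatives appearing in $S^1$ when establishing the lower bound, which is exactly the pointwise estimate $\bigl|\d_tA_\veps\bigr|\lesssim\veps^{-(1-\sigma)}$ (with $\veps=|\xi|^{-1}$, hence $|\xi|^{-1}\bigl|S^1_{1/|\xi|}\bigr|_{\mc{M}}\lesssim|\xi|^{-\sigma}$) that you use to absorb the lower-order term for $|\xi|\geq R_0$. Your explicit verification of the self-adjointness of $S^0_{1/|\xi|}A_{1/|\xi|}$ via the commutation of the block-diagonal $\Sigma^0$ with $\Lambda$, and of the $L^\infty$ bound on $S^0$ via the embedding of $\mc{Z}_p$ (resp.\ $\mc{LZ}_p$) into H\"older classes for $p>1$, just makes explicit what the paper leaves implicit in the construction.
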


\begin{proof}
The proposition is an immediate consequence of Lemma \ref{l:symm_01}.

Notice that, in the lower bound $K_1|v|^2\,\leq\,S_{1/|\xi|}v\cdot v$, we used Remark \ref{r:point} to control from below the time derivatives which appear in $S^1$.
\end{proof}

Notice that the present notion of microlocal symmetrizability differs from the one of Definition \ref{d:micro_symm}
in the regularity we require with respect to time (here, we don't have uniform Lipschitz continuity) and in the requirement that $SA$ is self-adjoint (this is true, in our case, just for the highest order part $S^0$).

\subsection{Energy estimates}

We are now ready to prove the energy estimates.

First of all, by Fourier transform we pass to the phase space, where system \eqref{def:Lu} reads
\begin{equation} \label{eq:fourier-syst}
\what{Lu}(t,\xi)\,=\,\d_t\what{u}(t,\xi)\,+\,i\,A(t,\xi)\cdot\what{u}(t,\xi)\,.
\end{equation}

So, we define the approximate energy in the Fourier variable:
\begin{equation} \label{def:en}
E_\veps(t,\xi)\,:=\,S_\veps(t,\xi)\what{u}(t,\xi)\,\cdot\,\what{u}(t,\xi)\,,
\end{equation}
where the approximated symmetrizer $S_\veps$ is given by \eqref{def:S}.

Recall that we have fixed $\veps=1/|\xi|$ in \eqref{eq:approx-fourier}. Nevertheless, for convenience we will keep, for the moment,
the notation with $\veps$.

Recall also that we will work with $|\xi|\geq R_0$. However, in the case $|\xi|\leq R_0$ energy estimates immediately follow:
it's enough to take the scalar product (in $\R^m$) of equation \eqref{eq:fourier-syst} by $\what{u}$, to use the bound \eqref{hyp:bound}
for the $A_j$'s (thanks to Corollary \ref{c:z-bound}), and to apply Gronwall's lemma after an integration in time.

So, let us come back to the energy $E_\veps$. Due to Proposition \ref{p:symm}, Corollary \ref{c:z-bound} and taking into account again Remark \ref{r:point}, it's easy to see that, for any $t$ and $|\xi|\geq R_0$,
$$
E_\veps(t,\xi)\,\sim\,|\what{u}(t,\xi)|^2\,.
$$

Now, we differentiate the energy with respect to time: using also the fact that $S_\veps$ is self-adjoint, it's easy to get the equality
$$
\d_tE_\veps\,=\,\d_tS_\veps\what{u}\,\cdot\,\what{u}\,+\,2\,\Re\left(S_\veps\d_t\what{u}\,\cdot\,\what{u}\right)\,.
$$
We then exploit equation \eqref{eq:fourier-syst} and definition \eqref{def:S}, and we arrive to
\begin{eqnarray*}
\d_tE_\veps & = & \d_tS^0_\veps\what{u}\,\cdot\,\what{u}\,+\,|\xi|^{-1}\,\d_tS^1_\veps\what{u}\,\cdot\,\what{u}\,+\,
2\,\Re\left(-\,i\,S_\veps(A-A_\veps)\what{u}\,\cdot\,\what{u}\right)\,+ \\
& & +\,2\,\Re\left(S_\veps\what{Lu}\,\cdot\,\what{u}\right)\,+\,
2\,\Re\left(-i\,S^0_\veps A_\veps\what{u}\,\cdot\,\what{u}\right)\,+\,
2\,\Re\left(-i\,|\xi|^{-1}\,S^1_\veps A_\veps\what{u}\,\cdot\,\what{u}\right)
\end{eqnarray*}
By construction, $S^0_\veps$ and $S^1_\veps$ satisfy relation \eqref{eq:symm_1}. Moreover,
$S^0_\veps\,A_\veps$ is self-adjoint; so, for any $v\in\C^m$, the
quantity $S^0_\veps A_\veps v\cdot v$ belongs to $\R$. Hence, the previous relation becomes
\begin{equation} \label{eq:E_t}
\d_tE_\veps\,=\,2\Re\left(S_\veps\what{Lu}\,\cdot\,\what{u}\right)\,+\,
2\Re\bigl(-\,i\,S_\veps(A-A_\veps)\what{u}\,\cdot\,\what{u}\bigr)\,+\,
R_\veps\,\what{u}\,\cdot\,\what{u}\,+\,
|\xi|^{-1}\d_tS^1_\veps\what{u}\,\cdot\,\what{u}\,.
\end{equation}

Let us deal with the first term. As $S_\veps$ is self-adjoint and positive definite, it defines a scalar product, for
which Cauchy-Schwarz inequality applies:
\begin{eqnarray*}
\left|2\,\Re\left(S_\veps\what{Lu}\,\cdot\,\what{u}\right)\right| & \leq & \left(S_\veps\what{Lu}\,\cdot\,\what{Lu}\right)^{1/2}\;
\left(S_\veps\what{u}\,\cdot\,\what{u}\right)^{1/2} \\
& \leq & C\,\left|\what{Lu}(t,\xi)\right|\,\bigl(E_\veps(t,\xi)\bigr)^{1/2}\,.
\end{eqnarray*}

Using again boundedness and positivity of $S_\veps$, the second term can be bounded as follows:
$$
\left|2\Re\left(-\,i\,S_\veps(A-A_\veps)\what{u}\,\cdot\,\what{u}\right)\right|\,\leq\,C\,\bigl|A-A_\veps\bigr|_{\mc{M}}\,E_\veps(t,\xi)\,.
$$
Recall that both $A$ and $A_\veps$ are homogeneous of degree $1$ in $\xi$.

As pointed out before, the coefficients of the third term are bounded, both under the Zygmund and the log-Zygmund hypothesis. Then we find
$$
\left|R_\veps\,\what{u}\,\cdot\,\what{u}\right|\,\leq\,C\,E_\veps(t,\xi)\,.
$$

Finally, for the last element of \eqref{eq:E_t} we just use the properties of $S_\veps$ to write
$$
\left|\d_tS^1_\veps\what{u}\,\cdot\,\what{u}\right|\,\leq\,C\,\left|\d_tS^1_\veps\right|_{\mc{M}}\,\left|\what{u}\right|^2\,\leq\,
C\,\left|\d_tS^1_\veps\right|_{\mc{M}}\,E_\veps\,.
$$

Therefore, by \eqref{eq:E_t} and the previous bounds we infer the inequality
\begin{equation} \label{est:d_tE}
\d_tE_\veps(t,\xi)\,\leq\,C\left(\left|\what{Lu}(t,\xi)\right|\,\bigl(E_\veps(t,\xi)\bigr)^{1/2}+
\left(1+|A-A_\veps|_{\mc{M}}+|\xi|^{-1}\left|\d_tS^1_\veps(t,\xi)\right|_{\mc{M}}\right)E_\veps(t,\xi)\right).
\end{equation}
Starting from this relation, if we define $e_\veps(t,\xi):=\bigl(E_\veps(t,\xi)\bigr)^{1/2}$ and
$$
\Phi(t,\xi)\,:=\,C\left(t+\int^t_0\left(|\xi|^{-1}\left|\d_tS^1_\veps(\tau,\xi)\right|_{\mc{M}}+
|A(\tau,\xi)-A_\veps(\tau,\xi)|_{\mc{M}}\right)d\tau\right),
$$
Gronwall's inequality immediately entails, for any $t\in[0,T]$,
\begin{equation} \label{est:en_part}
e_\veps(t,\xi)\,\leq\,e^{\int^t_0\Phi(\tau,\xi)d\tau}\,e_\veps(0,\xi)+\int^t_0e^{\int^t_\tau\Phi(s,\xi)ds}\,
\left|\what{Lu}(\tau,\xi)\right|\,d\tau\,.
\end{equation}

Now we set $\g=1/p'$ and we apply H\"older's inequality to the time integral in the exponential term: hence we find
\begin{eqnarray*}
|\xi|^{-1}\int^t_0\left|\d_tS^1_\veps(\tau,\xi)\right|_{\mc{M}}d\tau & \leq & C\,|\xi|^{-1}\,t^\g\,
\left\|\d_tS^1_\veps(\,\cdot\,,\xi)\right\|_{L^p([0,T];\mc{M}_m(\R))} \\
\int^t_0\left|A-A_\veps\right|_{\mc{M}}\,d\tau & \leq & C\,|\xi|\,t^\g\,
\sup_{1\leq j\leq n}\|A_j-A_{j,\veps}\|_{L^p([0,T];\mc{M}_m(\R))}\,.
\end{eqnarray*}
Recalling the definition of $S^1_\veps$, we can see that, in total, we have two time derivatives: they can act on two different terms,
or on the same. In any case, we can apply Proposition \ref{p:z-approx} (with $\veps=1/|\xi|$), which leads us to
$$
|\xi|^{-1}\int^t_0\left|\d_tS^1_\veps(\tau,\xi)\right|_{\mc{M}}d\tau\,\leq\,C\,t^\g\,
\log^\ell\left(1+|\xi|\right)\,,
$$
where, as usual, $\ell=0$ under the $\mc{Z}_p$ hypothesis and $\ell=1$ in the $\mc{LZ}_p$ case.
Again by Proposition \ref{p:z-approx}, we get also
$$
\int^t_0\left|A-A_\veps\right|_{\mc{M}}\,d\tau\,\leq\,C\,t^\g\,\log^\ell\left(1+|\xi|\right)\,.
$$

Putting this control into \eqref{est:en_part}, we finally get, for all $t\in[0,T]$ and all $|\xi|\geq1$,
$$
\left|\what{u}(t,\xi)\right|\,\leq\,C\,e^{C\,t}\left(|\xi|^{\ell\,\wtilde{\beta}\,t^\g}\,\left|\what{u}(0,\xi)\right|\,+\,
\int^t_0|\xi|^{\ell\,\wtilde{\beta}\,(t-\tau)^\g}\left|\what{Lu}(\tau,\xi)\right|\,d\tau\right)\,,
$$
for some suitable positive constants $C$ and $\wtilde{\beta}$. Taking the $L^2$ norm (or the $H^s$ norm, for any $s$) with respect to
$\xi$ completes the proof.

\begin{rem} \label{r:loss}
As noticed in Remark \ref{r:th_loss}, an inequality like \eqref{est:en_part} would be suitable for iterations in time. However,
we have to make the $L^p$ norm appear to control the behaviour of the coefficients, and this gives the strange factor $t^\g$.
Then, for log-Zygmund coefficients we cannot improve the inequality in Theorem \ref{th:en_LZ}.

Note also that this is not the case for Zygmund coefficients, or for $p=+\infty$.
\end{rem}

\begin{rem} \label{r:p=1}
We remarked several times the fact that $p$ has to be bigger than $1$. We used the condition $p>1$ in these occasions:
\begin{itemize}
 \item for the embeddings $\mc{Z}_p\,\hookrightarrow\,L^\infty$ and $\mc{LZ}_p\,\hookrightarrow\,L^\infty$, and actually in (pointwise)
H\"older classes (recall also Remark \ref{r:point});
\item to have $fg\,\in\,B^{0-\log}_{p,\infty}$ for $f\,\in\,B^{1-\log}_{p,\infty}$ and $g\,\in\,B^{0-\log}_{p,\infty}$:
this property is not true, in general, if $p=1$ (recall Theorems \ref{t:log-pp} and \ref{t:log-r}).
\end{itemize}
In particular, $p>1$ is fundamental to recover the equivalence between our energy and the classical one, and to construct
the symmetrizer solving the corresponding ODEs.
\end{rem}

\section{An application: the case of the wave equation} \label{s:wave}

As an application of the previous results, let us consider the case of the second order scalar equations, for simplicity in the case
of space dimension $n=1$.

We will also show that, in this instance, the restriction $p>1$ is not necessary: energy estimates, with or without loss, hold true also
for $p=1$. This is in accordance with the results in \cite{Tar}.

\medbreak
So, let $\alpha(t)\in L^\infty$, $0<\alpha_*\leq\alpha\leq\alpha^*$ and suppose that $\alpha\in B^1_{p,\infty}$ or $B^{1-\log}_{p,\infty}$,
for some $p\in[1,+\infty]$. We consider the wave operator
\begin{equation} \label{eq:wave}
Wu(t,x)\,:=\,\d_t^2u(t,x)\,-\,\alpha(t)\,\d^2_xu(t,x)\,.
\end{equation}

If we now set
$$
U(t,x)\,:=\,\left(\begin{array}{c}
-\d_xu \\[1ex] \d_tu
\end{array}\right)\,,\qquad
LU(t,x)\,:=\,\left(\begin{array}{c}
0 \\[1ex] Wu
\end{array}\right)\qquad\mbox{ and }\qquad
A(t)\,:=\,\left(\begin{array}{cc}
0 & 1 \\[1ex] \alpha(t) & 0
\end{array}\right)\,,
$$
then \eqref{eq:wave} is equivalent to the first order system
$$
LU(t,x)\,=\,\d_tU(t,x)\,+\,A(t)\,\d_xU(t,x)\,.
$$

For convenience, let us assume $\alpha$ to be smooth, and forget about the convolution and the approximation index $\veps$ in the notations.
We also set $\alpha=a^2$, with $a\geq a_*>0$.

An easy computation shows us that the eigenvalues and respective (normalized) eigenvectors of the matrix $A(t,\xi)=\xi A(t)$ are
$$
\lambda_{\pm}(t,\xi)\,:=\,\pm\,a(t)\,\xi\qquad\mbox{ and }\qquad
r_{\pm}(t,\xi)\,:=\,\bigl(1+a^2\bigr)^{-1/2}\,\bigl(1\,,\, \pm a\bigr)\,.
$$
Therefore, choosing $\lambda_+$ as the first eigenvalue, we find the matrices of change of basis:
$$
P\,:=\,\frac{1}{\sqrt{1+a^2}}\left(\begin{array}{cc}
1 & 1 \\[1ex] a(t) & -a(t)
\end{array}\right)\qquad\mbox{ and }\qquad
Q\,:=\,P^{-1}\,=\,\frac{\sqrt{1+a^2}}{2}\left(\begin{array}{cc}
1 & 1/a(t) \\[1ex] 1 & -1/a(t)
\end{array}\right)\,.
$$
We also set $\ell_{\pm}\,:=\,\bigl(\sqrt{1+a^2}/2\bigr)\,\bigl(1\,,\,\pm1/a\bigr)$. Note that, as already pointed out in the general
computations, they are indeed left eigenvectors of the matrix $A$.

We now perform straightforward computations, and we get
$$
\d_t\ell_\pm\,=\,\frac{a\,\d_ta}{2\,\sqrt{1+a^2}}\,\left(1\,,\,\pm\,\frac{1}{a}\right)\,+\,\frac{\sqrt{1+a^2}}{2}\,\left(0\,,\,\mp\,
\frac{\d_ta}{a^2}\right)\,.
$$

These expressions allow us to find the matrix $\Theta\,:=\,\d_tQ\,P$, and then $\Sigma^0$ and $\Sigma^1$. We start with the diagonal elements:
\begin{eqnarray*}
\theta_{11} & = & \d_t\ell_+\,\cdot\,r_+\;=\;\frac{a\,\d_ta}{1+a^2}\,-\,\frac{\d_ta}{2a} \\
& = & \frac{1}{2}\,\d_t\log\left(\frac{1+a^2}{a}\right) \\
\theta_{22} & = & \d_t\ell_-\,\cdot\,r_-\;=\;\theta_{11}\,.
\end{eqnarray*}
Thanks to these relations, the ODE \eqref{eq:ode_omega} can be explicitly solved in an exact way: for $j=1$ or $2$,
$$
\d_t\omega_j\,=\,-2\,\theta_{jj}\,=\,\d_t\log\left(\frac{a}{1+a^2}\right)\qquad\Longrightarrow\qquad
\omega_j\,=\,\log\left(\frac{a}{1+a^2}\right).
$$
Therefore, we find $\sigma_1=\sigma_2=a/(1+a^2)$ and
$$
\Sigma^0\,=\,\frac{a}{1+a^2}\,\Id\qquad\Longrightarrow\qquad
S^0\,=Q^*\,\Sigma^0\,Q\,=\,\frac{1}{2}\left(\begin{array}{cc}
a & 0 \\[1ex] 0 & 1/a(t)
\end{array}\right)\,.
$$
Notice that, thanks to the additional $L^\infty$ hypothesis, for any $p\in[1,+\infty]$ then $S^0$ is always well-defined and bounded,
and its elements have the same Zygmund regularity as $\alpha$.

Let us now construct the second part of the symmetrizer: due to the properties of $\Sigma^1$, it's enough to find $\wtilde{\sigma}_{12}$.
We use formula \eqref{eq:Sigma_1}: easy computations lead us to
$$
\ell_-\,\d_tA\,r_+\;=\;-\,\d_ta\,\xi \qquad\mbox{ and }\qquad
\ell_+\,\d_tA\,r_-\;=\;\d_ta\,\xi\,,
$$
which imply the following expression for $\wtilde{\sigma}_{12}$:
\begin{eqnarray*}
\wtilde{\sigma}_{12} & = & \frac{i\,\xi}{(\lambda_+\,-\,\lambda_-)^2}\,
\bigl(\sigma_+\,\ell_+\,\d_tA\,r_-\,-\,\sigma_2\,\ell_-\,\d_tA\,r_+\bigr) \\
& = & \frac{-i\,\d_ta}{2\,a\,(1+a^2)}\,.
\end{eqnarray*}
Finally, recalling that $\wtilde{\sigma}_{21}\,=\,\oline{\wtilde{\sigma}_{12}}$, it's immediate to get the matrix $S^1$:
$$
S^1\,=Q^*\,\Sigma^1\,Q\,=\,\frac{i\,\d_ta}{4\,a^2}\left(\begin{array}{cc}
0 & 1 \\[1ex] -1 & 0
\end{array}\right)\,.
$$

In the end, the energy associated to $u$ becomes
$$
E(t,\xi)\,=\,S\what{u}\,\cdot\,\what{u}\,=\,\frac{1}{2}\,\left(a(t)\,|\xi|^2\,|\what{u}|^2\,+\,\frac{1}{a(t)}\,|\d_t\what{u}|^2\,+\,
\frac{\d_ta}{2\,a^2}\,\Re\left(\d_t\what{u}\,\cdot\,\what{u}\right)\right)\,,
$$
which slightly differs from the one used by Tarama in \cite{Tar}.
From now on, the computations can be performed as in the general case, explained in the previous section, and they allow us to find the same results of \cite{Tar}.

We stress again the fact that the previous construction can be performed for any $p\in[1,+\infty]$.

\appendix

\section{Appendix -- Proof of Theorem \ref{t:comp}} \label{app:alg_top}

We show here the proof of Theorem \ref{t:comp}. We will follow the main steps of Proposition 4 in \cite{D2010},
based on Meyer's paralinearization method, performing suitable modifications, in order to adapt the arguments to the logarithmic instance.

First of all, we introduce the telescopic series
$$
\sum_{j=2}^{+\infty}F_j\,,\qquad\qquad\mbox{ with }\qquad F_j\,:=\,F(S_{j+1}u)\,-\,F(S_ju)\,,
$$
where we adopted the same notations of Subsection \ref{ss:L-P}.

\begin{lemma} \label{l:telescopic}
 Under the hypotheses of Theorem \ref{t:comp}, the series $\sum_jF_j$ converges to $F(u)\,-\,F(S_2u)$ in $\mc{S}'$. Moreover
one has
\begin{equation} \label{eq:telescopic}
F_j\,=\,m_j\,\Delta_ju\,,\qquad\qquad\mbox{ where }\qquad m_j\,:=\,\int^1_0F'(S_ju\,+\,\sigma\,\Delta_ju)\,d\sigma\,.
\end{equation}
\end{lemma}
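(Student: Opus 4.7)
The plan is to dispatch the algebraic identity $F_j = m_j\,\Delta_j u$ as a direct consequence of the fundamental theorem of calculus, and then handle the $\mathcal{S}'$-convergence of the telescoping series via an approximation argument based on the regularizing properties of $S_N$.

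First, I would note that the Littlewood-Paley construction gives $S_{j+1} = S_j + \Delta_j$, so that
\[
F_j \,=\, F(S_j u + \Delta_j u) \,-\, F(S_j u)\,.
\]
Provided the quantity $S_j u(x) + \sigma\,\Delta_j u(x)$ stays inside the open interval $I$ for every $\sigma \in [0,1]$ and a.e.\ $x$ --- which follows from the hypothesis that $u$ is supported in the compact subset $J \subset I$, upon extending $F$ smoothly and boundedly to all of $\R$ (this modification is harmless since only the values of $F$ on a fixed neighborhood of $J$ enter the estimates of Theorem \ref{t:comp}) --- a direct application of the fundamental theorem to the $\mathcal{C}^1$ map $\sigma \mapsto F(S_j u + \sigma\,\Delta_j u)$ yields
\[
F_j(x) \,=\, \Delta_j u(x)\,\int_0^1 F'\bigl(S_j u(x)\,+\,\sigma\,\Delta_j u(x)\bigr)\,d\sigma\,=\,m_j(x)\,\Delta_j u(x)\,,
\]
which is \eqref{eq:telescopic}.

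For the convergence, the telescoping identity
\[
\sum_{j=2}^N F_j \;=\; F(S_{N+1}u) \,-\, F(S_2 u)
\]
reduces the problem to establishing $F(S_{N+1}u) \to F(u)$ in $\mathcal{S}'$ as $N \to +\infty$. Since $u$ takes values in the compact set $J$, it belongs to $L^\infty$, and by standard properties of the low-frequency cut-off $S_{N+1} = \chi(2^{-(N+1)}D)$ the family $(S_{N+1}u)_N$ is uniformly bounded in $L^\infty$ and converges to $u$ almost everywhere. As $F$ (after the smooth extension above) is continuous and bounded on $\R$, dominated convergence yields $F(S_{N+1}u) \to F(u)$ in $L^q_{\mathrm{loc}}$ for any $q<\infty$, and hence \emph{a fortiori} in $\mathcal{S}'$.

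The only real obstacle is the bookkeeping issue of ensuring $F$ is applied only to arguments where it is defined, which is handled by the smooth extension of $F$ outside a small compact neighborhood of $J$. No delicate analytic estimates are required at this stage: those will enter the proof of Theorem \ref{t:comp} only later, when the representation $F(u) = F(S_2 u) + \sum_j m_j \Delta_j u$ is combined with the paraproduct and remainder bounds of Theorems \ref{t:log-pp} and \ref{t:log-r}, together with suitable control of the multipliers $m_j$.
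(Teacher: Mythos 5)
Your proof is correct, but the convergence argument takes a genuinely different route from the paper's. For the identity \eqref{eq:telescopic} the two arguments coincide: a one-line application of the fundamental theorem of calculus (the paper calls it the mean value theorem). For the convergence, you reduce via telescoping to $F(S_{N+1}u)\rightarrow F(u)$ in $\mc{S}'$ and conclude softly, using the uniform $L^\infty$ bound on $S_{N+1}u$, almost everywhere convergence of the approximate identity, and dominated convergence; this needs only $u\in L^\infty$ and the boundedness of the (extended) $F$, and never touches the hypothesis $\nabla u\in B^{(s-1)+\alpha\log}_{p,r}$. The paper instead estimates $\left\|F(u)-F(S_{N+1}u)\right\|_{L^p}\leq\left\|F'\right\|_{L^\infty}\left\|u-S_{N+1}u\right\|_{L^p}$ and shows that the tail $\sum_{j\geq N+1}\|\Delta_ju\|_{L^p}$ vanishes by Bernstein's inequality combined with the Besov regularity of $\nabla u$ (with separate treatment of $r=+\infty$ and of $s=0$, $\alpha>1$); this is quantitative and gives the stronger $L^p$ convergence, although only $\mc{S}'$ convergence is stated in the lemma and needed later, since the Besov summation is performed afterwards through Lemma \ref{l:deriv}. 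Two small points to tighten in your version: first, the values of $S_ju+\sigma\,\Delta_ju$ need not lie in $I$ even when $u$ takes values in $J$, because $S_j$ and $\Delta_j$ are not positivity preserving, so the right formulation is simply that the smooth bounded extension of $F$ to all of $\R$ (which the paper also uses implicitly through Lemma \ref{l:m_j}, where $g$ is defined on $\R^2$) makes $F_j$ and $m_j$ well defined while leaving $F(u)$ unchanged; second, convergence in $L^q_{\mathrm{loc}}$ alone does not imply convergence in $\mc{S}'$, so in the final step you should use the uniform $L^\infty$ bound on $F(S_{N+1}u)$ and apply dominated convergence directly against a Schwartz test function.
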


\begin{proof}
 Equality \eqref{eq:telescopic} is a straightforward consequence of the mean value theorem. So, it's enough to prove the convergence
 of the series.
 
Hence, for any fixed $N\geq2$, let us estimate
$$
 \left\|F(u)\,-\,F(S_2u)\,-\,\sum_{j=2}^NF_j\right\|_{L^p}\,=\,\left\|F(u)\,-\,F(S_{N+1}u)\right\|_{L^p}\,\leq\,
\left\|u\,-\,S_{N+1}u\right\|_{L^p}\,\left\|F'\right\|_{L^\infty}\,.
$$

Let us suppose $s>0$ and $r<+\infty$ for a while. Notice that, as $\nabla u\in B^{(s-1)+\alpha\log}_{p,r}$,
\begin{equation} \label{eq:limit}
\lim_{N\rightarrow+\infty}\sum_{j\geq N}2^{j(s-1)r}\,(1+j)^{\alpha r}\,\|\Delta_j\nabla u\|^r_{L^p}\,=\,0\,.
\end{equation}

Thanks to spectral localization, we have $u-S_{N+1}u\,=\,\sum_{j\geq N+1}\Delta_ju$; then, by Bernstein inequalities we infer
\begin{eqnarray}
\left\|u\,-\,S_{N+1}u\right\|_{L^p} & \leq & \sum_{j\geq N+1}\left\|\Delta_ju\right\|_{L^p} \label{est:L^p} \\
& \leq & C\,\sum_{j\geq N+1} 2^{j(s-1)}\,(1+j)^\alpha\,\left\|\Delta_j\nabla u\right\|_{L^p}\,2^{-js}\,(1+j)^{-\alpha}\,. \nonumber
\end{eqnarray}
We now apply H\"older inequality for series, and relation \eqref{eq:limit} allows us to conclude.

If $r=+\infty$, instead, we apply Proposition \ref{p:log-emb} to reconduct ourselves to the previous case
with a different $s'>0$. If $s=0$, instead, we use the  fact that $\alpha>1$ in  \eqref{est:L^p}.
\end{proof}

\begin{rem} \label{r:telescopic}
Starting from inequality \eqref{est:L^p}, it's easy to see that the previous statement is still true even if $s=0$ and $\alpha>0$,
under the additional assumption that $\alpha r'\,>\,1$, with $r'$ defined by the relation $1/r'\,+\,1/r\,=\,1$.
\end{rem}

Let us now quote Lemma 2.63 of \cite{B-C-D}.
\begin{lemma} \label{l:m_j}
 Let $g:\,\R^2\,\longrightarrow\,\R$ smooth, and set $m_j(g)\,:=\,g(S_ju\,,\,\Delta_ju)$ for all $j\in\N$.
 
 Then, for any $u\in L^\infty$ and any $\nu\in\N^d$, there exists a positive constant $C_\nu=C_\nu(g,\|u\|_{L^\infty})$
such that, for any $j\in\N$,
$$
\left\|m_j(g)\right\|_{L^\infty}\,\leq\,C_\nu\,2^{j|\nu|}\,.
$$
\end{lemma}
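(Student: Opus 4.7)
The plan is to read the statement as a bound on the derivatives $\d^\nu m_j(g)$ of order $|\nu|$: as written, the right-hand side $2^{j|\nu|}$ would otherwise carry no information (the constant on the right depends on $\nu$, whereas the left-hand side does not). Once this interpretation is fixed, the proof is a direct consequence of Fa\`a di Bruno's formula together with Bernstein's inequalities.

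The first step is to observe that, since $u\in L^\infty$ and the cut-off operators $S_j$ and $\Delta_j$ are convolutions against $L^1$-kernels with norm uniformly bounded in $j$, one has
$$
\|S_ju\|_{L^\infty}\,+\,\|\Delta_ju\|_{L^\infty}\,\leq\,C\,\|u\|_{L^\infty}
$$
for all $j\in\N$. Hence the pair $\bigl(S_ju(x),\Delta_ju(x)\bigr)$ stays in a fixed compact set $K\subset\R^2$, independent of $j$ and of $x$. On $K$, all partial derivatives of the smooth function $g$ are controlled by constants depending only on their order and on $\|u\|_{L^\infty}$.

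Next, the spectral supports of $S_ju$ (in a ball of radius of order $2^j$) and of $\Delta_ju$ (in a ring of size comparable to $2^j$) allow to apply Bernstein's inequalities (Lemma \ref{l:bern}): for every multi-index $\mu\in\N^d$,
$$
\|\d^\mu S_ju\|_{L^\infty}\,+\,\|\d^\mu\Delta_ju\|_{L^\infty}\,\leq\,C_\mu\,2^{j|\mu|}\,\|u\|_{L^\infty}\,.
$$
Then I would apply Fa\`a di Bruno's formula to expand $\d^\nu m_j(g)\,=\,\d^\nu\bigl[g(S_ju,\Delta_ju)\bigr]$ as a finite sum of terms of the form
$$
\bigl(\d^a_1\d^b_2 g\bigr)(S_ju,\Delta_ju)\,\prod_{i=1}^{a}\d^{\mu_i}S_ju\;\cdot\;\prod_{k=1}^{b}\d^{\eta_k}\Delta_ju\,,
$$
with $\sum_i|\mu_i|\,+\,\sum_k|\eta_k|\,=\,|\nu|$. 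Combining the two previous bounds, each such term is estimated in $L^\infty$ by $C_\nu\,2^{j|\nu|}\,\|u\|_{L^\infty}^{a+b}$; summing over the finitely many terms in the Fa\`a di Bruno expansion yields the announced inequality, with $C_\nu$ depending only on $g$, $\nu$ and $\|u\|_{L^\infty}$.

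The only subtle point is the correct interpretation of the statement; no serious technical obstacle arises, as everything rests on the chain rule producing factors whose derivative orders sum to $|\nu|$, and on each derivative of a spectrally-localized function costing a single factor $2^j$ in $L^\infty$.
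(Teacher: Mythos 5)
Your reading of the statement---namely that it contains a typo and the left-hand side should be $\left\|\d^\nu m_j(g)\right\|_{L^\infty}$---is correct: this is how the lemma is actually applied in the proof of Theorem \ref{t:comp} in the Appendix, and it matches the statement of Lemma 2.63 in \cite{B-C-D}, which the paper merely quotes without proof. Your argument (Fa\`a di Bruno's formula combined with Bernstein's inequalities, after observing that the pair $(S_ju,\Delta_ju)$ ranges in a fixed compact subset of $\R^2$ so that all derivatives of $g$ are uniformly controlled there) is precisely the standard proof of that result; since the paper supplies no proof of its own, there is nothing to compare against beyond the cited reference, with which your approach coincides.
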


Finally, we need the following lemma. Recall that, for $s\geq0$, $[s]$ denotes the biggest integer smaller than or equal to $s$.
\begin{lemma} \label{l:deriv}
 Let $s>0$, $\alpha\in\R$ and $(p,r)\in[1,+\infty]^2$.
 
 There exists a positive constant $C_{s,\alpha}$ such that, for any sequence $\left(u_j\right)_{j\in\N}$ of smooth functions which satisfy
$$
\left(\sup_{|\nu|\leq[s]+1}\left(2^{j(s-|\nu|)}\,(1+j)^{\alpha}\,\left\|\d^\nu u_j\right\|_{L^p}\right)\right)_j\,\in\,\ell^r(\N)\,,
$$
then $u\,:=\,\sum_ju_j$ belongs to $B^{s+\alpha\log}_{p,r}$ and
$$
\|u\|_{B^{s+\alpha\log}_{p,r}}\,\leq\,C_{s,\alpha}\,
\left\|\left(\sup_{|\nu|\leq[s]+1}\left(2^{j(s-|\nu|)}\,(1+j)^{\alpha}\,\left\|\d^\nu u_j\right\|_{L^p}\right)\right)_j\right\|_{\ell^r}\,.
$$

If $s=0$, $\alpha>0$ and $r=1$ one can just infer
$$
\|u\|_{B^{0+\alpha\log}_{p,\infty}}\,\leq\,C_{\alpha}\,
\left\|\left(\sup_{|\nu|\leq1}\left(2^{-j|\nu|}\,(1+j)^{\alpha}\,\left\|\d^\nu u_j\right\|_{L^p}\right)\right)_j\right\|_{\ell^1}\,.
$$
\end{lemma}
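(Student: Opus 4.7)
The plan is to estimate $\|\Delta_k u\|_{L^p}\,\leq\,\sum_{j\in\N}\|\Delta_ku_j\|_{L^p}$ by using two complementary Bernstein-type bounds on each term, chosen according to whether $j\leq k$ or $j>k$. Since the $u_j$'s carry no spectral localization, I cannot invoke Lemma~\ref{l:log-B_ind} or Lemma~\ref{l:log-ball}; instead I would first record the two facts: (a) $\|\Delta_ku_j\|_{L^p}\,\leq\,C\,\|u_j\|_{L^p}$ always, and (b) for $k\geq 1$, factoring $\vphi(2^{-k}\xi)\,=\,2^{-kN}\,|\xi|^N\,\psi(2^{-k}\xi)$ with $\psi\in\mc{S}$, one gets $\|\Delta_ku_j\|_{L^p}\,\leq\,C\,2^{-kN}\,\|\nabla^Nu_j\|_{L^p}$ for any fixed $N$. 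I will apply (b) with $N=[s]+1$ in the regime $j\leq k$ and (a) in the regime $j>k$.

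Next I would introduce the quantities
$$
a_j\,:=\,2^{j(s-[s]-1)}\,(1+j)^\alpha\,\|\nabla^{[s]+1}u_j\|_{L^p}\,,\qquad
b_j\,:=\,2^{js}\,(1+j)^\alpha\,\|u_j\|_{L^p}\,,
$$
both of which are dominated, termwise, by the sup appearing in the hypothesis, hence belong to $\ell^r(\N)$. Multiplying by the weight $2^{ks}(1+k)^\alpha$ and separating the sum into $j\leq k$ and $j>k$, one is led to two expressions of the form
$$
\sum_{j\leq k}2^{(j-k)([s]+1-s)}\,\frac{(1+k)^\alpha}{(1+j)^\alpha}\,a_j\qquad\mbox{and}\qquad
\sum_{j>k}2^{(k-j)s}\,\frac{(1+k)^\alpha}{(1+j)^\alpha}\,b_j\,.
$$
Using the elementary inequality \eqref{est:frac_int}, each ratio $(1+k)^\alpha/(1+j)^\alpha$ is absorbed into a polynomial factor $(1+|k-j|)^{|\alpha|}$. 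Because $[s]+1-s>0$ and $s>0$, both surviving kernels take the form (geometric decay)$\times$(polynomial), hence belong to $\ell^1(\N)$, so Young's convolution inequality in $\ell^r$ yields the desired bound $\|u\|_{B^{s+\alpha\log}_{p,r}}\,\leq\,C_{s,\alpha}\,\|(c_j)_j\|_{\ell^r}$, where $c_j$ denotes the sup over $|\nu|\leq[s]+1$.

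For the endpoint case $s=0$, $\alpha>0$, $r=1$, the low-index part goes through as before since the exponent $[s]+1-s=1$ remains positive. The high-index part loses its geometric decay, so I would instead use the monotonicity $(1+k)^\alpha\leq(1+j)^\alpha$ for $k\leq j$ (valid because $\alpha>0$) to bound
$$
\sup_{k\in\N}(1+k)^\alpha\sum_{j>k}\|u_j\|_{L^p}\;\leq\;\sum_{j\in\N}(1+j)^\alpha\,\|u_j\|_{L^p}\,,
$$
which is the $\ell^1$ norm appearing in the hypothesis; this only controls the $\ell^\infty$-norm in $k$, in agreement with the weaker $B^{0+\alpha\log}_{p,\infty}$ conclusion. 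The main technical obstacle, in both cases, is the careful accounting of the logarithmic prefactor: once it is absorbed into the polynomial $(1+|k-j|)^{|\alpha|}$, the estimates reduce to standard convolution arguments in weighted $\ell^r$.
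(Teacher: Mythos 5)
Your argument is correct and is essentially the paper's own proof: the same splitting of $\sum_j\|\Delta_k u_j\|_{L^p}$ according to the relative size of the indices, with plain $L^p$-boundedness of $\Delta_k$ on one side and the Bernstein-type gain $2^{-k([s]+1)}$ (at the cost of $[s]+1$ derivatives on $u_j$) on the other, the ratio of logarithmic weights absorbed via \eqref{est:frac_int}, and the conclusion by Young's inequality for convolutions of sequences; the endpoint $s=0$, $\alpha>0$, $r=1$ is treated exactly as in the paper, using $(1+k)^\alpha\leq(1+j)^\alpha$ to retain only an $\ell^\infty$ bound. The only cosmetic differences are the swapped roles of the indices and your derivation of the Bernstein bound by factoring the symbol $\vphi(2^{-k}\xi)$ instead of quoting Lemma \ref{l:bern} applied to $\Delta_k u_j$ (just note the single term $j=k=0$, where the annulus argument is unavailable, is trivially handled by the $L^p$ bound).
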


\begin{proof}
 For any $j\in\N$, we have to estimate
$$
2^{js}\,(1+j)^\alpha\,\|\Delta_ju\|_{L^p}\,\leq\,2^{js}\,(1+j)^\alpha\left(\biggl\|\sum_{k<j}\Delta_ju_k\biggr\|_{L^p}\,+\,
\biggl\|\sum_{k\geq j}\Delta_ju_k\biggr\|_{L^p}\right)\,.
$$

Let us focus on the second term: as $\|\Delta_ju_k\|_{L^p}\,\leq\,C\,\|u_k\|_{L^p}$, we have
\begin{eqnarray}
2^{js}\,(1+j)^\alpha\,\biggl\|\sum_{k\geq j}\Delta_ju_k\biggr\|_{L^p} & \leq & C\,\sum_{k\geq j}2^{s(j-k)}\,
\left(\frac{1+j}{1+k}\right)^\alpha\,(1+k)^\alpha\,2^{ks}\,\|u_k\|_{L^p} \label{est:sum_1} \\
& \leq & C\,\sum_{k\geq j}2^{-s(k-j)}\,(1+|k-j|)^{|\alpha|}\,\delta_k\,, \nonumber
\end{eqnarray}
where we have denoted $\delta_k\,:=\,\sup_{|\nu|\leq[s]+1}\left((1+k)^\alpha\,2^{k(s-|\nu|)}\,\|\d^\nu u_k\|_{L^p}\right)$.

On the other hand, for $k<j$ (and then $j\geq1$), from Bernstein inequalities we infer
$$
\|\Delta_ju_k\|_{L^p}\,\leq\,C\,2^{-j([s]+1)}\,\sup_{|\nu|=[s]+1}\left\|\d^\nu u_k\right\|_{L^p}\,.
$$
Therefore, we can write
\begin{equation} \label{est:sum_2}
2^{js}\,(1+j)^\alpha\,\biggl\|\sum_{k<j}\Delta_ju_k\biggr\|_{L^p}\,\leq\,C\,\sum_{k<j}2^{(k-j)([s]+1-s)}\,
\left(\frac{1+j}{1+k}\right)^\alpha\,\delta_k\,.
\end{equation}

Putting together estimates \eqref{est:sum_1} and \eqref{est:sum_2}, we end up with the inequality
$$
2^{js}\,(1+j)^\alpha\,\|\Delta_ju\|_{L^p}\,\leq\,C\,\left(a\,*\,\delta\right)_j\,,
$$
where we have defined $\delta\,=\,\left(\delta_k\right)_k$ and $a=\left(a_k\right)_k$, with
$$
a_k\,=\,(1+k)^{|\alpha|}\,\left(2^{-ks}\,+\,2^{-k([s]+1-s)}\right)\,.
$$
This concludes the proof of the lemma when $s>0$.

If $s=0$, it's enough to notice that, in \eqref{est:sum_1}, as $k\geq j$,
$$
\left(\frac{1+j}{1+k}\right)^\alpha\,\leq\,C\,,
$$
while the term $2^{-x}(1+x)^\alpha$, arising in \eqref{est:sum_2}, can be obviously bounded by a constant.
\end{proof}

Let us come back to the proof of Theorem \ref{t:comp}. By Lemma \ref{l:telescopic}, we have the decomposition
$$
F(u)\,-F(S_2u)\,=\,\sum_{j=2}^{+\infty}F_j\,,
$$
where $F_j$ is given by \eqref{eq:telescopic}.

\begin{rem} \label{r:u-S_0u}
Due to spectral localization, in every term $\Delta_ju$ (with $j\geq2$) of the relations in \eqref{eq:telescopic},
we can replace $u$ by $u-S_0u$, with
$$
\left\|u\,-\,S_0u\right\|_{B^{s+\alpha\log}_{p,r}}\,\leq\,C\,\left\|\nabla u\right\|_{B^{(s-1)+\alpha\log}_{p,r}}\,.
$$
\end{rem}

As a first step, we want to prove that $F(u)-F(S_2u)\,\in\,B^{s+\alpha\log}_{p,r}$:
thanks to Lemma \ref{l:deriv}, it's enough to prove that
$$
\left(\sup_{|\nu|\leq[s]+1}\left(2^{j(s-|\nu|)}\,(1+j)^\alpha\,\left\|\d^\nu F_j\right\|_{L^p}\right)\right)_j\,\in\,
\ell^r(\N)\,.
$$
If we set $g(\zeta,\omega)\,=\,\int^1_0F'(\zeta+\sigma\omega)d\sigma$, by Leibniz formula and Lemma \ref{l:m_j} we infer
\begin{eqnarray*}
\left\|\d^\nu F_j\right\|_{L^p} & \leq & \sum_{\mu\leq\nu}C_{\nu,\mu}\,2^{j|\mu|}\,C_\mu(F',J)\,2^{j(|\nu|-|\mu|)}\,\|\Delta_ju\|_{L^p} \\
& \leq & C_\nu(F',J)\,2^{j|\nu|}\,\|\Delta_ju\|_{L^p}\,.
\end{eqnarray*}
Hence, from this inequality and Remark \ref{r:u-S_0u}, we get
$$
2^{j(s-|\nu|)}\,(1+j)^\alpha\,\left\|\d^\nu F_j\right\|_{L^p}\,\leq\,C_\nu(F',J)\,c_j\,\|\nabla u\|_{B^{(s-1)+\alpha\log}_{p,r}}\,,
$$
where $\left\|c_j\right\|_{\ell^r}\,=\,1$.

So, we have proved that $F(u)-F(S_2u)\,\in\,B^{s+\alpha\log}_{p,r}$, with
$$
\left\|F(u)\,-\,F(S_2u)\right\|_{B^{s+\alpha\log}_{p,r}}\,\leq\,C\,\left\|\nabla u\right\|_{B^{(s-1)+\alpha\log}_{p,r}}\,.
$$
This implies, in particular, that its gradient is in $B^{(s-1)+\alpha\log}_{p,r}$.

Now we notice that
$$
\nabla\bigl(F(S_2u)\bigr)\,=\,F'(S_2u)\,\nabla S_2u
$$
belongs to $L^p$ with all its derivatives. In fact, this easily follows from the chain rule and Leibniz formula, keeping in mind that
$u\in L^\infty$ and $\nabla S_2u\in L^p$ with all its derivatives.

From this fact, by embeddings we gather that $\nabla\bigl(F(S_2u)\bigr)\in B^{(s-1)+\alpha\log}_{p,r}$, and then also
$\nabla(F\circ u)$ belongs to the same space.

This completes the proof of Theorem \ref{t:comp}.

\subsubsection*{Acknowledgements}

The first three authors are members of the Gruppo Nazionale per l'Analisi Matematica, la Probabilit\`a
e le loro Applicazioni (GNAMPA) of the Istituto Nazionale di Alta Matematica (INdAM).

The third author was partially supported by 
the project ``Instabilities in Hydrodynamics'', funded by the Paris city hall (program ``\'Emergences'') and the
Fondation Sciences Math\'ematiques de Paris.
He is deeply grateful to J. Rauch for useful discussions about the problem.

{\small

 }


\begin{thebibliography}{xxx}


\bibitem{B-C_1994} H. Bahouri, J.-Y. Chemin:
{\it \'Equations de transport relatives \`a des champs de vecteurs non-lipschitziens et m\'ecanique des fluides},
Arch. Rational Mech. Anal., {\bf 127} (1994), n. 2, 159-181.

 \bibitem{B-C-D} H. Bahouri, J.-Y. Chemin, R. Danchin: 
``Fourier Analysis and Nonlinear Partial Differential Equations'', 
Grundlehren der Mathematischen Wissenschaften (Fundamental Principles of Mathematical Sciences),
{\bf 343}, Springer, Heidelberg (2011).

\bibitem{Bony} J.-M. Bony:
{\it Calcul symbolique et propagation des singularit\'es pour les
\'equations aux d\'eriv\'ees partielles non lin\'eaires}, Ann. Sci. \'Ecole Norm. Sup. (4), {\bf 14} (1981), 209-246.

\bibitem{Ch1995} J.-Y. Chemin:
{\it Fluides parfaits incompressibles},
Ast\'erisque, {\bf 230} (1995).

\bibitem{Cic-C} M. Cicognani, F. Colombini:
{\it Modulus of continuity of the coefficients and loss of derivatives in the strictly hyperbolic Cauchy problem},
J. Differential Equations, {\bf 221} (2006), 143-157.

\bibitem{C-DG-S} F. Colombini, E. De Giorgi, S. Spagnolo:
{\it Sur les \'equations hyperboliques avec des coefficients qui ne d\'ependent que du temps},
Ann. Scuola Normale Sup. Pisa Cl. Scienze (4), {\bf 6} (1979), no. 3, 511-559.

\bibitem{C-DS} F. Colombini, D. Del Santo:
{\it A note on hyperbolic operators with log-Zygmund coefficients},
J. Math. Sci. Univ. Tokyo {\bf 16} (2009), no. 1, 95-111.

\bibitem{C-DS-F-M_tl} F. Colombini, D. Del Santo, F. Fanelli, G. M\'etivier:
{\it Time-dependent loss of derivatives for hyperbolic operators with non-regular coefficients},
Comm. Partial Differential Equations, {\bf 38} (2013), n. 10, 1791-1817.

\bibitem{C-DS-F-M_wp} F. Colombini, D. Del Santo, F. Fanelli, G. M\'etivier:
{\it A well-posedness result for hyperbolic operators with Zygmund coefficients},
J. Math. Pures Appl. (9), {\bf 100} (2013), n. 4, 455-475. 

\bibitem{C-DS-R} F. Colombini, D. Del Santo, M. Reissig:
{\it On the optimal regularity of coefficients in hyperbolic Cauchy problems},
Bull. Sci. Math., {\bf 127} (2003), n. 4, 328-347.

\bibitem{C-M} F. Colombini, G. M\'etivier,
{\it The Cauchy problem for wave equations with non-Lipschitz coefficients; application to continuation of solutions of
some nonlinear wave equations},
Ann. Sci. \'Ecole Norm. Sup. (4) {\bf 41} (2008), n. 2, 177-220.

\bibitem{D_2005} R. Danchin:
{\it Estimates in Besov spaces for transport and transport-diffusion equations with almost Lipschitz coefficients},
Rev. Mat. Iberoamericana, {\bf 21} (2005), n. 3, 863-888.

\bibitem{D2010} R. Danchin:
{\it On the well-posedness of the incompressible density-dependent
Euler equations in the $L^p$ framework}, 
J. Differential Equations, {\bf 248} (2010), n. 8, 2130-2170.

\bibitem{D-P} R. Danchin, M. Paicu:
{\it Les th\'eor\`emes de Leray et de Fujita-Kato pour le syst\`eme de Boussinesq partiellement visqueux},
Bull. Soc. Math. France, {\bf 136} (2008), n. 2, 261-309.

\bibitem{E} L. C. Evans:
``Partial Differential Equations'',
Graduate Studies in Mathematics, {\bf 19}, American Mathematical Society, Providence, RI (1998).

\bibitem{F_phd} F. Fanelli:
``Mathematical analysis of models of non-homogeneous fluids and of hyperbolic operators with low-regularity coefficients'',
Ph.D. thesis, Scuola Internazionale Superiore di Studi Avanzati \& Universit\'e Paris-Est (2012).

\bibitem{F-Z} F. Fanelli, E. Zuazua:
{\it Weak observability estimates for $1$-D wave equations with rough coefficients},
Ann. Inst. H. Poincar\'e Anal. Non Lin\'eaire, to appear (2014).

\bibitem{H-S} A. E. Hurd, D. H. Sattinger:
{\it Questions of existence and uniqueness for hyperbolic equations with discontinuous coefficients},
Trans. Amer. Math. Soc., {\bf 132} (1968), 159-174.

\bibitem{K} T. Kato:
``Perturbation Theory for Linear Operators'',
Classics in Mathematics, Springer-Verlag, Berlin (1995).

\bibitem{M-1986} G. M\'etivier:
{\it Interactions de deux chocs pour un syst\`eme de deux lois de conservation, en dimension deux d'espace},
Trans. Amer. Math. Soc., {\bf 296} (1986), 431-479.

\bibitem{M-2008} G. M\'etivier:
``Para-differential calculus and applications to the Cauchy problem for nonlinear systems'',
Centro di Ricerca Matematica ``Ennio De Giorgi'' (CRM) Series, {\bf 5}, Edizioni della Normale, Pisa (2008).

\bibitem{M-Z} G. M\'etivier, K. Zumbrun:
{\it Large viscous boundary layers for noncharacteristic nonlinear hyperbolic problems},
Mem. Amer. Math. Soc., {\bf 175} (2005).

\bibitem{Rauch} J. Rauch:
``Hyperbolic Partial Differential Equations and Geometric Optics'',
Graduate Studies in Mathematics, {\bf 133}, American Mathematical Society, Providence, RI (2012).

\bibitem{Tar} S. Tarama:
{\it Energy estimate for wave equations with coefficients in some Besov type class},
Electron. J. Differential Equations (2007), Paper No. 85, 12 pp. (electronic).


\end{thebibliography}
\end{document}